\numberwithin{equation}{section}
\DeclareMathOperator{\R}{\mathbb{R}} 
\newcommand{\p}{\mathbb{P}} 
\newcommand{\E}{\mathbb{E}} 
\DeclareMathOperator{\Var}{Var} 
\DeclareMathOperator{\Cov}{Cov} 
\DeclareMathOperator{\Bin}{Bin} 
\DeclareMathOperator*{\argmax}{arg\,max} 
\DeclareMathOperator*{\argmin}{arg\,min} 
\newcommand{\wh}{\widehat}  
\newcommand{\wt}{\widetilde} 
\newcommand{\SBM}{\mathrm{SBM}} 
\newcommand{\MAP}{\mathrm{MAP}} 
\newcommand{\overlap}{\mathsf{ov}}
\newcommand{\cut}[1]{}
\def\cA{{\mathcal A}}
\def\cC{{\mathcal C}}
\def\cE{{\mathcal E}}
\def\cF{{\mathcal F}}
\def\cS{{\mathcal S}}
\def\cT{{\mathcal T}}
\newtheorem{theorem}{Theorem}[section]
\newtheorem{lemma}[theorem]{Lemma}
\newtheorem{proposition}[theorem]{Proposition}
\newtheorem{conjecture}[theorem]{Conjecture}
\newtheorem{definition}[theorem]{Definition}
\newtheorem{remark}[theorem]{Remark}
\newtheorem{objective}{Objective}
\begin{document}

\title{Correlated Stochastic Block Models: Exact Graph Matching\\ with Applications to Recovering Communities}
\author{
	Mikl\'os Z.\ R\'acz
	\thanks{Princeton University; \url{mracz@princeton.edu}. Research supported in part by NSF grant DMS 1811724.} 
	\and
	Anirudh Sridhar
	\thanks{Princeton University; \url{anirudhs@princeton.edu}. Research supported in part by NSF grant DMS 1811724.}
}
\date{\today}

\maketitle


\begin{abstract}
We consider the task of learning latent community structure from multiple correlated networks. First, we study the problem of learning the latent vertex correspondence between two edge-correlated stochastic block models, focusing on the regime where the average degree is logarithmic in the number of vertices. We derive the precise information-theoretic threshold for exact recovery: above the threshold there exists an estimator that outputs the true correspondence with probability close to 1, while below it no estimator can recover the true correspondence with probability bounded away from~0. As an application of our results, we show how one can exactly recover the latent communities using \emph{multiple} correlated graphs in parameter regimes where it is information-theoretically impossible to do so using just a single graph. 
\end{abstract}


\section{Introduction} \label{sec:intro} 

Learning community structure in networks is a ubiquitous inference task in several domains, including biology \cite{chen2006detecting, Marcotte751}, sociology \cite{fortunato2010community}, and machine learning \cite{sahebi2011community, linden2003amazon, wu2015clustering}. Recent decades have therefore seen an explosion of work on the topic, leading to 
determining the fundamental information-theoretic limits 
for learning communities in probabilistic generative models~\cite{Abbe_survey,abbe2016exact,abbe2015community,mossel2016consistency,mossel2014reconstruction,massoulie2014community,mossel2018proof}, as well as algorithms that work well in practice~\cite{ruan2013efficient, Karypis98afast, dhillon2004kernelkm}. Typically, such algorithms only leverage the structure of the network (i.e., the configuration of node-node links). 
Increasingly, 
one often has access to {\it side information} that can greatly improve the performance of inference algorithms. 

There is a vast literature on designing algorithms that incorporate various types of side information to aid in recovering communities in networks. The works \cite{deshpande2018contextual,lu2020contextual,mossel2016local, kanade2016global,binkiewicz2017covariate,zhang2016community,bothorel2015clustering,saad2020sideinfo, ma2021community} leverage {\it node-level} information that is correlated with community memberships; 
here the 
sharp limits 
for community detection were conjectured by  Deshpande~et~al.~\cite{deshpande2018contextual} 
and recently proven by Lu and Sen~\cite{lu2020contextual}. Another line of work \cite{han2015consistent,arroyo2020inference, paul2020null, paul2020spectral, lei2019consistent, ali2019latent, bhattacharya2020consistent, mayya2019mutual, ma2021community} 
recovers communities from a \emph{multi-layer} network, 
where the different layers are conditionally independent given the same community structure. 
Recently Ma and Nandy~\cite{ma2021community} synthesized these two strands of literature. 

In contrast to prior work, we explore scenarios where the side information comes in the form of {\it multiple correlated networks}, which is natural in several domains including social networks~\cite{narayanan2009anonymizing, pedarsani2011privacy, korula2014efficient}, computational biology~\cite{singh2008global}, and machine learning~\cite{cour2007balanced,conte2004thirty}. In the context of social networks, for instance, many datasets are anonymized to protect the identity of users. Nevertheless, one may be able to infer additional information about users from additional networks by noting that the interaction patterns of the same set of users are likely to be similar across networks~\cite{narayanan2009anonymizing, pedarsani2011privacy,korula2014efficient}. In computational biology, an important goal is to study the functional properties of protein groups through a protein-protein interaction (PPI) network. Using the insight that functionally similar protein groups will have similar interaction structures, one can compare PPIs across species to infer protein functions \cite{singh2008global}. In all of these examples, an important task, commonly known as {\it graph matching}, is to synthesize the information from multiple correlated networks in a sensible manner.

To the best of our knowledge, we are the first to consider the use of multiple correlated networks 
for recovering communities. Specifically, we quantify, in an information-theoretic sense, how much information we can gain from correlated networks in order to infer community structure. To this end, we focus on correlated graphs $G_1$ and $G_2$ drawn marginally according to the stochastic block model (SBM), which is widely recognized as the canonical probabilistic generative model for networks with community structure~\cite{HLL83,Abbe_survey}. The reason for studying this probabilistic model is twofold. For one, it serves as a prototypical model for networks with community structure found in practice, hence the algorithms we develop will serve as a starting point for applications. Moreover, the SBM has well-defined ground-truth communities, so we can concretely study the correctness of algorithms in terms of whether the communities they output align with the ground truth.

\subsection{Models and Questions}
\label{sec:models}

\textbf{The stochastic block model (SBM).} 
The SBM is perhaps the simplest and most well-known probabilistic generative model for networks with community structure. It was initially proposed by Holland, Laskey, and Leinhardt~\cite{HLL83} and subsequently used as a theoretical testbed for evaluating clustering algorithms on average-case networks (see, e.g.,~\cite{dyer1989solution, bui1984graph, bopanna1987eigenvalues}). 
A striking fact about the SBM is that it exhibits sharp information-theoretic phase transitions for various inference tasks, leading to a \emph{precise} understanding of when community information can be extracted from network data. 
Such phase transitions were first conjectured by Decelle~et~al.~\cite{DKMZ11} and were subsequently proven rigorously by several authors~\cite{mossel2014reconstruction,massoulie2014community,mossel2018proof, abbe2016exact,mossel2016consistency,abbe2015community,bordenave2015nonbacktracking, Abbe_survey}. In summary, the SBM is a well-motivated and mathematically rich setting for studying inference tasks. 

In this work we focus on the simplest setting, a SBM with two symmetric communities. 
For a positive integer $n$ and $p,q \in [0,1]$, we construct $G \sim \mathrm{SBM}(n,p,q)$ as follows. 
The graph $G$ has $n$ vertices, 
labeled by the elements of $[n] : = \{1, \ldots, n \}$. 
Each vertex $i \in [n]$ has a community label $\sigma_{i} \in \{+1,-1\}$; 
these are drawn i.i.d.\ uniformly at random across all $i \in [n]$. 
Let $\boldsymbol{\sigma} : = \{ \sigma_i \}_{i = 1}^n$ be the vector of community labels, 
with the two communities given by the sets 
$V_+ : = \{ i \in [n]: \sigma_i = + 1 \}$ and 
$V_- : = \{ i \in [n] : \sigma_i = -1 \}$. 
Then, given the community labels $\boldsymbol{\sigma}$, 
the edges of $G$ are drawn independently across vertex pairs as follows. 
For distinct $i,j \in [n]$, 
if $\sigma_i \sigma_j = 1$, 
then the edge $(i,j)$ is in $G$ with probability $p$;  
else $(i,j)$ is in $G$ with probability $q$. 

\textbf{Community recovery.} 
Generally speaking, a community recovery algorithm takes as input $G$ (without knowledge of the community labels $\boldsymbol{\sigma}$) and outputs a community labeling $\wh{\boldsymbol{\sigma}}$. The {\it overlap} between the estimated labeling and the ground truth is given by 
\[
\overlap(\wh{\boldsymbol{\sigma}}, \boldsymbol{\sigma}) : = \frac{1}{n} \left|  \sum\limits_{i = 1}^n \wh{\boldsymbol{\sigma}}_i \boldsymbol{\sigma}_i  \right|.
\]
In the formula for the overlap, we take an absolute value since the labelings $\boldsymbol{\sigma}$ and $- \boldsymbol{\sigma}$ specify the same community partition (and it is only possible to recover $\boldsymbol{\sigma}$ up to its sign). Moreover, notice that $\overlap(\wh{\boldsymbol{\sigma}}, \boldsymbol{\sigma})$ is always between 0 and 1, where a larger value corresponds to a better match between the estimated communities and the ground truth. Indeed, the algorithm succeeds in exactly recovering the communities (i.e., $\wh{\boldsymbol{\sigma}} = \boldsymbol{\sigma}$ or $\wh{\boldsymbol{\sigma}} = - \boldsymbol{\sigma}$) if and only if $\overlap(\wh{\boldsymbol{\sigma}}, \boldsymbol{\sigma}) = 1$.

In the logarithmic degree regime---that is, when $p = \alpha \log (n) / n$ and $q = \beta \log (n) / n$ for some fixed constants $\alpha , \beta \ge 0$---it is well-known that there is a sharp information-theoretic threshold for exactly recovering communities in the SBM~\cite{abbe2016exact, mossel2016consistency,abbe2015community,Abbe_survey}. Specifically, if 
\begin{equation}
\label{eq:community_achievability_threshold}
\left|\sqrt{\alpha} - \sqrt{\beta} \right| > \sqrt{2},
\end{equation}
then exact recovery is possible: there is a polynomial-time algorithm which outputs an estimator~$\wh{\boldsymbol{\sigma}}$ satisfying $\lim_{n \to \infty} \p( \overlap ( \wh{\boldsymbol{\sigma}}, \boldsymbol{\sigma}) = 1) = 1$. On the other hand, if 
\begin{equation}
\label{eq:community_impossibility_threshold}
\left| \sqrt{\alpha} - \sqrt{\beta} \right| < \sqrt{2},
\end{equation}
then exact recovery is impossible: for \emph{any} estimator $\widetilde{\boldsymbol{\sigma}}$, 
we have that 
$\lim_{n \to \infty} \p (\overlap(\widetilde{\boldsymbol{\sigma}}, \boldsymbol{\sigma}) = 1) = 0$.

\textbf{Correlated SBMs.} 
The goal of our work is to understand how side information in the form of multiple {\it correlated} SBMs affects the threshold given by~\eqref{eq:community_achievability_threshold} and~\eqref{eq:community_impossibility_threshold}. 
To construct a pair of {correlated} SBMs, we define an additional parameter $s \in [0,1]$ which controls the level of correlation between the two graphs. Formally, we construct $(G_1, G_2) \sim \mathrm{CSBM}(n,p,q,s)$ as follows. 
First, generate a parent graph $G \sim \SBM(n,p,q)$, 
and let $\boldsymbol{\sigma}$ denote the community labels. 
Next, given~$G$, we construct $G_{1}$ and $G_{2}'$ by independent subsampling: 
each edge of $G$ is included in $G_{1}$ with probability $s$, independently of everything else, and non-edges of $G$ remain non-edges in $G_{1}$; 
we obtain $G_{2}'$ independently in the same fashion. 
Note that $G_{1}$ and $G_{2}'$ inherit the vertex labels from the parent graph $G$, 
and the community labels are given by $\boldsymbol{\sigma}$ in both graphs. 
Finally, we let $\pi_{*}$ be a uniformly random permutation of $[n]$, independently of everything else, 
and generate $G_{2}$ by relabeling the vertices of $G_{2}'$ according to $\pi_{*}$ 
(e.g., vertex $i$ in $G_{2}'$ is relabeled to $\pi_{*}(i)$ in $G_{2}$). 
This last step in the construction of $G_2$ reflects the observation that in applications, node labels are often obscured. This construction is visualized in Figure~\ref{fig:correlated_sbm}.

\begin{figure}[t]
    \centering
    \includegraphics[width=0.95\textwidth]{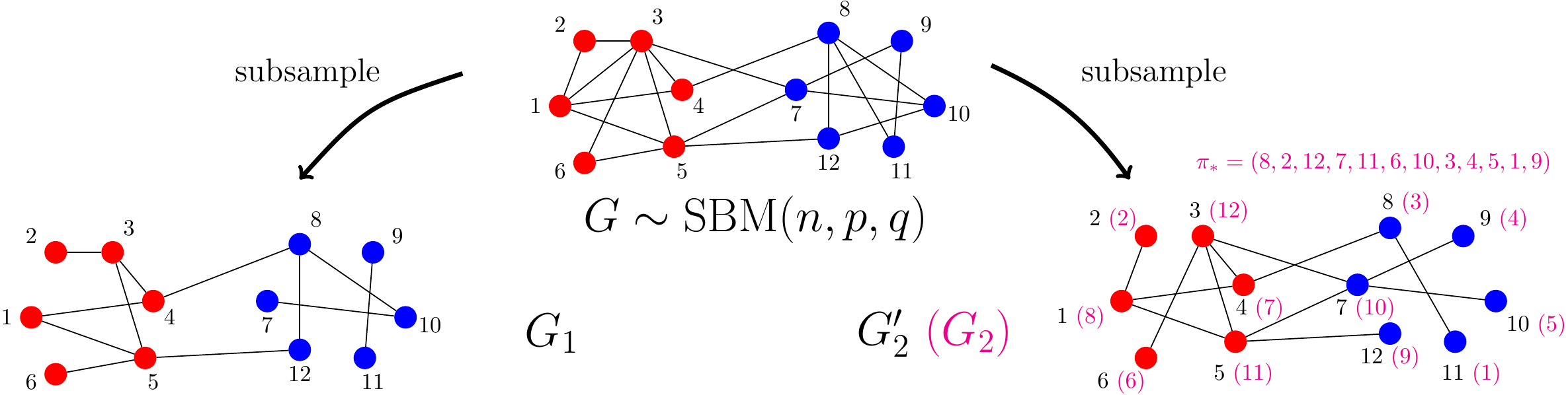}
    \caption{Schematic showing the construction of correlated SBMs (see text for details).}
    \label{fig:correlated_sbm}
\end{figure}

This model of correlated SBMs was first studied by Onaran, Erkip, and Garg~\cite{onaran2016optimal}. This process of generating correlated graphs (i.e., by first generating a parent graph, independently subsampling it, and randomly permuting the labels) is a natural and common approach for inducing correlation in the formation of edges, and has been employed to study correlated graphs from the Erd\H{o}s-R\'{e}nyi model (see, e.g., \cite{pedarsani2011privacy}, as well as further references in Section \ref{sec:related_work}), the Chung-Lu model~\cite{yu2021power}, and the preferential attachment model \cite{korula2014efficient}.

An important property of the construction is that marginally $G_{1}$ and $G_{2}$ are both SBMs. Specifically, since the subsampling probability is $s$, we have that 
$G_{1} \sim \SBM(n,ps,qs)$. 
In the logarithmic degree regime, where $p = \alpha \log(n)/n$ and $q = \beta \log(n) / n$, \eqref{eq:community_achievability_threshold} implies that the communities can be exactly recovered from $G_1$ 
{\it alone} if 
\begin{equation}
\label{eq:community_recovery_achievability_single}
\left| \sqrt{\alpha} - \sqrt{\beta} \right| > \sqrt{\frac{2}{s}}.
\end{equation}
A central question of our work is how one can utilize the side information in $G_2$ to go {\it beyond} the single-graph threshold \eqref{eq:community_recovery_achievability_single}. This is formalized as follows. 

\begin{objective}[Exact community recovery]
\label{q:community_recovery}
Given $(G_1, G_2) \sim \mathrm{CSBM}\left( n, \frac{\alpha \log n}{n},\frac{\beta \log n }{n}, s \right)$, 
determine conditions on $\alpha$, $\beta$, and $s$ so that there exists an estimator $\wh{\boldsymbol{\sigma}} = \wh{\boldsymbol{\sigma}}(G_1, G_2)$ satisfying 
\[
\lim_{n \to \infty} \p( \overlap(\wh{\boldsymbol{\sigma}}, \boldsymbol{\sigma}) = 1) = 1.
\]
\end{objective}

A key observation is that if the latent correspondence $\pi_*$ is known, then one can readily improve the achievability region in~\eqref{eq:community_recovery_achievability_single}. Indeed, if $\pi_*$ is known, then one can reconstruct $G_2'$ from $G_2$. We can then construct a new graph $H_*$ by ``overlaying" $G_1$ and $G_2'$ (i.e., taking their union). Formally, $(i,j)$ is an edge in $H_*$ if and only if $(i,j)$ is an edge in $G_1$ or $G_2'$. An equivalent interpretation is that $(i,j)$ is an edge in the parent graph $G$ and it is included in either $G_1$ or $G_2'$ in the subsampling process. The probability that the edge is {\it not} included in either $G_1$ or $G_2'$ is $(1 - s)^2$, so 
it follows that 
$H_* \sim \mathrm{SBM} \left( n, \alpha (1 - (1 - s)^2) \log(n)/n, \beta (1 - (1 - s)^2 ) \log(n)/n \right)$. 
By~\eqref{eq:community_achievability_threshold} it thus follows that exact community recovery is possible if
\begin{equation}
\label{eq:threshold_union_graph}
\left| \sqrt{\alpha } - \sqrt{\beta} \right| > \sqrt{ \frac{2}{1 - (1 - s)^2} }.
\end{equation}
Since $1 - (1- s)^2 > s$ for $s \in (0,1)$, \eqref{eq:threshold_union_graph} is a strict improvement over \eqref{eq:community_recovery_achievability_single}. Remarkably, this implies that if $\pi_*$ is known and if
\[
\sqrt{\frac{2}{s}} 
> \left| \sqrt{\alpha} - \sqrt{\beta} \right| 
> \sqrt{ \frac{2}{1 - (1 - s)^2}},
\]
then it is information-theoretically impossible to exactly recover $\boldsymbol{\sigma}$ from $G_1$ (or $G_2$) alone, but one can recover $\boldsymbol{\sigma}$ exactly by combining information from $G_1$ {\it and} $G_2$. To make this 
rigorous, we study when it is possible to exactly recover $\pi_*$ from $G_1$ and $G_2$. This task is known as {\it graph matching}. 

\begin{objective}[Exact graph matching]
\label{q:graph_matching}
Given 
$(G_1, G_2) \sim \mathrm{CSBM} \left( n, \frac{\alpha \log n}{n}, \frac{\beta \log n}{n}, s \right)$, 
determine conditions on $\alpha$, $\beta$, and $s$ so that there exists an estimator $\wh{\pi} = \wh{\pi}(G_1, G_2)$ satisfying 
\[
\lim_{n \to \infty} \p( \wh{\pi} = \pi_*) = 1.
\]
\end{objective}

While we have motivated graph matching as an intermediate step in recovering communities, it is an important problem in its own right, with applications to data privacy in social networks~\cite{narayanan2009anonymizing, pedarsani2011privacy}, protein-protein interaction networks~\cite{singh2008global}, and machine learning~\cite{cour2007balanced, conte2004thirty}, among others. 
In particular, 
it is well known that graph matching algorithms can be used to de-anonymize social networks~\cite{narayanan2009anonymizing}, showing that anonymity is not the same as privacy. 
Studying the fundamental limits of when graph matching is possible can serve to highlight the precise conditions when anonymity can indeed guarantee privacy, and when additional safeguards are necessary. 

Although Objective \ref{q:graph_matching} has not been studied previously, 
there is strong evidence 
of 
a phase transition for exact recovery of $\pi_*$ in the logarithmic degree regime. 
In the special case of correlated Erd\H{o}s-R\'{e}nyi graphs---that is, when $\alpha = \beta$---Cullina and Kiyavash~\cite{cullina2016improved,cullina2018exact} showed 
that the maximum likelihood estimate exactly recovers $\pi_*$ with probability tending to 1 if 
$s^2 \alpha > 1$. 
When $\alpha \neq \beta$, and 
assuming that the community labels are \emph{known} in both graphs, 
Onaran, Garg, and Erkip~\cite{onaran2016optimal} showed that exact recovery of $\pi_*$ is possible if 
$s ( 1 - \sqrt{1 - s^2} ) 
\left( \alpha + \beta \right) / 2
> 3$. 
Cullina~et~al.~\cite{cullina2016simultaneous}, 
also assuming that community labels are known in both graphs, 
stated (without proof) that exact recovery is possible if 
$s^{2} (\alpha + \beta) / 2 > 2$. 
Since these works assume knowledge of community labels, 
it is unclear if these conditions allow to recover $\pi_{*}$ based on knowledge of only $G_{1}$ and $G_{2}$. 
Nevertheless, they suggest that exact graph matching may be possible in the logarithmic degree regime. 

Turning to impossibility results, 
in correlated Erd\H{o}s-R\'{e}nyi graphs, if $s^2 \alpha < 1$, then there is no estimator which exactly recovers $\pi_*$ with probability bounded away from zero~\cite{cullina2016improved,cullina2018exact,wu2021settling}.~For correlated SBMs, Cullina~et~al.~\cite{cullina2016simultaneous} showed that one cannot exactly recover $\pi_{*}$ when $s^{2} (\alpha + \beta)/2 < 1$.

In particular, for correlated Erd\H{o}s-R\'{e}nyi graphs the information-theoretic threshold $s^2 \alpha = 1$ is the {\it connectivity threshold} for the intersection graph 
of $G_1$ and $G_2'$. 
(Given two graphs $H_1$ and~$H_2$, the edge $(i,j)$ is in the intersection graph of $H_1$ and $H_2$ if and only if it is an edge in both $H_1$ and~$H_2$.)
For correlated SBMs 
the connectivity threshold for the intersection graph is 
\begin{equation}
\label{eq:sbm_connectivity}
s^2 
\left( 
\frac{\alpha + \beta}{2} 
\right) 
= 1. 
\end{equation}
This 
suggests that~\eqref{eq:sbm_connectivity} may be the information-theoretic threshold for exact recovery of $\pi_*$ for correlated~SBMs. 
Our main result, Theorem~\ref{thm:graph_matching}, shows that this is indeed the case.

\subsection{Results}

We now describe our results, which 
address 
Objectives~\ref{q:community_recovery} and~\ref{q:graph_matching}. 
In Section~\ref{subsec:graph_matching}, we precisely characterize the fundamental information-theoretic limits for exact graph matching, thereby fully achieving Objective \ref{q:graph_matching}. In Section \ref{sub:community_recovery}, we provide partial answers to Objective~\ref{q:community_recovery}; in particular, these provide the  information-theoretic threshold for exact community recovery in the regime where $s^2 (\alpha + \beta)/2 > 1$. Finally, in Section \ref{sub:multiple_community_recovery}, we extend the ideas of Section \ref{sub:community_recovery} to establish achievability and impossibility results for exact community recovery with $K$ correlated SBMs.

\subsubsection{Exact Graph Matching}
\label{subsec:graph_matching}


We start with our main result, 
which determines the achievability region for exact graph matching in correlated SBMs, 
providing an estimator that correctly recovers the latent vertex correspondence above the information-theoretic threshold. 

\begin{theorem}
\label{thm:graph_matching} 
Fix constants $\alpha, \beta > 0$ and $s \in [0,1]$. 
Let $(G_1, G_2) \sim \mathrm{CSBM}\left( n,\frac{\alpha \log n}{n} ,\frac{\beta \log n}{n}, s \right)$.
Let $\wh{\pi}(G_1, G_2)$ be a vertex mapping that maximizes the number of agreeing edges between $G_1$ and~$G_2$ (that is, the number of matched pairs of vertices for which an edge exists between them in both graphs). If 
\begin{equation}
\label{eq:recovery_condition}
s^2 
\left( 
\frac{\alpha + \beta}{2} 
\right) 
> 1,
\end{equation}
then
\[
\lim\limits_{n \to \infty} \p \left( \wh{\pi}(G_1, G_2) = \pi_{*} \right) = 1.
\]
\end{theorem}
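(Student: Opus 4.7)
The plan is to prove Theorem~\ref{thm:graph_matching} by a first-moment (union bound) argument over permutations. By the distributional symmetry of the model---$\pi_*$ is uniform on $S_n$ and independent of everything else---I may assume $\pi_*=\mathrm{id}$ without loss of generality. Setting
\[
A(\pi):=\bigl|\bigl\{\{i,j\}\in\binom{[n]}{2}: \{i,j\}\in E(G_1),\ \{\pi(i),\pi(j)\}\in E(G_2)\bigr\}\bigr|,
\]
the estimator fails exactly when some $\pi\neq\mathrm{id}$ satisfies $A(\pi)\geq A(\mathrm{id})$, so it suffices to show $\sum_{\pi\neq\mathrm{id}}\p(A(\pi)\geq A(\mathrm{id}))=o(1)$; I would carry out the probability computations conditional on the community labels $\boldsymbol\sigma$ and average at the end.

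For fixed $\pi$, the difference $A(\pi)-A(\mathrm{id})$ is supported on pairs $\{i,j\}$ with $\{\pi(i),\pi(j)\}\neq\{i,j\}$, i.e.\ edges touching the displacement set $S_\pi:=\{v:\pi(v)\neq v\}$. Grouping these pairs into the orbits of the induced $\pi$-action on $\binom{[n]}{2}$ yields (conditional on $\boldsymbol\sigma$) an independent decomposition $A(\pi)-A(\mathrm{id})=\sum_O\Delta_O$, and a Chernoff bound gives
\[
\p\bigl(A(\pi)\geq A(\mathrm{id})\mid\boldsymbol\sigma\bigr)\leq \inf_{\lambda>0}\prod_O\E\!\left[e^{\lambda\Delta_O}\mid\boldsymbol\sigma\right]\leq n^{-c_\pi(\boldsymbol\sigma)},
\]
where the exponent $c_\pi(\boldsymbol\sigma)$ is determined by the cycle type of $\pi$ and the community composition of $S_\pi\cup\pi(S_\pi)$. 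Each orbit MGF is explicit in $p,q,s$ and of the form $1-\Theta(\log n/n)$; the optimal~$\lambda$ equates the ``false-positive'' and ``true-positive'' terms per orbit and yields a per-displaced-vertex rate matching the isolation rate of vertices in the intersection graph $H:=G_1\cap G_2'\sim\SBM(n,s^2p,s^2q)$.

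The union bound is then completed by partitioning permutations according to $k:=|S_\pi|$. For bounded $k$, the dominant contribution comes from transpositions $(u\,v)$: such a $\pi$ ties $\mathrm{id}$ essentially iff both $u$ and $v$ are isolated in $H$, an event of probability $\approx n^{-s^2(\alpha+\beta)/2}$ per vertex after averaging over $\boldsymbol\sigma$, so the expected number of isolated pairs is $\approx n^{2-s^2(\alpha+\beta)}=o(1)$ precisely under~\eqref{eq:recovery_condition}. Other small cycle types yield the same per-vertex rate. For growing $k$, the per-vertex rate being strictly greater than~$1$ produces enough slack (a factor of $n^{-\eps k}$) to dominate the combinatorial factor $\binom{n}{k}k!\leq n^k$. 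The principal obstacle I anticipate is the sharp MGF analysis for general cycles conditional on $\boldsymbol\sigma$: orbits of size $\geq 2$ couple vertices in possibly different communities so the MGF mixes $p$- and $q$-terms, and one must verify via an explicit optimization in~$\lambda$ (together with an averaging/convexity argument over community types) that the effective exponent is governed exactly by the arithmetic mean $(\alpha+\beta)/2$, which is precisely the connectivity threshold of the intersection graph~$H$.
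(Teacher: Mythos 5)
Your overall plan---a union bound over permutations organized by the number $k=|S_\pi|$ of displaced vertices, with per-permutation probability bounds from a Chernoff/MGF argument applied to the orbit (cycle) decomposition---is exactly the strategy the paper follows, and your identification of the $k=2$ (transposition) contribution with isolated vertices in the intersection graph $H=G_1\cap G_2'$ is the correct explanation of why $s^2(\alpha+\beta)/2 = 1$ is the threshold.

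There is, however, a genuine gap in the large-$k$ step. A plain Chernoff bound on $A(\pi)-A(\mathrm{id})$ does \emph{not} give a per-displaced-vertex exponent strictly greater than $1$ for all $k$ under \eqref{eq:recovery_condition}. When $k$ is a constant fraction of $n$, the number of vertex pairs on nontrivial orbits per displaced vertex drops by roughly a factor of two: for example, if $k_1 \approx |V_+|$ and $k_2 = 0$, then the count in Lemma~\ref{lemma:M} gives $M^+(\tau) \approx \binom{k_1}{2}$, about half of $k_1\cdot n/2$. The plain Chernoff exponent thus scales as only $\approx s^2(\alpha+\beta)/4$ per displaced vertex, and the union bound over the $\leq n^k$ permutations in that class only closes when $s^2(\alpha+\beta)/2 > 2$, not $>1$. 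The paper circumvents this precisely by studying the \emph{joint} generating function of $X(\tau)$ together with the true-positive counts $Y^+(\tau), Y^-(\tau)$ and tilting in all three variables: intersecting $\{X(\tau)\le 0\}$ with the typical event $\{Y^\pm(\tau)\ge t^\pm\}$ effectively doubles the exponent (see Lemma~\ref{lemma:prob_bound_2} and \eqref{eq:exponent_bound1}), while the atypical events $\{Y^\pm(\tau) < t^\pm\}$ are controlled separately via Bernstein's inequality together with a union bound over the $2^n$ choices of correctly-matched vertex set (crucially much smaller than $n^k$). Your proposal makes no mention of this second tilting, and without it the argument stalls at the suboptimal threshold $s^2(\alpha+\beta)/2 > 2$ for large $k$; the paper explicitly flags this issue in the remark after Lemma~\ref{lemma:pgf_bound}.

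A secondary point: you describe the orbit MGF as ``explicit in $p,q,s$.'' In the correlated SBM---unlike correlated Erd\H{o}s--R\'enyi or Gaussian Wigner---the MGF of a cycle is not a simple closed-form function of the parameters: it depends on the precise alternating pattern of intra- and inter-community pairs along the cycle, not merely their counts, because the transfer matrices at successive steps do not commute. The paper avoids this by proving recursive multiplicative \emph{upper bounds} on the per-step PGF (Lemmas~\ref{lemma:phi_k_bound} and~\ref{lemma:pgf_of_a_cycle}) rather than computing it exactly. Your proposed ``averaging/convexity argument over community types'' is plausible in spirit but would need to be made precise; it is not an automatic consequence of a standard tilting argument.
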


We remark that the estimator $\wh{\pi}$ used in Theorem \ref{thm:graph_matching} is a natural and well-motivated estimator for the latent mapping $\pi_*$. It was first considered by Pedarsani and Grossglauser \cite{pedarsani2011privacy} in the context of the correlated Erd\H{o}s-R\'{e}nyi model, where it is the maximum a posteriori (MAP) estimate~\cite{cullina2016improved, cullina2018exact, onaran2016optimal}. As a result, it achieves the information-theoretic threshold for exact recovery of $\pi_{*}$ in the correlated Erd\H{o}s-R\'{e}nyi model \cite{cullina2018exact, wu2021settling}. 
This estimator has also been studied in the context of correlated SBMs by Onaran, Erkip, and Garg \cite{onaran2016optimal}; they show that if the commmunity labels of all vertices in $G_1$ and $G_2$ are known, then the permutation which maximizes the number of agreeing edges {\it and} is consistent with the community labels (i.e., does not map a vertex with label $+1$ to a vertex of label $-1$) succeeds in recovering $\pi_*$ exactly, provided that the (suboptimal) condition 
$s ( 1 - \sqrt{1 - s^2} ) 
\left( \alpha + \beta \right) / 2
> 3$ 
holds. Theorem~\ref{thm:graph_matching} improves on this result using a more refined analysis, 
obtaining the optimal condition~\eqref{eq:recovery_condition}, and not assuming any  knowledge of community~labels.

The next result establishes a converse to Theorem~\ref{thm:graph_matching}. This was previously proven in~\cite{cullina2016simultaneous}.

\begin{theorem}
\label{thm:graph_matching_impossibility} 
Fix constants $\alpha, \beta > 0$ and $s \in [0,1]$. 
Let $(G_1, G_2) \sim \mathrm{CSBM}\left( n, \frac{\alpha \log n}{n}, \frac{\beta \log n}{n}, s \right)$ and suppose that 
\begin{equation}
\label{eq:impossibility_condition}
s^2 \left( \frac{\alpha + \beta}{2} \right) < 1.
\end{equation}
Then for any estimator $\widetilde{\pi}(G_1, G_2)$, we have that
$\lim\limits_{n \to \infty} \p ( \widetilde{\pi}(G_1, G_2) = \pi_*) = 0$. 
\end{theorem}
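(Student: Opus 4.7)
The strategy is to show that, under the impossibility condition, the posterior of $\pi_*$ given the observed data fails to concentrate on any single permutation, so no estimator can succeed. By Bayes optimality of the MAP estimator under the uniform prior on $\pi_*$, it suffices to bound $\P(\wh{\pi}_{\mathrm{MAP}} = \pi_*)$. A further simplification I would exploit is a genie reduction: revealing $\boldsymbol{\sigma}$ to the estimator can only help, so it suffices to prove impossibility given knowledge of the community labels. The genie-aided posterior on $\pi$ is then proportional to $L(\pi \mid \boldsymbol{\sigma}) := \P(G_1 = g_1,\, G_2' = \pi^{-1}(g_2) \mid \boldsymbol{\sigma})$, which factors as a product over unordered vertex pairs.

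The key combinatorial step is to derive a clean sufficient condition for a transposition $(uv)$ to act as an ``invisible'' symmetry, i.e.\ $L(\pi_* \mid \boldsymbol{\sigma}) = L(\pi_* \circ (uv) \mid \boldsymbol{\sigma})$. Writing out the ratio, unaffected factors cancel, and the question reduces to a product over $w \notin \{u,v\}$ of local ratios in the joint Bernoulli kernel of $(G_1(u,w), G_2'(u,w), G_1(v,w), G_2'(v,w))$. A direct check shows each local factor equals $1$ unless $\sigma_u \neq \sigma_v$ or $w$ is ``separating''---meaning both $\mathbf{1}[(u,w) \in G_1] \neq \mathbf{1}[(v,w) \in G_1]$ and $\mathbf{1}[(u,w) \in G_2'] \neq \mathbf{1}[(v,w) \in G_2']$. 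Call $(u,v)$ confusable if $\sigma_u = \sigma_v$ and no $w$ is separating. A first-moment calculation using $\P(\text{separating at } w \mid \sigma_u = \sigma_v, \sigma_w) \approx 2 s^2 (p \mathbf{1}[\sigma_w = \sigma_u] + q \mathbf{1}[\sigma_w \neq \sigma_u])$ gives $\P(\text{no separating } w) \approx n^{-s^2(\alpha + \beta)}$, so the expected number of confusable pairs is $\Theta(n^{2 - s^2(\alpha + \beta)})$, which diverges precisely under~\eqref{eq:impossibility_condition}. A routine second-moment calculation then establishes that this count concentrates around its mean with probability tending to $1$.

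The hardest part is promoting ``many confusable pairs'' into the full conclusion $\P(\widetilde{\pi} = \pi_*) \to 0$ rather than the weaker bound $\le 1/2$ provided by a single confusable pair. I would greedily extract a vertex-disjoint sub-collection of confusable pairs of size $k(n) \to \infty$. The main obstacle is verifying that the product of disjoint confusable transpositions still preserves $L(\cdot \mid \boldsymbol{\sigma})$: pairwise confusability does not automatically compose, because a compound swap can modify cross-pair edges such as $(u_i, u_j)$. Resolving this likely requires strengthening the confusability condition so that cross-pair edges behave uniformly, e.g.\ by coupling with isolation in the intersection graph $G_1 \cap G_2'$, whose connectivity threshold is exactly $s^2(\alpha + \beta)/2 = 1$. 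Once disjoint composition is justified, the genie-aided MAP orbit has size at least $2^{k(n)}$, giving $\P(\widetilde{\pi} = \pi_*) \le 2^{-k(n)} \to 0$, which transfers back to the non-genie setting by the reduction in the first paragraph.
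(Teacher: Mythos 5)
Your framing—genie reduction with $\boldsymbol{\sigma}$ revealed, Bayes optimality of the MAP estimator under the uniform prior, and the observation that the intersection graph $G_1 \cap G_2'$ with connectivity threshold at $s^2(\alpha+\beta)/2 = 1$ governs the problem—is on the right track, and your first-moment estimate for separating witnesses is correct. The genuine gap is the one you flag yourself in the last paragraph: you define ``confusable'' as \emph{exact} invariance $L(\pi_* \mid \boldsymbol{\sigma}) = L(\pi_* \circ (uv) \mid \boldsymbol{\sigma})$, and, as you note, exact invariance of single transpositions does not compose, because a product of disjoint swaps also rearranges the cross-pair entries $(u_i,u_j)$, $(u_i,v_j)$, etc., about which your per-pair condition says nothing. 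You then suggest ``strengthening the confusability condition'' via isolation in the intersection graph as a fix, but this is only a gesture; it is not executed, and the orbit-size bound $2^{k(n)}$ you invoke at the end presumes exactly the composability you have not established.

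The paper sidesteps the composition problem by dropping exact invariance in favor of monotonicity. It considers the set $T_+^{\pi}$ (resp.\ $T_-^{\pi}$) of vertices in $V_+$ (resp.\ $V_-$) isolated in the intersection graph under $\pi$, and shows that \emph{any} permutation $\rho_1$ of $T_+^{\pi}$ composed with any permutation $\rho_2$ of $T_-^{\pi}$, applied inside $\pi$, produces a permutation $\pi'$ whose posterior is at least as large as that of $\pi$. The mechanism: since $\pi'(V_+)=\pi(V_+)$ and $\pi'(V_-)=\pi(V_-)$, the $\nu^{\pm}$ factors in the posterior (Lemma~\ref{lemma:likelihood}) are unchanged by Proposition~\ref{prop:nu}, while the matched-edge counts $\mu^{\pm}_{11}$ can only increase by Proposition~\ref{prop:mu} because isolated vertices contributed no matched edges to begin with. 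Monotone non-decrease composes automatically, so the MAP argmax set has size at least $|T_+^{\pi}|!\cdot|T_-^{\pi}|!$ (Lemma~\ref{lemma:map}). The remaining work (Lemma~\ref{lemma:T_sizes}) is a first- and second-moment argument showing $|T_+^{\pi_*}|,|T_-^{\pi_*}|\ge n^{\gamma}$ with high probability, which is over single vertices rather than over pairs, so the dependence structure is substantially cleaner than the second-moment calculation you would face for confusable pairs (where pairs sharing a vertex are strongly correlated) and avoids the vertex-disjoint greedy-extraction step altogether.
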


Together, Theorems \ref{thm:graph_matching} and \ref{thm:graph_matching_impossibility} establish the fundamental information-theoretic limits for exact recovery of $\pi_*$. This is the natural generalization of the corresponding results for correlated Erd\H{o}s-R\'{e}nyi graphs: 
when $\alpha = \beta$, the same estimator $\wh{\pi}$ succeeds if $s^2 \alpha > 1$, else if $s^2 \alpha < 1$, then no estimator can exactly recover $\pi_*$ with probability bounded away from zero~\cite{cullina2018exact, wu2021settling}.  

An overview of the proofs of Theorems \ref{thm:graph_matching} and \ref{thm:graph_matching_impossibility} is given in Section~\ref{sec:proofs_overview}. 

\subsubsection{Exact Community Recovery} 
\label{sub:community_recovery}

We now turn to exact community recovery with two correlated SBMs, 
formalizing the arguments of Section~\ref{sec:models}. 
The strategy is to first perform exact graph matching, 
then to combine the two graphs by taking their union with respect to the matching, 
and finally to run an exact community recovery algorithm on this new graph. 

\begin{theorem}
\label{thm:community_recovery} 
Fix constants $\alpha, \beta > 0$ and $s \in [0,1]$. 
Let $(G_1, G_2) \sim \mathrm{CSBM}\left( n,\frac{\alpha \log n}{n} ,\frac{\beta \log n}{n}, s \right)$. 
Suppose that $s^{2} \left( \alpha + \beta \right) / 2 > 1$ and 
\begin{equation}
\label{eq:community_achievability}
\left| \sqrt{\alpha} - \sqrt{\beta} \right| > \sqrt{ \frac{2}{1 - (1 - s)^2} }.
\end{equation}
Then there is an estimator $\wh{\boldsymbol{\sigma}} = \wh{\boldsymbol{\sigma}}(G_1, G_2)$ such that 
\[
\lim\limits_{n \to \infty} \p \left( \overlap \left(\wh{\boldsymbol{\sigma}}, \boldsymbol{\sigma} \right) = 1 \right) = 1.
\]
\end{theorem}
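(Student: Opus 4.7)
The approach is exactly the three-stage pipeline hinted at in Section~\ref{sec:models}: match, overlay, then recover. I proceed as follows.

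First, I invoke Theorem~\ref{thm:graph_matching}. Since the assumption $s^{2}(\alpha+\beta)/2 > 1$ is precisely condition~\eqref{eq:recovery_condition}, the maximum-agreeing-edges estimator $\wh{\pi} = \wh{\pi}(G_1,G_2)$ satisfies $\p(\wh{\pi} = \pi_{*}) \to 1$. Let $\cE_{1} := \{\wh{\pi} = \pi_{*}\}$ denote this good event.

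Second, I use $\wh{\pi}$ to ``undo'' the relabeling of $G_2$: define $\wh{G}_2'$ to be the graph on vertex set $[n]$ in which $(i,j)$ is an edge iff $(\wh{\pi}(i),\wh{\pi}(j))$ is an edge in $G_2$. On the event $\cE_{1}$, we have $\wh{G}_2' = G_2'$, the unpermuted subsampled copy from the construction of the correlated SBM. Now form the overlay $H := G_1 \cup \wh{G}_2'$ (union of edge sets on the common vertex set $[n]$). On $\cE_{1}$, $H$ coincides with $H_{*} := G_{1} \cup G_{2}'$, and, as computed in the discussion preceding~\eqref{eq:threshold_union_graph}, the edge probabilities in $H_{*}$ are $\alpha(1-(1-s)^{2})\log(n)/n$ within communities and $\beta(1-(1-s)^{2})\log(n)/n$ across communities, with the same community labels $\boldsymbol{\sigma}$. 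Thus $H_{*} \sim \SBM\!\left(n, \alpha(1-(1-s)^{2})\log(n)/n, \beta(1-(1-s)^{2})\log(n)/n\right)$.

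Third, I run a standard exact recovery algorithm for the SBM on $H$ to produce $\wh{\boldsymbol{\sigma}}$; for instance, the efficient estimator from~\cite{abbe2016exact}. Setting $\alpha' := \alpha(1-(1-s)^{2})$ and $\beta' := \beta(1-(1-s)^{2})$, the hypothesis~\eqref{eq:community_achievability} is exactly $|\sqrt{\alpha'} - \sqrt{\beta'}| > \sqrt{2}$, which is the achievability condition~\eqref{eq:community_achievability_threshold} for $H_{*}$. Let $\cE_{2}$ be the event that this algorithm, when applied to $H_{*}$, outputs a labeling with overlap $1$; by the classical exact-recovery result for the SBM, $\p(\cE_{2}) \to 1$.

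Finally, I combine via a union bound. On $\cE_{1} \cap \cE_{2}$, the algorithm applied to $H$ is the same as the algorithm applied to $H_{*}$, so $\overlap(\wh{\boldsymbol{\sigma}}, \boldsymbol{\sigma}) = 1$. Hence
\[
\p\!\left(\overlap(\wh{\boldsymbol{\sigma}},\boldsymbol{\sigma}) = 1\right) \ge \p(\cE_{1}\cap \cE_{2}) \ge 1 - \p(\cE_{1}^{c}) - \p(\cE_{2}^{c}) \longrightarrow 1.
\]
There is no real obstacle beyond the black-box invocations; the only point that requires a modicum of care is ensuring that the two invoked results hold \emph{simultaneously} with high probability, which is handled by the union bound above, and that conditional on $\cE_{1}$ the overlay $H$ is measurably determined by $(G_1,G_2)$ (hence $\wh{\boldsymbol{\sigma}}$ is a legitimate estimator depending only on the observed data).
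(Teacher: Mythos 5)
Your proposal is correct and follows essentially the same argument as the paper's proof: compute $\wh{\pi}$ via Theorem~\ref{thm:graph_matching}, form the overlay graph (your $H = G_1 \cup \wh{G}_2'$ is precisely the paper's $G_1 \lor_{\wh{\pi}} G_2$), observe it equals $H_* \sim \SBM(n,\alpha(1-(1-s)^2)\log n/n, \beta(1-(1-s)^2)\log n/n)$ on the good matching event, and apply the classical SBM exact-recovery threshold together with a union bound. The only cosmetic difference is that you introduce explicit events $\cE_1, \cE_2$ whereas the paper folds this into a single chain of inequalities.
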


The proof readily follows from Theorem~\ref{thm:graph_matching} and existing results on exact community recovery in the SBM~\cite{abbe2016exact, mossel2016consistency,abbe2015community,Abbe_survey}. 
\begin{proof}
Given a permutation $\pi$ mapping $[n]$ to $[n]$, we let $G_1 \lor_{\pi} G_2$ be the {\it union graph with respect to $\pi$}, so that $(i,j)$ is an edge in $G_1 \lor_{\pi} G_2$ if and only if $(i,j)$ is an edge in $G_1$ or $(\pi(i), \pi(j))$ is an edge in $G_2$. In the special case where $\pi = \pi_*$, $H_{*} := G_1 \lor_{\pi_{*}} G_2$ is the subgraph of the parent graph $G$ consisting of edges that are in either $G_1$ or $G_2'$. It is readily seen that
\begin{equation}
\label{eq:H_star_distribution}
H_{*} \sim \mathrm{SBM} \left( n, \alpha(1 - (1 - s)^2) \frac{\log n}{n} , \beta (1 - (1 - s)^2) \frac{\log n}{n} \right).
\end{equation}
The algorithm we study first computes $\wh{\pi}(G_1, G_2)$ according to Theorem~\ref{thm:graph_matching}. 
We then pick any community recovery algorithm that is known to succeed until the 
information-theoretic limit, 
and run it on $\wh{H} : = G_1 \lor_{\wh{\pi}} G_2$; we denote the result of this algorithm by $\wh{\boldsymbol{\sigma}}( \wh{H})$. We can then write
\begin{align*}
\p( \overlap(\wh{\boldsymbol{\sigma}}(\wh{H}), \boldsymbol{\sigma}) \neq 1 ) & \le \p ( \{ \overlap(\wh{\boldsymbol{\sigma}}(\wh{H}), \boldsymbol{\sigma}) \neq 1 \} \cap \{ \wh{H} = H_* \} ) + \p(\wh{H} \neq H_* ) \\
& \le \p( \overlap(\wh{\boldsymbol{\sigma}}(H_*), \boldsymbol{\sigma}) \neq 1) + \p( \wh{\pi} \neq \pi_\star ),
\end{align*}
where, to obtain the inequality in the second line, we have used that $\wh{\boldsymbol{\sigma}}(\wh{H}) = \wh{\boldsymbol{\sigma}}(H_*)$ on the event $\{\wh{H} = H_* \}$, and that $\wh{H} \neq H_*$ implies $\wh{\pi} \neq \pi_*$. 
Since exact community recovery on $H_{*}$ is possible when~\eqref{eq:community_achievability} holds~\cite{abbe2016exact, mossel2016consistency,abbe2015community,Abbe_survey}, 
we know that 
$\p(\overlap(\wh{\boldsymbol{\sigma}}(H_*), \boldsymbol{\sigma}) \neq 1) \to 0$ as $n \to \infty$. 
In light of Theorem~\ref{thm:graph_matching} we also have that $\p( \wh{\pi} \neq \pi_*) \to 0$ when $s^{2}(\alpha + \beta)/2 > 1$, concluding the proof. 
\end{proof}

\begin{figure}[t]
    \centering
    \begin{subfigure}{0.3\textwidth}
        \centering
        \includegraphics[width=\textwidth]{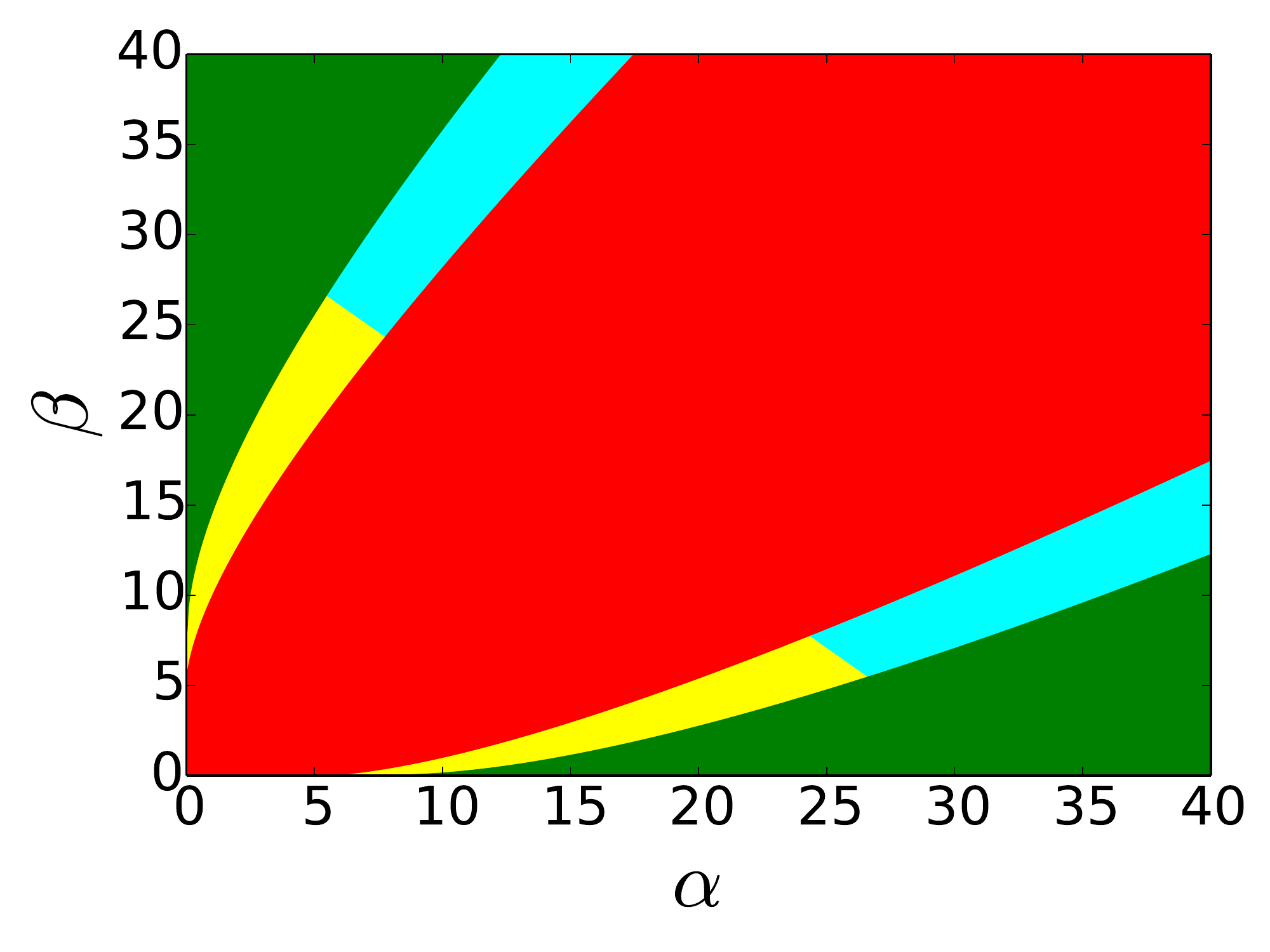}
        \caption{Fixed $s=0.25$.}
    \end{subfigure}
    \quad 
    \begin{subfigure}{0.3\textwidth}
        \centering
        \includegraphics[width=\textwidth]{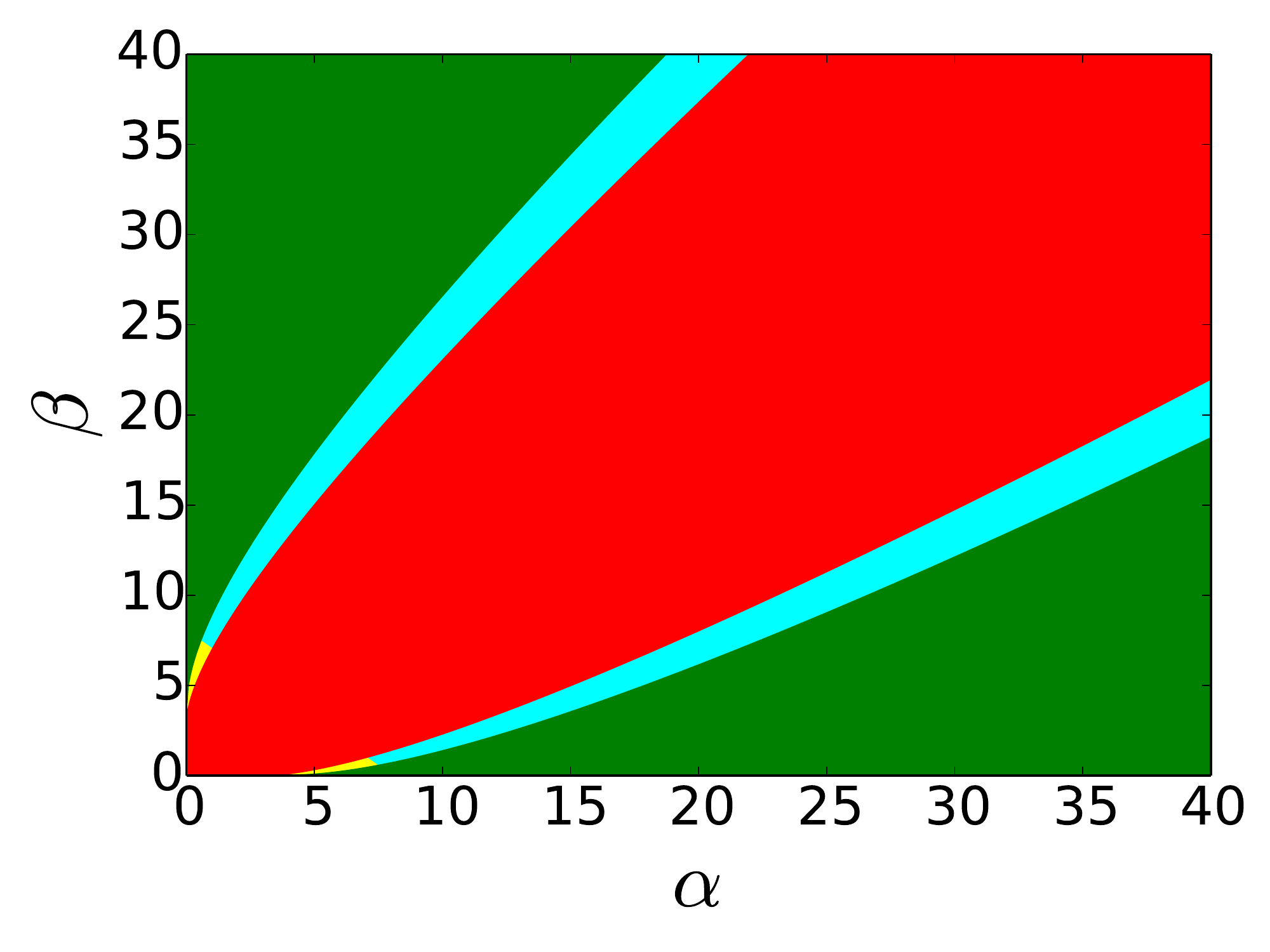}
        \caption{Fixed $s=0.5$.}
    \end{subfigure}
    \quad 
    \begin{subfigure}{0.3\textwidth}
        \centering
        \includegraphics[width=\textwidth]{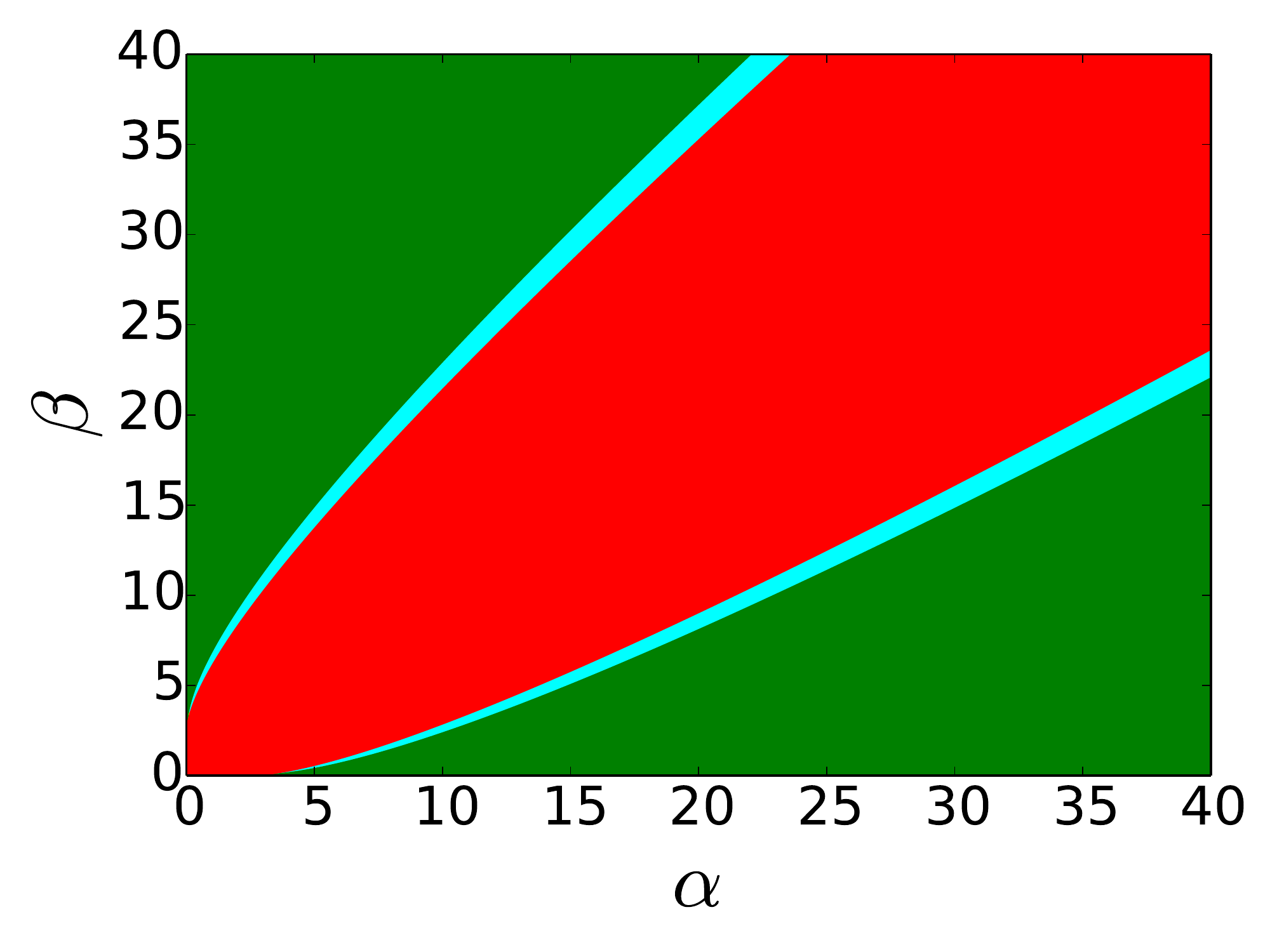}
        \caption{Fixed $s=0.75$.}
    \end{subfigure}
    \caption{Phase diagrams for exact community recovery for fixed $s$, with $\alpha \in [0,40]$ and $\beta \in [0,40]$ on the axes. 
    \emph{Green region:} exact community recovery is possible from $G_{1}$ alone; 
    \emph{Cyan region:} exact community recovery is impossible from $G_{1}$ alone, but it is possible from $(G_{1},G_{2})$; 
    \emph{Yellow region:} exact community recovery is impossible from $G_{1}$ alone, unknown if it is possible from $(G_{1},G_{2})$; 
    \emph{Red region:} exact community recovery is impossible from $(G_{1},G_{2})$. 
    }
    \label{fig:phase_diagram_fixed_s}
\end{figure}

\begin{figure}[t]
    \centering
    \begin{subfigure}{0.3\textwidth}
        \centering
        \includegraphics[width=\textwidth]{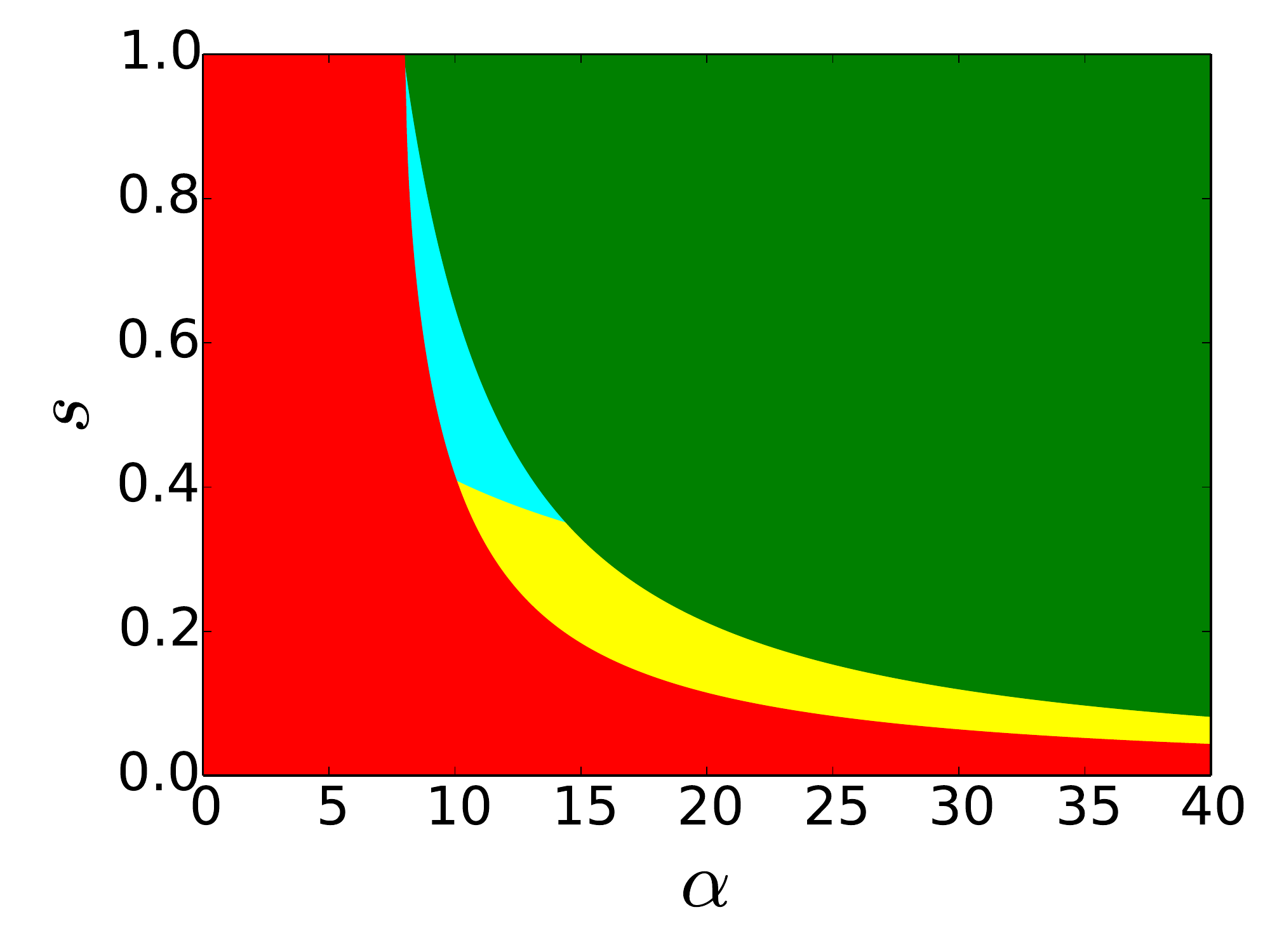}
        \caption{Fixed $\beta=2$.}
    \end{subfigure}
    \quad 
    \begin{subfigure}{0.3\textwidth}
        \centering
        \includegraphics[width=\textwidth]{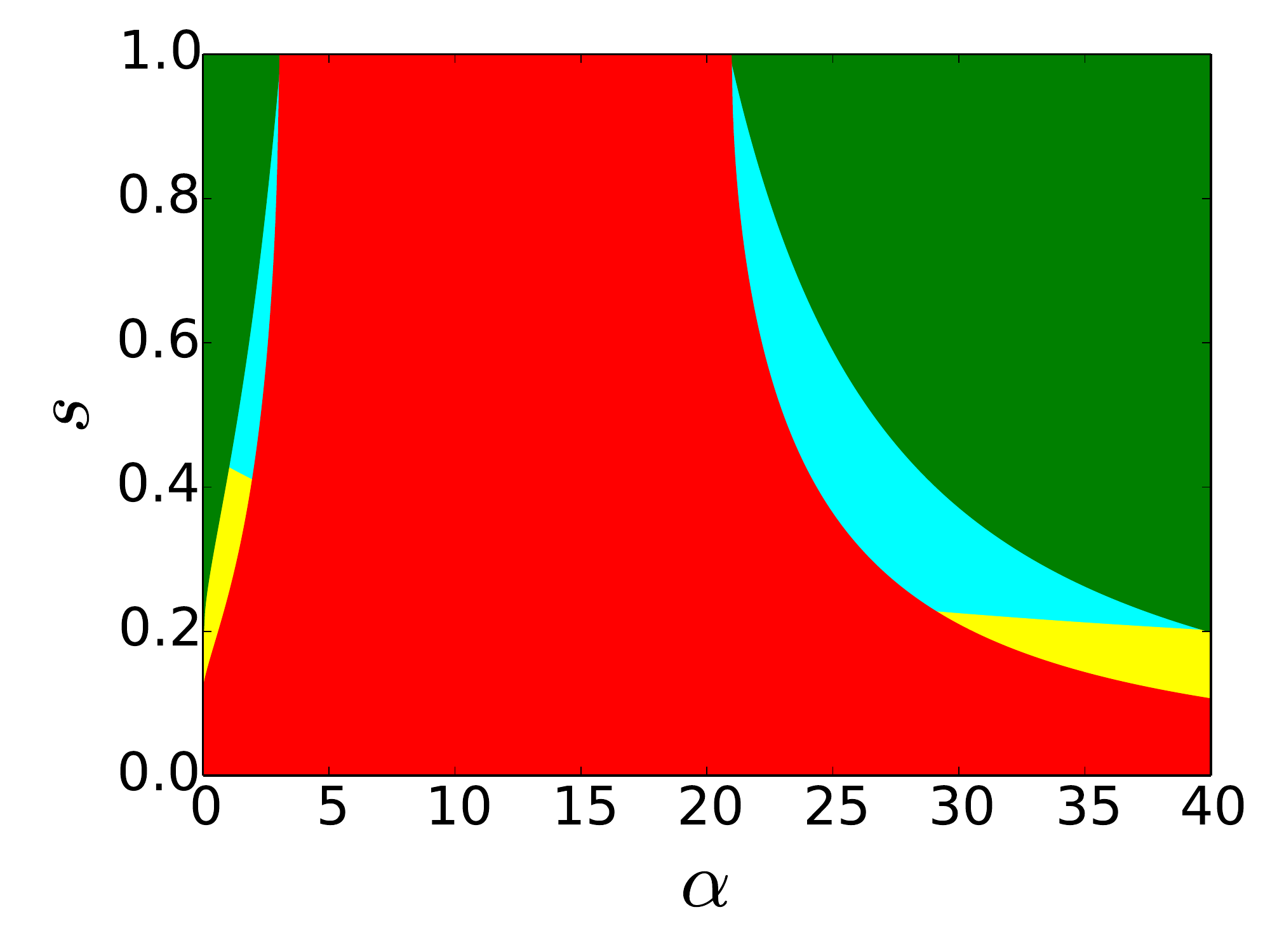}
        \caption{Fixed $\beta = 10$.}
    \end{subfigure}
    \quad 
    \begin{subfigure}{0.3\textwidth}
        \centering
        \includegraphics[width=\textwidth]{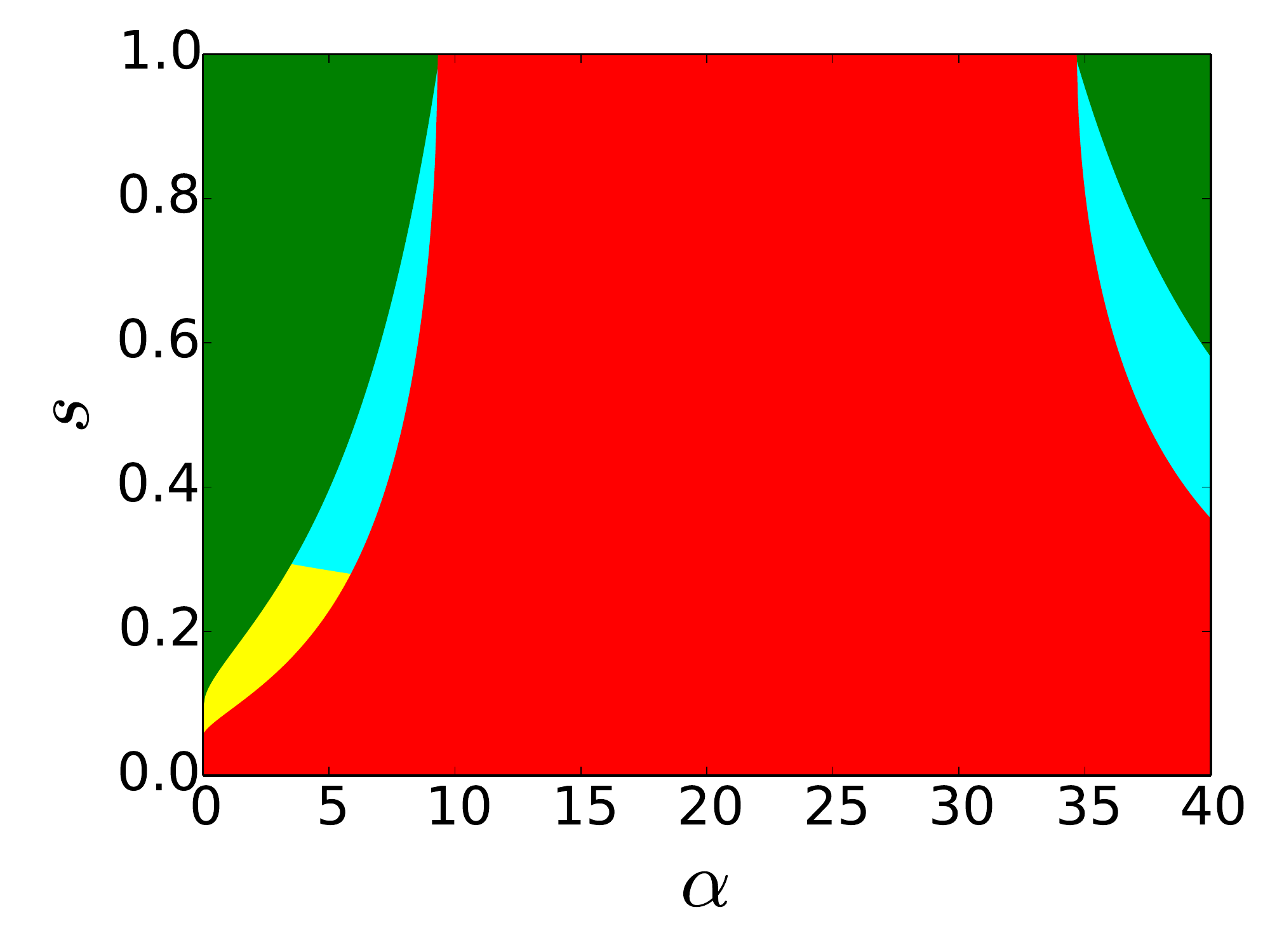}
        \caption{Fixed $\beta = 20$.}
    \end{subfigure}
    \caption{Phase diagrams for exact community recovery for fixed $\beta$, with $\alpha \in [0,40]$ and $s \in [0,1]$ on the axes. (Colors as in  Fig.~\ref{fig:phase_diagram_fixed_s}.)}
    \label{fig:phase_diagram_fixed_beta}
\end{figure}

By the discussion in Section~\ref{sec:models}, 
Theorem~\ref{thm:community_recovery} establishes the existence of a region of the parameter space 
where 
(i) there exists an algorithm that can exactly recover the communities using \emph{both} $G_{1}$ and $G_{2}$, 
but 
(ii) it is information-theoretically impossible to do so using $G_{1}$ (or $G_{2}$) alone. 
Figures~\ref{fig:phase_diagram_fixed_s},~\ref{fig:phase_diagram_fixed_beta}, and~\ref{fig:phase_diagram_fixed_ratio} illustrate phase diagrams of the parameter space, 
where this region is highlighted in 
cyan.

To complement the achievability result of Theorem~\ref{thm:community_recovery}, our next result provides a condition under which exact community recovery is information-theoretically impossible. 

\begin{theorem}
\label{thm:community_recovery_impossibility} 
Fix constants $\alpha, \beta > 0$ and $s \in [0,1]$. 
Let $(G_1, G_2) \sim \mathrm{CSBM} \left( n, \alpha \frac{\log n}{n}, \beta \frac{\log n}{n}, s \right)$ and suppose that 
\begin{equation}
\label{eq:union_graph_impossibility}
\left| \sqrt{\alpha} - \sqrt{\beta} \right| < \sqrt{ \frac{2}{1 - (1 - s)^2} }.
\end{equation}
Then for any estimator $\widetilde{\boldsymbol{\sigma}} = \widetilde{\boldsymbol{\sigma}}(G_1, G_2)$, we have that
$
\lim\limits_{n \to \infty} \p ( \overlap( \widetilde{\boldsymbol{\sigma}} ,\boldsymbol{\sigma}) = 1) = 0.
$
\end{theorem}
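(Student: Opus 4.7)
The plan is to reduce to the known impossibility of exact community recovery in a single SBM. The key point is that revealing the latent permutation $\pi_{*}$ to the estimator can only help, and that even with $\pi_{*}$ in hand the pair $(G_{1}, G_{2})$ contains no more information about $\boldsymbol{\sigma}$ than the union graph $H_{*} = G_{1} \lor_{\pi_{*}} G_{2}$ appearing in the proof of Theorem~\ref{thm:community_recovery}. Suppose for contradiction that some estimator $\widetilde{\boldsymbol{\sigma}}(G_{1},G_{2})$ satisfies $\limsup_{n \to \infty} \p(\overlap(\widetilde{\boldsymbol{\sigma}}, \boldsymbol{\sigma}) = 1) > 0$. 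First I would invoke a trivial reduction: any such estimator can also be implemented from the enlarged input $(G_{1}, G_{2}, \pi_{*})$ by ignoring $\pi_{*}$, and from this enlarged input one can reconstruct $G_{2}' = \pi_{*}^{-1}(G_{2})$ and hence~$H_{*}$.

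The central step is a colored-graph computation. Conditional on $\boldsymbol{\sigma}$, each pair $(i,j)$ independently falls into one of four categories---present in both $G_{1}$ and $G_{2}'$, only $G_{1}$, only $G_{2}'$, or absent---with probabilities determined by $\sigma_{i}\sigma_{j}$. Using the identity $1 - (1-s)^{2} = s(2-s)$, a short calculation shows that \emph{given} that $(i,j)$ is an edge of $H_{*}$, its ``type'' (both / only $G_{1}$ / only $G_{2}'$) has distribution $\bigl(\tfrac{s}{2-s}, \tfrac{1-s}{2-s}, \tfrac{1-s}{2-s}\bigr)$ regardless of the value of $\sigma_{i}\sigma_{j}$, and that types across distinct edges are conditionally independent given $H_{*}$ and $\boldsymbol{\sigma}$. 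Thus the ``type information'' carries no extra signal about $\boldsymbol{\sigma}$ beyond $H_{*}$.

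Consequently one may simulate $(G_{1}, G_{2}, \pi_{*})$ from $H_{*}$ alone: sample a uniformly random permutation $\dot{\pi}$, independently color each edge of $H_{*}$ with the above type distribution to produce $\dot{G}_{1}$ and $\dot{G}_{2}'$, and set $\dot{G}_{2}$ to be the relabeling of $\dot{G}_{2}'$ by $\dot{\pi}$. The joint law of $(\dot{G}_{1}, \dot{G}_{2}, \dot{\pi}, \boldsymbol{\sigma})$ then equals that of $(G_{1}, G_{2}, \pi_{*}, \boldsymbol{\sigma})$, so $\widetilde{\boldsymbol{\sigma}}(\dot{G}_{1}, \dot{G}_{2})$ is a (randomized) estimator of $\boldsymbol{\sigma}$ based on $H_{*}$ alone whose probability of exact recovery matches that of $\widetilde{\boldsymbol{\sigma}}$ on $(G_{1}, G_{2})$.

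Finally, by~\eqref{eq:H_star_distribution} we have $H_{*} \sim \SBM\bigl(n, \alpha(1-(1-s)^{2}) \log n / n, \beta(1-(1-s)^{2}) \log n / n\bigr)$, and hypothesis~\eqref{eq:union_graph_impossibility} is exactly $|\sqrt{\alpha(1-(1-s)^{2})} - \sqrt{\beta(1-(1-s)^{2})}| < \sqrt{2}$. The classical impossibility of exact recovery in a single SBM~\cite{abbe2016exact, mossel2016consistency, abbe2015community, Abbe_survey} then forces any estimator based on $H_{*}$ alone to succeed with probability tending to $0$, contradicting the $\limsup > 0$ assumption. The only step that requires any work is verifying that the edge-type distribution does not depend on $\boldsymbol{\sigma}$; everything else is a direct appeal to the one-graph impossibility theorem.
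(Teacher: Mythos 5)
Your argument is correct and matches the paper's proof almost step for step: both simulate $(G_1, G_2)$ from a single SBM $H \sim \SBM(n, \alpha(1-(1-s)^2)\log n/n, \beta(1-(1-s)^2)\log n/n)$ by independently coloring each edge of $H$ with the type distribution $(r_{01}, r_{10}, r_{11}) = \bigl(\tfrac{1-s}{2-s}, \tfrac{1-s}{2-s}, \tfrac{s}{2-s}\bigr)$ (which, as you note, is free of $\boldsymbol{\sigma}$) and then applying a fresh uniform permutation, after which the one-graph impossibility theorem gives the contradiction. The preliminary framing of ``reveal $\pi_*$ and reconstruct $H_*$'' is a slight reordering that the paper skips by directly exhibiting the coupling, but it does not change the substance.
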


The idea behind the proof is a simulation argument. 
Recall $H_{*} := G_1 \lor_{\pi_{*}} G_2$ 
from the proof of Theorem~\ref{thm:community_recovery}, 
and note that 
$H_{*} \sim \mathrm{SBM} \left( n, \alpha (1 - (1 - s)^2) \log(n)/n, \beta (1 - (1 - s)^2 ) \log(n)/n \right)$. 
From $H_{*}$ it is possible to simulate $(G_{1}, G_{2})$, 
and so if exact community recovery is possible given $(G_{1}, G_{2})$, 
then it is also possible given $H_{*}$. 
However, it is known~\cite{abbe2016exact, mossel2016consistency,abbe2015community,Abbe_survey} that exact community recovery is not possible from $H_{*}$ if~\eqref{eq:union_graph_impossibility} holds. 
See Section~\ref{sec:impossibility_community_recovery} for the full proof.

\begin{figure}[t!]
    \centering
    \begin{subfigure}{0.3\textwidth}
        \centering
        \includegraphics[width=\textwidth]{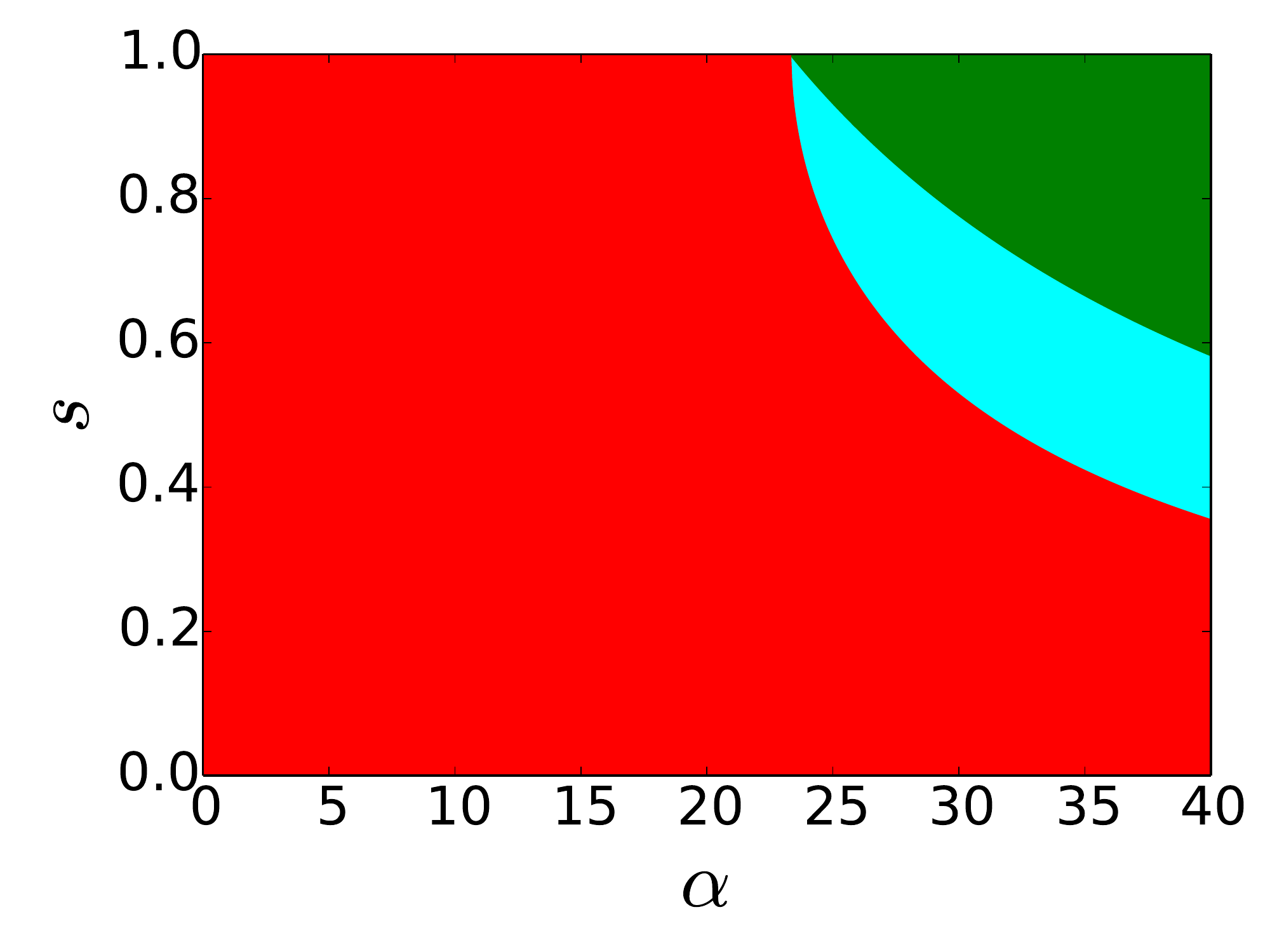}
        \caption{Fixed $\alpha/\beta=2$.}
    \end{subfigure}
    \quad 
    \begin{subfigure}{0.3\textwidth}
        \centering
        \includegraphics[width=\textwidth]{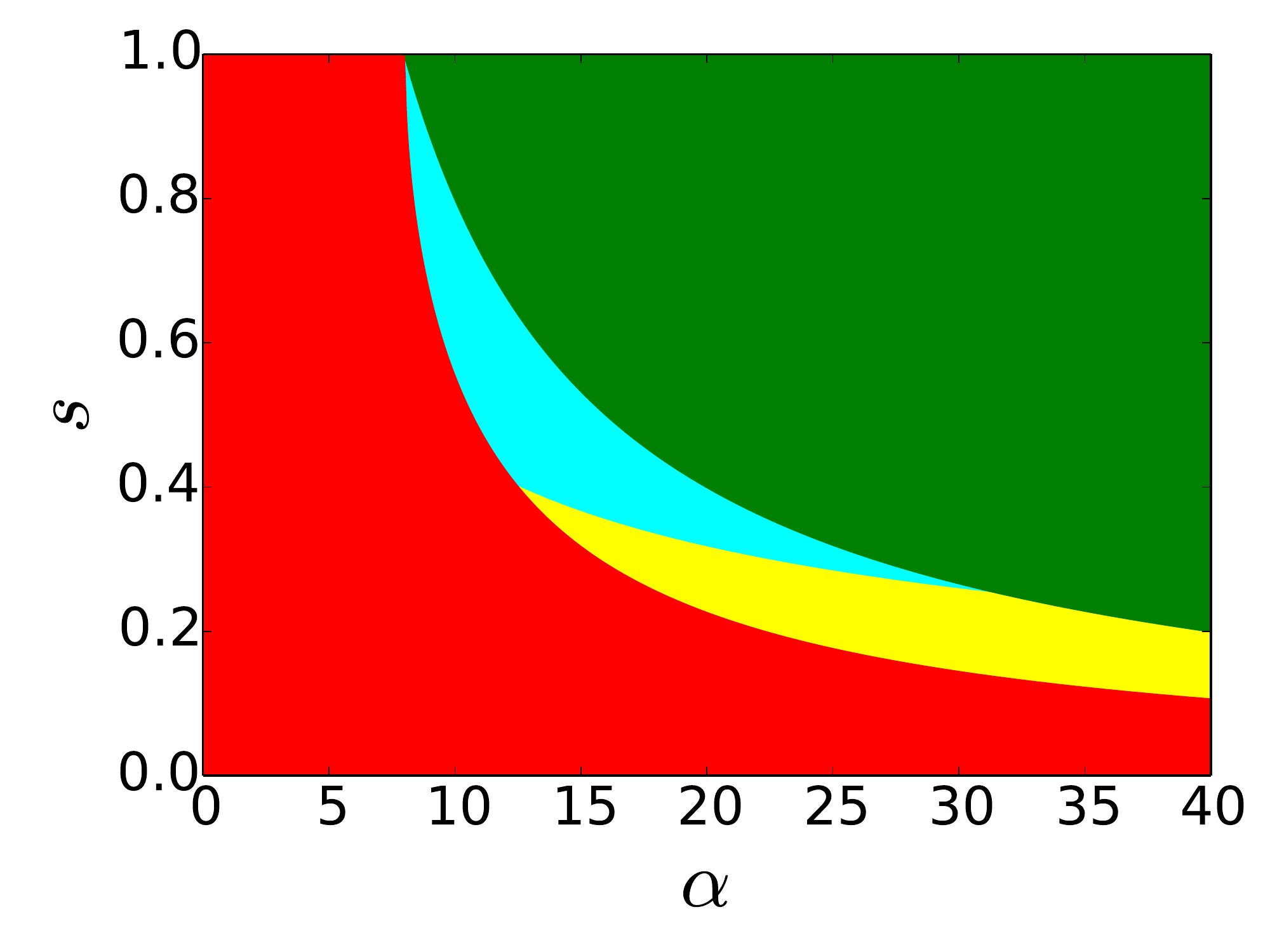}
        \caption{Fixed $\alpha/\beta = 4$.}
    \end{subfigure}
    \quad 
    \begin{subfigure}{0.3\textwidth}
        \centering
        \includegraphics[width=\textwidth]{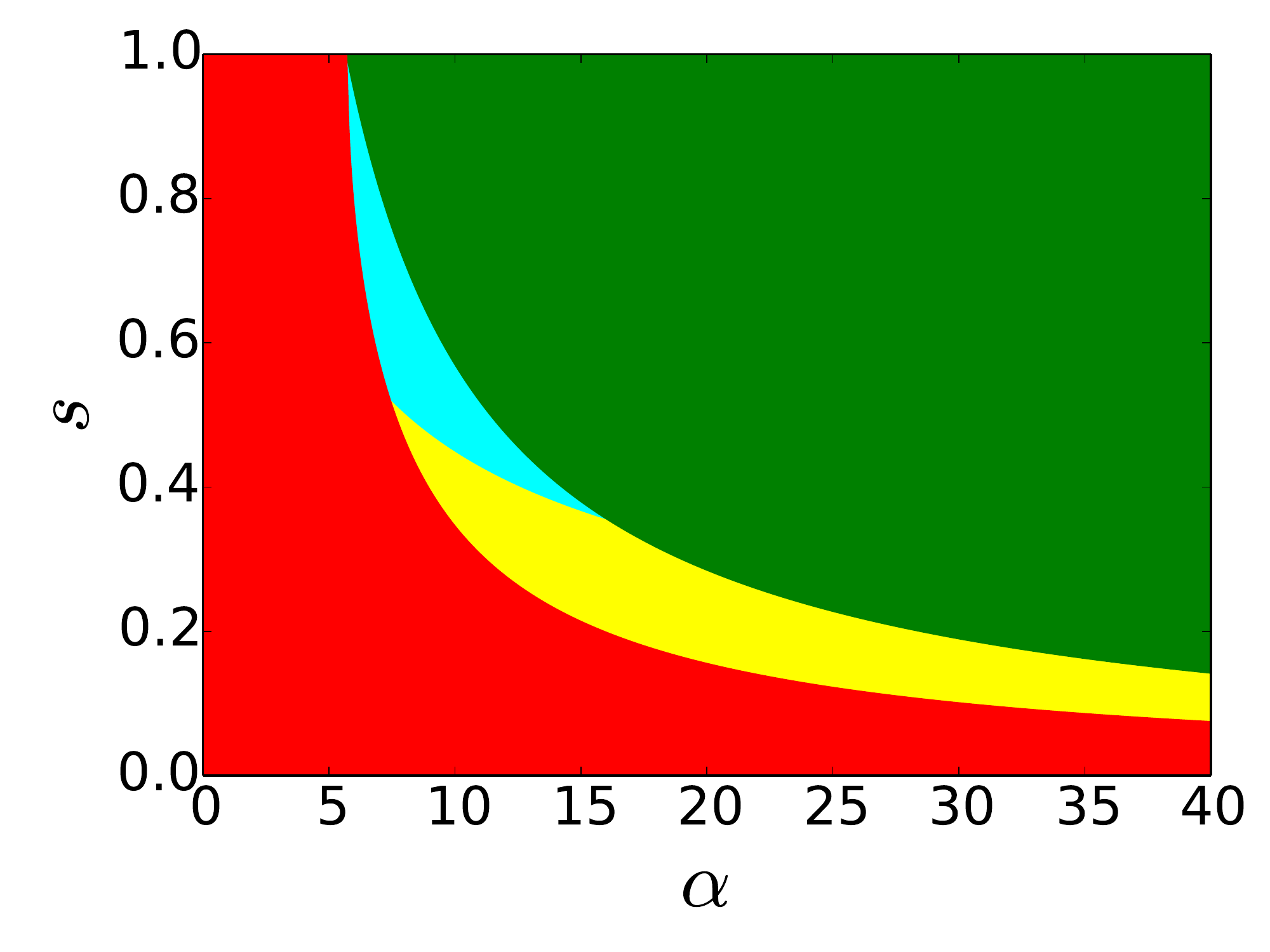}
        \caption{Fixed $\alpha/\beta = 6$.}
    \end{subfigure}
    \caption{Phase diagrams for exact community recovery for fixed $\alpha/\beta$, with $\alpha \in [0,40]$ and $s \in [0,1]$ on the axes. (Colors as in  Fig.~\ref{fig:phase_diagram_fixed_s}.)}
    \label{fig:phase_diagram_fixed_ratio}
\end{figure}

We remark that Theorem~\ref{thm:community_recovery_impossibility} provides a {\it partial converse} to the achievability result in Theorem~\ref{thm:community_recovery}: it is tight when $s^2 (\alpha + \beta)/2 > 1$, but the precise information-theoretic threshold is unknown when $s^2 (\alpha + \beta)/2 < 1$, which is the regime where exact graph matching fails. This leads to an interesting follow-up question: is exact graph matching {\it necessary} for the exact recovery of communities? 
We conjecture that it is not, which is formalized as follows. 

\begin{conjecture}
\label{conjecture:below_graph_matching}
There exists $\epsilon = \epsilon(\alpha, \beta, s) > 0$ such that if \eqref{eq:community_achievability} holds and 
\begin{equation}
s^2 \left( \frac{\alpha + \beta}{2} \right) \ge 1 - \epsilon,
\end{equation}
then there is an estimator $\wh{\boldsymbol{\sigma}} = \wh{\boldsymbol{\sigma}}(G_1, G_2)$ such that 
$
\lim\limits_{n \to \infty} \p ( \overlap ( \wh{\boldsymbol{\sigma}}, \boldsymbol{\sigma}) = 1 ) = 1.
$
\end{conjecture}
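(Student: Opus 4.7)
The plan is to combine a partial graph matching with a careful cleanup phase that handles the small number of vertices which cannot be uniquely matched. Although exact matching is impossible when $s^{2}(\alpha+\beta)/2<1$ by Theorem~\ref{thm:graph_matching_impossibility}, the obstruction comes from a vanishing fraction of vertices that are (near-)isolated in the intersection graph $G_{1}\cap G_{2}'$. In the regime $s^{2}(\alpha+\beta)/2\geq 1-\epsilon$ with $\epsilon>0$ small, the number of such problematic vertices is roughly $n^{\gamma(\epsilon)}$ with $\gamma(\epsilon)\to 0$ as $\epsilon\to 0$. My strategy is to (a) produce a partial matching $\wh\pi$ that correctly pairs every vertex outside an identifiable ``bad set'' $S$ of size at most $n^{\gamma(\epsilon)}$; (b) run an exact community recovery algorithm on the resulting partial union graph to obtain a preliminary label estimate $\wt{\boldsymbol\sigma}$ correct on $[n]\setminus S$; and (c) classify each vertex in $S$ by an augmented likelihood test that searches over possible matches in $G_{2}$.

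For step~(a), I would adapt the analysis from the proof of Theorem~\ref{thm:graph_matching}. The counting bounds on ``bad permutations'' (swaps and more general orbit structures) driving that proof are governed by isolation-type events in the intersection graph, so the maximum-agreement estimator $\wh\pi$ should still coincide with $\pi_{*}$ on every vertex whose local neighborhood in $G_{1}\cap G_{2}'$ is rich enough to certify a unique match. I would define $S$ as the data-identifiable set of vertices whose two-hop neighborhoods in the intersection graph fail this certification, prove $|S|\le n^{\gamma(\epsilon)}$ via a first-moment computation (matching the known estimate $n^{1-s^{2}(\alpha+\beta)/2+o(1)}$ for isolated vertices), and show $\wh\pi(v)=\pi_{*}(v)$ for every $v\notin S$ by arguing that candidate alternative matches for such $v$ must involve vertices in $S$ and are dominated in agreement count by the true matching.

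For step~(b), on $[n]\setminus S$ the partial union graph $\wh H$ agrees with $H_{*}\sim\mathrm{SBM}(n,\alpha(1-(1-s)^{2})\log n/n,\beta(1-(1-s)^{2})\log n/n)$. Since \eqref{eq:community_achievability} holds strictly, the Chernoff--Hellinger divergence of $H_{*}$ satisfies $D>1$, so the standard two-step exact recovery argument for the SBM yields a labeling $\wt{\boldsymbol\sigma}$ agreeing with $\boldsymbol\sigma$ on $[n]\setminus S$ up to a global sign flip, with probability $1-o(1)$. For step~(c), for each $v\in S$ and each candidate pair $(u,\sigma)$ with $u$ an unmatched vertex of $G_{2}$ and $\sigma\in\{\pm 1\}$, I would compute the log-likelihood of the combined $G_{1}$- and $G_{2}$-neighborhoods of $v$ under the hypothesis $\{\pi_{*}(v)=u,\,\sigma_{v}=\sigma\}$, given $\wt{\boldsymbol\sigma}$ on $[n]\setminus S$, and declare $\wh\sigma_{v}$ to be the $\sigma$ maximizing this quantity. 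The per-vertex error should decay like $n^{-D'}$ for some $D'=D'(\alpha,\beta,s)>0$, so a union bound gives total error at most $|S|\cdot n^{-D'}\le n^{\gamma(\epsilon)-D'}\to 0$ as long as $\epsilon$ is small enough that $\gamma(\epsilon)<D'$.

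The main obstacle is making step~(c) quantitative: one must show that the ``multiplicity penalty'' incurred by searching over $|S|$ candidate matches in $G_{2}$ does not overwhelm the divergence margin $D'$ in the neighborhood likelihood ratio. Since \eqref{eq:community_achievability} provides a strictly positive divergence margin $D-1$ that does not depend on $\epsilon$, this should be achievable by taking $\epsilon$ sufficiently small so that the polynomial blowup $n^{\gamma(\epsilon)}$ in candidate matches is dominated by the polynomial decay $n^{-D'}$ in per-candidate error probability. A secondary subtlety is ensuring that $S$ can be cleanly identified from the data and that mistakes of $\wh\pi$ are indeed confined to $S$; this may require iterating the matching step using $\wt{\boldsymbol\sigma}$ as side information, refining $\wh\pi$ and $\wt{\boldsymbol\sigma}$ jointly until a consistent estimate is obtained.
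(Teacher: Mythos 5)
This statement is a \emph{conjecture} in the paper, not a theorem: the text following it gives only a heuristic sketch of a possible attack and explicitly says that ``making the arguments above formal is a challenging task, so we leave it to future work.'' There is therefore no ``paper proof'' to compare against. Your proposal is similar in spirit to that sketch --- run the max-agreement matcher, exploit the fact that it should be correct on all but a small set, recover communities on the matched part, then patch up the remainder --- but differs in the cleanup phase: the paper suggests a majority vote over $\wh{H}$-neighbors in the correctly matched set $\cC$, while you propose a per-vertex likelihood test that searches jointly over the candidate match in $G_{2}$ and the community label. Your variant is arguably cleaner for the corrupted vertices, since it avoids the issue that a mismatched $\wh{\pi}(v)$ injects spurious $G_{2}$-edges into the $\wh{H}$-neighborhood of $v$ and could bias a naive vote.

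However, your outline has the same genuine gaps the paper already flags, plus a couple of its own. First, the set $\cC$ of correctly matched vertices is \emph{not} observable from $(G_{1},G_{2})$ alone, so you must replace it by a data-identifiable set $S$ that (i) provably contains all mismatches of $\wh{\pi}$ and (ii) has size $n^{o(1)}$; your proposal asserts that a two-hop certification in the intersection graph achieves both, but offers no argument that $\wh{\pi}$ is correct on every certified vertex --- the global maximizer could in principle mismatch a well-connected vertex if that frees up agreement elsewhere, and ruling this out is essentially the almost-exact-recovery step that the paper says must be proved (and cites only the Erd\H{o}s--R\'enyi analogue~\cite{cullina2020partial,wu2021settling}). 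Second, even granting (i)--(ii), $S$ (like $\cC$) is a random set depending on $(G_{1},G_{2},\pi_{*})$, so the claim that the restricted union graph is distributed as $\SBM(n,\alpha(1-(1-s)^{2})\log n/n,\beta(1-(1-s)^{2})\log n/n)$ on $[n]\setminus S$ involves a non-trivial conditioning/measurability argument that you, like the paper, do not supply; you also reuse the same edge data both to define $S$ and to run the step-(c) likelihood test, which introduces a dependence that a rigorous argument would need to sever (e.g., by sample splitting). Finally, the ``multiplicity penalty'' you flag in step (c) is indeed the quantitative crux, and you leave it at the level of ``should be achievable''; this matches the paper's own assessment that this is an open problem rather than a routine verification.
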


In words, we believe that the communities can be exactly recovered even in regimes where exact graph matching is information-theoretically impossible. We outline a possible way to prove this conjecture. The algorithm we shall use is the same one used in the proof of Theorem \ref{thm:community_recovery}: we compute~$\wh{\pi}$, the permutation which maximizes the number of agreeing edges across $G_1$ and $G_2$, and then run an optimal community recovery algorithm on the union graph $\wh{H} = G_1 \lor_{\wh{\pi}} G_2$. Define the {\it correctly-matched region} $\cC : = \{ i \in [n] : \wh{\pi}(i) = \pi_*(i) \}$. When $s^2 (\alpha + \beta)/2 < 1$, 
we have that
$\cC \neq [n]$ 
with high probability. 
However, we expect that $| \cC | = (1 - o(1)) n$; that is, $\wh{\pi}$ coincides with $\pi_*$ on all but a negligible fraction of vertices (which is known as almost exact recovery). This is the case in correlated Erd\H{o}s-R\'{e}nyi graphs \cite{cullina2020partial,wu2021settling}, so we expect it to hold for correlated SBMs as well. Let $\wh{H}_{\cC}$ be the subgraph of $\wh{H}$ restricted to the vertices in $\cC$. Since all vertices in $\cC$ have been correctly matched, we expect that (possibly in an {\it approximate} sense)
\begin{equation}
\label{eq:Hc}
\wh{H}_{\cC} \sim \mathrm{SBM} \left( | \cC|, \alpha (1 - (1 - s)^2) \frac{\log n }{n}, \beta (1 - (1 - s)^2) \frac{\log n}{n} \right).
\end{equation}
In particular, if \eqref{eq:community_achievability} holds, the communities of vertices in $\cC$ can be {\it exactly} recovered. For vertices not in $\cC$, note that most of the neighbors will be elements of $\cC$, which will have correct community labels. If $\alpha > \beta$, then the true community label of a given vertex is the same as the true label of most neighbors with high probability (when $\alpha < \beta$, the reverse is true) \cite{abbe2016exact}, hence the community labels of vertices not in $\cC$ can be correctly identified using a majority vote. 

Making the arguments above formal is a challenging task. For one, though we may expect~\eqref{eq:Hc} to hold if $\cC$ is a {\it fixed} set, it is in fact a {\it random} set depending on $G_1$, $G_2$, and $\pi_{*}$, so formally proving~\eqref{eq:Hc} requires a careful analysis. Moreover, we would like to use \eqref{eq:Hc} to argue that running a community recovery algorithm on $\wh{H}$ (rather than $\wh{H}_{\cC}$) perfectly recovers the communities in $\cC$. Rigorously justifying these points requires significant effort, so we leave it to future work.

\subsubsection{Multiple correlated stochastic block models}
\label{sub:multiple_community_recovery}

We next describe our results on how one can recover communities using $K$ correlated stochastic block models, again using graph matching as a subroutine. 
Considering more than two networks is more and more important in many applications, 
for instance in computational biology, 
where the increasing number of species for which protein-protein interaction networks are available can be leveraged 
for more powerful comparative studies~\cite{singh2008global,kazemi2019mpgm}. 

Formally, we construct $(G_1, \ldots, G_K) \sim \mathrm{CSBM}(n, p, q, s, K)$ as follows. 
First, generate a parent graph $G \sim \SBM(n,p,q)$, and let $\boldsymbol{\sigma}$ denote the community labels. 
Next, given~$G$, we construct $G_{1}$ as well as $G_{2}', \ldots, G_{K}'$ by independently subsampling $G$ with probability $s$. 
Finally, we let $\pi_{*}^{2}, \ldots, \pi_{*}^{K}$ be i.i.d.\ uniformly random permutations of $[n]$, independent of everything else, 
and for $2 \leq k \leq K$, 
we generate $G_{k}$ by relabeling the vertices of $G_{k}'$ according to $\pi_{*}^{k}$.

As in the case of two correlated graphs, the achievability and impossibility results depend on the structure of the union graph with respect to the true permutations $\pi_*^2, \ldots, \pi_*^K$. 

\begin{theorem}
\label{thm:community_recovery_K} 
Let $(G_1, \ldots, G_K) \sim \mathrm{CSBM} \left( n, \frac{\alpha \log n}{n}, \frac{\beta \log n}{n}, s, K  \right)$. 
Suppose that $s^{2} \left( \alpha + \beta \right) / 2 > 1$ and 
\begin{equation}
\label{eq:K_community_recovery}
| \sqrt{\alpha} - \sqrt{\beta} | > \sqrt{ \frac{2}{1 - (1 - s)^K} }.
\end{equation}
Then there is an estimator $\wh{\boldsymbol{\sigma}} = \wh{\boldsymbol{\sigma}}(G_1, \ldots, G_K)$ such that 
$
\lim\limits_{n \to \infty} \p ( \overlap(\wh{\boldsymbol{\sigma}}, \boldsymbol{\sigma}) = 1 ) = 1.
$
\end{theorem}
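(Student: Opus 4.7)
The plan is to extend the two-graph strategy from the proof of Theorem~\ref{thm:community_recovery} in the obvious way: first recover each unknown permutation by pairwise graph matching against $G_{1}$, then take the union of all the realigned graphs and run an optimal SBM community recovery algorithm on the result. The only distributional observation that needs to be verified is that, for each fixed $k \in \{2, \ldots, K\}$, the pair $(G_{1}, G_{k})$ has the same joint law as a two-graph $\mathrm{CSBM}(n, \alpha \log n / n, \beta \log n / n, s)$. This is immediate from the construction: $G_{1}$ and $G_{k}'$ are obtained by independently subsampling the same parent graph $G$ with probability $s$, and $G_{k}$ is the relabeling of $G_{k}'$ by the uniformly random permutation $\pi_{*}^{k}$, which is independent of everything else; the remaining graphs $(G_{j})_{j \neq 1,k}$ can be marginalized out since they too depend on $G$ only through independent subsampling.

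Given this, for each $k \in \{2, \ldots, K\}$ I would let $\wh{\pi}^{k}(G_{1},G_{k})$ be the vertex mapping that maximizes the number of agreeing edges between $G_{1}$ and $G_{k}$. Since $s^{2}(\alpha+\beta)/2 > 1$, Theorem~\ref{thm:graph_matching} yields $\p(\wh{\pi}^{k} \neq \pi_{*}^{k}) \to 0$, and a union bound gives $\p(\wh{\pi}^{k} = \pi_{*}^{k} \text{ for all } k) \to 1$. Next, define the empirical union graph $\wh{H} := G_{1} \lor_{\wh{\pi}^{2}} G_{2} \lor_{\wh{\pi}^{3}} \cdots \lor_{\wh{\pi}^{K}} G_{K}$, in which $(i,j)$ is an edge if and only if $(i,j) \in G_{1}$ or $(\wh{\pi}^{k}(i), \wh{\pi}^{k}(j)) \in G_{k}$ for some $k \geq 2$. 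On the event that all the recovered permutations are correct, $\wh{H}$ coincides with the oracle union graph $H_{*} := G_{1} \lor_{\pi_{*}^{2}} G_{2} \lor \cdots \lor_{\pi_{*}^{K}} G_{K}$, which is simply the subgraph of the parent $G$ consisting of edges surviving into at least one of $G_{1}, G_{2}', \ldots, G_{K}'$. Since each edge survives with probability $1 - (1-s)^{K}$ independently across vertex pairs,
\[
H_{*} \sim \mathrm{SBM}\!\left( n, \, \alpha(1 - (1-s)^{K}) \tfrac{\log n}{n}, \, \beta(1 - (1-s)^{K}) \tfrac{\log n}{n} \right).
\]

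Finally, I would run any known optimal exact recovery algorithm for the SBM on $\wh{H}$ and denote its output by $\wh{\boldsymbol{\sigma}}(\wh{H})$. Assumption~\eqref{eq:K_community_recovery} is precisely the classical exact recovery threshold~\eqref{eq:community_achievability_threshold} applied to the parameters of $H_{*}$, so $\p(\overlap(\wh{\boldsymbol{\sigma}}(H_{*}),\boldsymbol{\sigma}) \neq 1) \to 0$. Combining everything via
\[
\p(\overlap(\wh{\boldsymbol{\sigma}}(\wh{H}), \boldsymbol{\sigma}) \neq 1) \leq \p(\overlap(\wh{\boldsymbol{\sigma}}(H_{*}), \boldsymbol{\sigma}) \neq 1) + \sum_{k=2}^{K} \p(\wh{\pi}^{k} \neq \pi_{*}^{k})
\]
and sending $n \to \infty$ concludes the proof. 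There is no genuinely hard step here beyond what is already settled by Theorem~\ref{thm:graph_matching}: the only mild subtlety is confirming the pairwise CSBM marginals mentioned above, and checking that a union bound over $k = 2, \ldots, K$ (with $K$ fixed) is harmless. Everything else is a direct generalization of the two-graph argument.
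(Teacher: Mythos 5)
Your proposal is correct and follows essentially the same approach as the paper's proof: pairwise graph matching of each $G_{k}$ against $G_{1}$, forming the union graph with respect to the estimated permutations, identifying the distribution of the oracle union graph $H_{*}$, and combining via the same union bound. The only cosmetic difference is that you explicitly spell out the observation that $(G_{1},G_{k})$ has the marginal law of a two-graph CSBM (needed to invoke Theorem~\ref{thm:graph_matching}), which the paper leaves implicit.
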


Analogously to Theorem~\ref{thm:community_recovery}, 
Theorem~\ref{thm:community_recovery_K} establishes the existence of a region of the parameter space where 
(i) there exists an algorithm that can exactly recover the communities using 
all of $G_{1}, G_{2}, \ldots, G_{K}$, 
but 
(ii) it is information-theoretically impossible to do so using only a strict subset of $G_{1}, G_{2}, \ldots, G_{K}$.

Our next result establishes an impossibility result which is analogous to Theorem \ref{thm:community_recovery_impossibility}. 

\begin{theorem}
\label{thm:K_community_recovery_impossibility}
Let $(G_1, \ldots, G_K) \sim \mathrm{CSBM} \left( n, \alpha \frac{\log n}{n}, \beta \frac{\log n}{n}, s, K \right)$ and suppose that 
\begin{equation}
\label{eq:K_community_impossibility}
| \sqrt{\alpha} - \sqrt{\beta}| < \sqrt{ \frac{2}{1 - (1 - s)^K} }.
\end{equation}
Then for any estimator $\widetilde{\boldsymbol{\sigma}} = \widetilde{\boldsymbol{\sigma}}(G_1, G_2)$, we have that 
$
\lim\limits_{n \to \infty} \p ( \overlap(\widetilde{\boldsymbol{\sigma}}, \boldsymbol{\sigma}) = 1 ) = 0.
$
\end{theorem}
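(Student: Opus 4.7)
The plan is to mirror the simulation argument used in the proof of Theorem \ref{thm:community_recovery_impossibility}, now with the union of all $K$ graphs playing the role that $H_*$ played before. Concretely, I would define
\[
H_{*} := G_{1} \lor_{\pi_{*}^{2}} G_{2} \lor_{\pi_{*}^{3}} \cdots \lor_{\pi_{*}^{K}} G_{K},
\]
so that an edge $(i,j)$ is in $H_{*}$ iff it is an edge of $G_{1}$ or of $G_{k}'$ for some $k \geq 2$ (where $G_k'$ is the pre-permutation subsampled copy). An edge of the parent graph $G$ survives into $H_{*}$ iff it is retained in at least one of the $K$ independent subsampling steps, which happens with probability $1-(1-s)^{K}$. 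Hence, conditionally on $\boldsymbol{\sigma}$,
\[
H_{*} \sim \mathrm{SBM}\!\left(n,\; \alpha(1-(1-s)^{K}) \tfrac{\log n}{n},\; \beta(1-(1-s)^{K}) \tfrac{\log n}{n}\right).
\]
Setting $\alpha' := \alpha(1-(1-s)^{K})$ and $\beta' := \beta(1-(1-s)^{K})$, the assumption \eqref{eq:K_community_impossibility} is exactly equivalent to $|\sqrt{\alpha'}-\sqrt{\beta'}| < \sqrt{2}$, so by the classical SBM impossibility result \cite{abbe2016exact, mossel2016consistency,abbe2015community,Abbe_survey} no estimator of $\boldsymbol{\sigma}$ based on $H_{*}$ achieves $\overlap = 1$ with nonvanishing probability.

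Next, I would show that $(G_{1}, \ldots, G_{K})$ can be simulated from $H_{*}$ alone, without any knowledge of $\boldsymbol{\sigma}$. The recipe is as follows: for each edge $(i,j) \in H_{*}$, independently draw a non-empty subset $S_{ij} \subseteq [K]$ with
\[
\P(S_{ij}=S) \;=\; \frac{s^{|S|}(1-s)^{K-|S|}}{1-(1-s)^{K}}, \qquad \varnothing \neq S \subseteq [K],
\]
and declare $(i,j)$ to be an edge of $G_{1}$ (resp.\ of $G_{k}'$ for $k \geq 2$) iff $1 \in S_{ij}$ (resp.\ $k \in S_{ij}$); for $(i,j) \notin H_{*}$, make $(i,j)$ a non-edge of each $G_{k}'$. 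Then draw $\pi_{*}^{2}, \ldots, \pi_{*}^{K}$ i.i.d.\ uniform permutations and set $G_{k}$ to be $G_{k}'$ relabeled by $\pi_{*}^{k}$. A direct check against the generative model of $\mathrm{CSBM}(n,p,q,s,K)$ shows that the resulting tuple has exactly the conditional law of $(G_{1}, \ldots, G_{K})$ given $H_{*}$, and this construction is measurable with respect to $H_{*}$ and fresh independent randomness only, not $\boldsymbol{\sigma}$.

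Given this, the argument concludes by contradiction: if some $\widetilde{\boldsymbol{\sigma}}(G_{1}, \ldots, G_{K})$ satisfied $\limsup_{n\to\infty} \P(\overlap(\widetilde{\boldsymbol{\sigma}}, \boldsymbol{\sigma}) = 1) > 0$, then composing $\widetilde{\boldsymbol{\sigma}}$ with the simulator yields an estimator $\widetilde{\boldsymbol{\sigma}}'(H_{*})$ with the same success probability, contradicting the SBM impossibility applied to $H_{*}$. Equivalently, $\boldsymbol{\sigma} \to H_{*} \to (G_{1}, \ldots, G_{K})$ is a Markov chain, so the data-processing inequality rules out any gain from observing the full $K$-tuple over $H_{*}$. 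I expect the only genuinely delicate step to be the verification that the proposed simulator really reproduces the joint law of $(G_{1}, \ldots, G_{K})$, since one must be careful that the independent subset-sampling on edges of $H_{*}$ faithfully reproduces the joint distribution of which subsampled copies retained each edge of the parent $G$; this reduces, however, to the elementary fact that conditional on being retained in the union, each edge's inclusion pattern across the $K$ copies is i.i.d.\ with the stated truncated product measure.
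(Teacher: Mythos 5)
Your proposal is correct and takes essentially the same approach as the paper: define the union graph $H_{*}$, observe it is a single SBM with subsampling probability $1-(1-s)^{K}$, and then show $(G_{1},\dots,G_{K})$ can be simulated from $H_{*}$ (together with fresh randomness for the inclusion patterns and the permutations) without access to $\boldsymbol{\sigma}$, reducing to the single-graph impossibility result. The paper phrases this as constructing a fresh $H\sim\mathrm{SBM}$ and building $(H_{1},\dots,H_{K})$ from it with the same joint law as $(G_{1},\dots,G_{K},\boldsymbol{\sigma})$, whereas you phrase it via the Markov chain $\boldsymbol{\sigma}\to H_{*}\to(G_{1},\dots,G_{K})$ and data processing; these are the same argument, and the key verification you flag (that conditional on an edge being present in $H_{*}$, its inclusion pattern across the $K$ subsamples is i.i.d.\ with the truncated product measure $r_{x}=s^{|x|}(1-s)^{K-|x|}/(1-(1-s)^{K})$, independently of $\boldsymbol{\sigma}$) is exactly the point the paper checks.
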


We highlight a few interesting aspects of Theorems~\ref{thm:community_recovery_K} and~\ref{thm:K_community_recovery_impossibility}. As in the two-graph case, Theorem~\ref{thm:K_community_recovery_impossibility} provides a {\it partial} converse to the achievability result in Theorem~\ref{thm:community_recovery_K}: it is tight in the regime $s^2 (\alpha + \beta)/2 > 1$, but the correct threshold remains unknown when $s^2 (\alpha + \beta)/2 < 1$. Additionally, as $K$ increases, the achievability and impossibility conditions in \eqref{eq:K_community_recovery} and \eqref{eq:K_community_impossibility} converge to the conditions $|\sqrt{\alpha} - \sqrt{\beta}| > \sqrt{2}$ and $|\sqrt{\alpha} - \sqrt{\beta}| < \sqrt{2}$, which are the information-theoretic conditions for achievability and impossibility of community recovery in the {\it parent graph}~$G$. In words, the more correlated graphs we observe, the less information is lost when generating the observed graphs from the parent graph via the sampling process.

\subsection{Related work}\label{sec:related_work}

Our work naturally draws upon techniques in the graph matching literature as well as the community recovery literature. Here, we elaborate on relevant work in these fields that were not covered during the exposition of our model and main results. 

{\bf Graph Matching.} Most of the theoretical literature on graph matching has focused on correlated Erd\H{o}s-R\'{e}nyi random graphs, which was introduced by Pedarsani and Grossglauser~\cite{pedarsani2011privacy}. Significant progress has been made in recent years in characterizing the fundamental information-theoretic limits for recovering the latent vertex correspondence $\pi_*$. Cullina and Kiyavash~\cite{cullina2016improved,cullina2018exact} first derived the precise information-theoretic conditions for {\it exact} recovery of $\pi_*$ for sparse graphs (in a sublinear-degree regime), and recently Wu, Xu, and Yu \cite{wu2021settling} refined this to include linear degree regimes. 
Our results, in particular Theorems~\ref{thm:graph_matching} and~\ref{thm:graph_matching_impossibility}, 
are the natural generalizations of these previous works to correlated SBMs, determining the precise information-theoretic threshold for exact recovery in this setting (and improving upon~\cite{onaran2016optimal,cullina2016simultaneous}).

Weaker notions of recovery (e.g., almost exact recovery, partial recovery) have also been addressed for correlated Erd\H{o}s-R\'{e}nyi graphs (see \cite{cullina2020partial,ganassali2020tree,hall2020partial,ganassali2021impossibility,wu2021settling} for more details).  
Recent work by Shirani, Erkip, and Garg~\cite{shirani2021concentration}  provides necessary and sufficient conditions for almost exact recovery in correlated SBMs.  
Our work is also a part of the growing literature studying correlated random graphs beyond the Erd\H{o}s-R\'{e}nyi model~\cite{onaran2016optimal,cullina2016simultaneous,korula2014efficient,RS20,shirani2021concentration,yu2021power}.

A major open question is whether there exist \emph{efficient algorithms} for inferring $\pi_{*}$ in correlated Erd\H{o}s-R\'{e}nyi graphs. In particular, the estimators which are known to succeed up to the information-theoretic threshold are usually given by the solution to a combinatorial optimization problem, for which a brute force search takes $O(n!)$ time. Significant improvements were recently made by~\cite{mossel2019seeded, barak2019}, who provided $n^{O(\log n)}$ time algorithms for exactly recovering $\pi_*$. For values of $s$ close to 1, recent work provides polynomial-time algorithms for exact recovery~\cite{ding2021efficient, fan2020spectral, mao2021random}.

{\bf Community Recovery in Multi-layer SBMs.} 
We briefly review the literature on multi-layer SBMs, as it is the form of side information studied in the literature that is closest to our work. 
Multi-layer SBMs were first introduced by Holland, Laskey, and Leinhardt, in their original work that introduced stochastic block models~\cite{HLL83}. 
In this model, first a community labeling is chosen at random. 
Given the block structure, a collection of SBMs on the same vertex set with the same latent community labels are then generated, one for each layer, possibly with different (but known) edge formation probabilities. Variants of this model have been explored by several authors~\cite{han2015consistent,arroyo2020inference, paul2020null, paul2020spectral, lei2019consistent, ali2019latent, bhattacharya2020consistent}, 
but typically the layers are \emph{conditionally independent} given the community labels.  The works \cite{mayya2019mutual, ma2021community} additionally consider node-level information that is correlated with the latent community membership. 
While our work also considers multiple networks as side information, we emphasize that there are significant differences. 
For one, the networks we consider are not conditionally independent given the latent communities, but are also correlated through the formation of edges. 
Moreover, in the multi-layer setting the node labels are known, which completely removes the need for graph matching.

\subsection{Overview of graph matching proofs}\label{sec:proofs_overview}

\textbf{Achievability of exact graph matching: Proof sketch of Theorem~\ref{thm:graph_matching}.} 
Let $\cF_{\epsilon} := \left\{ (1-\epsilon)n/2 \leq |V_{+}|, |V_{-}| \leq (1+\epsilon)n/2 \right\}$ denote the event that the two communities are approximately balanced. 
Since the community labels are i.i.d.\ uniform, we have for any fixed $\epsilon > 0$ that $\p \left( \cF_{\epsilon} \right) = 1 - o(1)$  as $n \to \infty$; 
we may thus condition on $\cF_{\epsilon}$. 
Let $S_{k_1, k_2}$ be the set of permutations which mismatches $k_1$ vertices in $V_{+}$ and $k_2$ vertices in $V_{-}$. 
We show that 
if~\eqref{eq:recovery_condition} holds, 
then there exists $\epsilon = \epsilon(\alpha, \beta, s)$ sufficiently small so that
\begin{equation}
\label{eq:graph_matching_prob_bound}
\p \left( \wh{\pi} \in S_{k_1, k_2} \, \middle| \, \cF_{\epsilon} \right) \le n^{- \epsilon(k_1 + k_2)}.
\end{equation}
To bound the probability that $\wh{\pi} \neq \pi_*$, we then take a union bound over all the events $\{\wh{\pi} \in S_{k_1, k_2} \}$ such that $k_1 + k_2 \ge 1$, that is, there is at least one mismatched vertex, 
concluding the proof. 

The key technical result which enables the proof is \eqref{eq:graph_matching_prob_bound}; this is derived by deriving tight bounds for the generating function corresponding to the number of agreeing edges in $G_1$ and $G_2$ with respect to a given permutation. 
In prior work on the graph matching problem in correlated Erd\H{o}s-R\'{e}nyi graphs, 
as well as for correlated Gaussian matrices, 
the aforementioned generating functions could be exactly computed~\cite{cullina2016improved, cullina2018exact, wu2021settling}. 
An important difference between work on these models and ours is that the stochastic block model is {\it heterogeneous}: the probability of edge formation is not i.i.d.\ over all vertex pairs, but varies depending on the latent community labels of the vertex pairs. As a result, the generating functions of interest cannot be explicitly computed. To handle this heterogeneity, we develop new techniques for bounding these generating functions. Specifically, we derive recursive bounds for the generating functions of interest as a function of the number of vertices; see Section~\ref{sec:gen_fn_analysis} for details. We suspect that this method can be extended to analyze other classes of correlated networks with heterogeneous structure. 


\textbf{Impossibility of exact graph matching: Proof sketch of Theorem~\ref{thm:graph_matching_impossibility}.} 
Let $H$ be the intersection graph between $G_1$ and $G_2'$, that is, $(i,j)$ is an edge in $H$ if and only if $(i,j)$ is an edge in $G_1$ and $G_2'$. Equivalently, $(i,j)$ must be an edge in the parent graph $G$ and must be included in both $G_1$ and $G_2'$. 
Since the probability of the latter event is $s^2$, we see that 
$H \sim \mathrm{SBM} \left( n, \alpha s^2 \log(n)/n, \beta s^2 \log(n)/n \right)$.
If $s^2 (\alpha + \beta)/2 < 1$, then $H$ is not connected with probability tending to 1 as $n \to \infty$. In particular, $H$ has many singletons in this regime, 
which are vertices that have {\it non-overlapping neighborhoods} in $G_1$ and $G_2'$. 
Due to the lack of shared information, it is difficult to match such vertices across the two graphs, even for optimal estimators that have access to the ground-truth community labeling $\boldsymbol{\sigma}$. 
In particular, one can show that 
the maximum a posteriori (MAP) estimator of $\pi_*$ given $G_1$, $G_2$, \emph{and} $\boldsymbol{\sigma}$ 
cannot output $\pi_*$ with probability bounded away from zero, 
so neither can any other estimator. 



\subsection{Discussion and future work}

In this work, we studied the problem of exact community recovery given multiple correlated SBMs as side information. Specifically, our goal was to understand how this side information changes the fundamental information-theoretic threshold for achievability and impossibility of exact community recovery. Strikingly, using multiple correlated SBMs allows one to exactly recover communities in regimes where it is information-theoretically impossible to do so using a single graph. 

Precisely, we determine the sharp information-theoretic condition for exact graph matching in a pair of correlated SBMs. We then apply this to determine conditions for achievability and impossibility of exact community recovery. In the regime where exact graph matching is achievable, we identify the precise information-theoretic conditions for achievability and impossibility of exact community recovery. We also discuss extensions with $K \ge 2$ correlated SBMs. 

Our work leaves open several important avenues for future work, 
which we outline below.

\begin{itemize}
\item 
{\bf Closing the information-theoretic gaps in exact community recovery.} Together, Theorems~\ref{thm:community_recovery} and~\ref{thm:community_recovery_impossibility} show that in the regime $s^2 (\alpha + \beta) / 2 > 1$, we have identified the  information-theoretic threshold between impossibility and achievability for exact community recovery. However, we do not have achievability results for the regime $s^2 (\alpha + \beta)/2 < 1$, since exact graph matching is not possible in this case. This leads to the following natural question which is formalized in Conjecture \ref{conjecture:below_graph_matching}: {\it is exact graph matching needed for exact community recovery?} We believe the answer is {\it no}; we expect that showing this rigorously will lead to new algorithms for jointly synthesizing networks and identifying communities. 


\item 
 {\bf Efficient algorithms.} Our achievability algorithms rely on graph matching as a subroutine, which is computationally expensive. 
Do there exist {\it efficient} algorithms for graph matching in the correlated SBM model? If not, is it possible to recover communities exactly using a {\it polynomial-time relaxation} of the graph matching subroutine?

\item 
{\bf General correlated stochastic block models.} For simplicity of exposition, we focused on the simplest setting of the stochastic block model where there are two balanced communities. A natural future direction is to extend our results to account for more general SBMs with multiple communities (which are understood well in the single graph setting~\cite{Abbe_survey}). 

\item 
{\bf Beyond exact community recovery.} Besides exact recovery, natural notions of community recovery include almost exact recovery, where the goal is to recover all but a negligible fraction of community labels, and partial recovery, where the goal is 
to do better than a random labeling. 
Using correlated networks as side information to accomplish these tasks is a natural and exciting direction.
A key challenge is that in the regimes where phase transitions occur for almost exact and partial recovery (see~\cite{Abbe_survey}), exact graph matching is information-theoretically impossible by Theorem~\ref{thm:graph_matching_impossibility}, hence this cannot be used as a black box. Solving this problem will lead to new methods for community detection based on data from multiple networks. 
\end{itemize}

\subsection{Notation}
\label{sec:notation}

Recall that the underlying vertex set is $V = [n] := \left\{ 1, 2, \ldots, n \right\}$. 
We denote by $\cS_{n}$ the set of permutations of $[n]$. 
Recall that $V_{+} := \left\{ i \in \left[ n \right] : \sigma_{i} = +1 \right\}$ and $V_{-} := \left\{ i \in \left[ n \right] : \sigma_{i} = -1 \right\}$ denote the vertices in the two communities. 

Let $\cE : = \{ \{i,j\} : i,j \in [n], i \neq j \}$ denote the set of all unordered vertex pairs. We will use $(i,j)$, $(j,i)$, and $\{i,j\}$ interchangeably to denote the unordered pair consisting of $i$ and $j$. 
Given 
$\boldsymbol{\sigma}$, we also define the sets 
$\cE^+(\boldsymbol{\sigma}) : = \left \{ (i,j) \in \cE : \sigma_i \sigma_j = +1 \right \}$ 
and 
$\cE^-(\boldsymbol{\sigma} ) : = \left \{ (i,j) \in \cE : \sigma_i \sigma_j = -1 \right \}$. 
In words, $\cE^+(\boldsymbol{\sigma})$ is the set of {\it intra-community} vertex pairs, and $\cE^-(\boldsymbol{\sigma})$ is the set of {\it inter-community} vertex pairs. Note in particular that $\cE^+(\boldsymbol{\sigma})$ and $\cE^-(\boldsymbol{\sigma})$ partition $\cE$. 

We next introduce some notation pertaining to the construction of the correlated SBMs. Let $A$ be the adjacency matrix of $G_1$, let $B$ be the adjacency matrix of $G_2$, and let $B'$ be the adjacency matrix of $G_{2}'$. 
Note that, by construction, we have that $B'_{i,j} = B_{\pi_{*}(i),\pi_{*}(j)}$ for every $i,j$.
By the construction of the correlated SBMs, we have the following probabilities for every $(i,j) \in \cE$: 
\begin{align*}
\p \left( \left( A_{i,j}, B'_{i,j} \right) = (1,1) \, \middle| \, \boldsymbol{\sigma} \right) 
&= \begin{cases}
s^2 p &\text{if } \sigma_i = \sigma_{j}, \\
s^2 q &\text{if } \sigma_i \neq \sigma_{j};
\end{cases}\\
\p \left( \left( A_{i,j}, B'_{i,j} \right) = (1,0) \, \middle| \, \boldsymbol{\sigma} \right) 
&= \begin{cases}
s(1-s) p &\text{if }  \sigma_i = \sigma_{j}, \\
s(1-s) q &\text{if }  \sigma_i \neq \sigma_{j};
\end{cases}\\
\p \left( \left( A_{i,j}, B'_{i,j} \right) = (0,1) \, \middle| \, \boldsymbol{\sigma} \right) 
&= \begin{cases}
s(1-s) p &\text{if }  \sigma_i = \sigma_{j},  \\
s(1-s) q &\text{if }  \sigma_i \neq \sigma_{j};
\end{cases}\\
\p \left( \left( A_{i,j}, B'_{i,j} \right) = (0,0) \, \middle| \, \boldsymbol{\sigma} \right) 
&= \begin{cases}
1 - p(2s - s^2) &\text{if }  \sigma_i = \sigma_{j},  \\
1 - q(2s - s^2) &\text{if }  \sigma_i \neq \sigma_{j}.
\end{cases}
\end{align*}
For brevity, for $i,j \in \{0,1\}$ we write 
\[
p_{ij} : = \p \left( \left( A_{1,2}, B'_{1,2} \right) = (i,j) \, \middle| \, \boldsymbol{\sigma} \right) \qquad \text{if $\sigma_{1} = \sigma_{2}$}
\]
and 
\[
q_{ij} : = \p \left( \left( A_{1,2}, B'_{1,2} \right) = (i,j) \, \middle| \, \boldsymbol{\sigma} \right) \qquad \text{if $\sigma_{1} \neq \sigma_{2}$}.
\] 
For an event $\cA$, we denote by $\mathbf{1} \left( \cA \right)$ the indicator of $\cA$, which is $1$ if $\cA$ occurs and $0$ otherwise. 

\subsection{Outline}

The rest of this paper is organized as follows. 
In Section~\ref{sec:graph_matching_proof} we prove our main result, Theorem~\ref{thm:graph_matching}. 
Section~\ref{sec:impossibility_graph_matching} contains a proof of Theorem~\ref{thm:graph_matching_impossibility}. 
In Section~\ref{sec:impossibility_community_recovery} we prove Theorem~\ref{thm:community_recovery_impossibility}, 
and finally Section~\ref{sec:proofs_many_corr_SBMs} contains the proofs of Theorems~\ref{thm:community_recovery_K} and~\ref{thm:K_community_recovery_impossibility}. 


\section{Exact graph matching for correlated SBMs: achievability}
\label{sec:graph_matching_proof}

In this section we prove Theorem~\ref{thm:graph_matching}. 
Recall that our objective is to find the ground truth permutation $\pi_{*}$. 
To this end, we study an estimator $\wh{\pi}$ which maximizes the number of agreeing edges in the two graphs, 
that is, the number of pairs of vertices connected in both. In other words, letting $A$ denote the adjacency matrix of $G_{1}$ and $B$ denote the adjacency matrix of $G_{2}$, the estimator is given by 
\begin{equation}
\label{eq:matching_estimator_v1}
\widehat{\pi}(G_1, G_2) \in \argmax_{\pi \in \cS_{n}} \sum\limits_{(i,j) \in \cE} A_{i,j} B_{\pi(i), \pi(j)}.
\end{equation}
When this estimator is not uniquely defined, that is, when the argmax set above is not a singleton, $\wh{\pi}(G_1, G_2)$ is chosen to be an arbitrary element of the argmax set. 

\begin{definition}[Lifted permutation]
For a permutation $\pi \in \cS_{n}$ on the vertices, define the corresponding \emph{lifted permutation} $\tau : \cE \to \cE$ on vertex pairs as $\tau((i,j)) := (\pi(i), \pi(j))$. As a shorthand, we write $\tau = \ell(\pi)$, and thus also $\tau_* : = \ell(\pi_*)$ and $\wh{\tau} := \ell(\wh{\pi})$. 
\end{definition}
Note that if a permutation $\pi$ maps two vertices to each other, then the lifted permutation $\tau = \ell(\pi)$ maps this (unordered) pair of vertices to itself; 
that is, if $\pi(1) = 2$ and $\pi(2) = 1$, then $\tau((1,2)) = (2,1) = (1,2)$. 
Observe that there is a one-to-one mapping between permutations on vertices (i.e., $\cS_{n}$) and lifted permutations. For this reason, finding the ground truth permutation~$\pi_{*}$ is equivalent to finding the ground truth lifted permutation $\tau_{*}$. 
Similarly, conditioning on $\pi_{*}$ is equivalent to conditioning on $\tau_{*}$. 

Using this notation, we can rewrite \eqref{eq:matching_estimator_v1} as 
\begin{equation}
\label{eq:matching_estimator_v2}
\wh{\pi}(G_1, G_2) \in 
\argmax_{\pi \in \cS_{n}} \sum\limits_{e \in \cE} A_{e} B_{\tau(e)},
\end{equation}
where $\tau = \ell(\pi)$, and $A_e = A_{i,j}$ if $e = (i,j)$. 
For a lifted permutation~$\tau$ define 
\[
X(\tau) := \sum\limits_{e \in \cE} A_{e} B_{\tau_{*}(e)} - \sum\limits_{e \in \cE} A_{e} B_{\tau(e)} = \sum\limits_{e \in \cE : \tau(e) \neq \tau_*(e)} \left(A_{e} B_{\tau_{*}(e)} - A_{e} B_{\tau(e)} \right).
\]
Observe that $X(\tau_{*}) = 0$ and that $\wh{\pi}(G_{1}, G_{2}) \in \argmin_{\pi \in \cS_{n}} X(\ell(\pi))$. 
Therefore this estimator is correct---that is, $\wh{\pi}(G_1, G_2) = \pi_*$---if for every lifted permutation $\tau \neq \tau_*$ we have that 
$X(\tau) > 0$. 
Conditioning on $\pi_{*}$ we thus have that 
\[
\p (\wh{\pi} \neq \pi_* ) 
\le \p (\exists\, \pi \neq \pi_* : X(\ell(\pi)) \le 0 ) 
= \E \left[ \p \left( \exists\, \pi \neq \pi_* : X(\ell(\pi)) \le 0 \, \middle| \, \pi_{*} \right) \right],
\]
so a union bound implies that
\[
\p (\wh{\pi} \neq \pi_* ) 
\le \E \left[ \sum\limits_{\pi \in \cS_{n}: \pi \neq \pi_*} \p \left(X(\ell(\pi)) \le 0 \, \middle| \, \pi_{*} \right) \right].
\]
To proceed, we shall bound the terms in the summation on the right hand side by studying the probability generating function (PGF) of $X(\tau)$ for any fixed lifted permutation $\tau$. 
More specifically, we will study the PGF of $X(\tau)$ given both $\pi_{*}$ (equivalently, $\tau_{*}$) and the community labeling $\boldsymbol{\sigma}$.

\subsection{Probabilistic bounds for \texorpdfstring{$X(\tau)$}{X(tau)}}

In this section, we establish large-deviations-type probability bounds for the event that $\wh{\tau} = \tau$, where $\tau$ is a fixed lifted permutation. In our analysis we derive probability bounds which hold pointwise given {\it any} community labeling $\boldsymbol{\sigma}$ and ground truth lifted permutation $\tau_{*}$. We then derive simpler expressions for the bounds that hold when the two communities are approximately balanced.  

To make these ideas more formal, we begin by defining some notation. 
Recall that 
given $\boldsymbol{\sigma}$, 
the set $\cE^+(\boldsymbol{\sigma})$ is the set of intra-community vertex pairs, while $\cE^-(\boldsymbol{\sigma})$ is the set of inter-community vertex pairs. 
Given $\boldsymbol{\sigma}$ and $\tau_{*}$, 
for a fixed lifted permutation $\tau$ we also define the quantities 
\begin{align*}
 M^{+} ( \tau ) & : = \left | \left \{ e \in \cE^+(\boldsymbol{\sigma}) : \tau(e) \neq \tau_*(e) \right \} \right|,  \\
M^-(\tau) & : = \left| \left \{ e \in \cE^-(\boldsymbol{\sigma}) : \tau(e) \neq \tau_*(e) \right \} \right|, \\
Y^+(\tau) & : = \sum\limits_{e \in \cE^+(\boldsymbol{\sigma}): \tau(e) \neq \tau_*(e)} A_{e} B_{\tau_{*}(e)}, \\
Y^-(\tau) & : = \sum\limits_{e \in \cE^-(\boldsymbol{\sigma}): \tau(e) \neq \tau_*(e)} A_{e} B_{\tau_{*}(e)}.
\end{align*}
In words, $M^+(\tau)$ is the number of mismatched intra-community vertex pairs. 
Furthermore, $Y^+(\tau)$ is the number of mismatched intra-community vertex pairs which contribute to the alignment score of the ground truth lifted permutation $\tau_* = \ell(\pi_*)$. We have analogous interpretations for the inter-community quantities $M^-(\tau)$ and $Y^-(\tau)$. 
Note that in addition to $\tau$, these quantities depend on $\boldsymbol{\sigma}$ and $\tau_{*}$ as well; however, we suppress this in the notation for simplicity. 
Observe also that $M^{+}(\tau)$ and $M^{-}(\tau)$ are deterministic functions of $\boldsymbol{\sigma}$, $\tau_{*}$, and $\tau$. 
On the other hand, given $\boldsymbol{\sigma}$ and $\tau_{*}$, and fixing $\tau$, 
the quantities $Y^{+}(\tau)$ and $Y^{-}(\tau)$ are random variables, since they depend on the two graphs $G_{1}$ and $G_{2}$ as well. 

Given a community labeling~$\boldsymbol{\sigma}$ and the ground truth lifted permutation $\tau_{*}$, for a fixed lifted permutation $\tau$ we shall study the PGF 
\[
\Phi^{\tau} ( \theta, \omega, \zeta) : = \E \left [ \theta^{X(\tau)} \omega^{Y^+(\tau)} \zeta^{Y^-(\tau)} \, \middle| \, \boldsymbol{\sigma} , \tau_* \right ].
\]

\begin{remark} 
Since our goal is to bound the probability of the event $\{ X(\tau) \le 0 \}$, 
it is perhaps more natural to study simply the PGF of $X(\tau)$, 
rather than the joint PGF of $X(\tau)$, $Y^+(\tau)$, and $Y^-(\tau)$. 
However, the success of the former approach requires that $s^2 (\alpha + \beta)/2 > 2$, which is suboptimal. For a tighter analysis, one must condition on the typical behavior of $Y^+(\tau)$ and $Y^-(\tau)$, which in turn requires us to consider the joint PGF. This idea was previously used to show that the information-theoretic threshold can be achieved in the graph matching problem for Erd\H{o}s-R\'{e}nyi graphs~\cite{cullina2018exact,wu2021settling}.
\end{remark}

The next lemma provides a useful bound for $\Phi^{\tau}$ for any $\boldsymbol{\sigma}$ and $\tau_{*}$; we defer its proof to Section~\ref{lemma:pgf_bound}.  

\begin{lemma}
\label{lemma:pgf_bound} 
Given a community labeling $\boldsymbol{\sigma}$ and the ground truth lifted permutation $\tau_{*}$, the following holds. 
Fix $\pi \in \cS_{n}$ and let $\tau = \ell(\pi)$. For any constants $\epsilon \in (0,1)$ and $1 \le \omega, \zeta \le 3$, it holds for all $n$ large enough that 
\begin{equation}
\label{eq:pgf_bound}
\Phi^{\tau} \left(1/\sqrt{n}, \omega, \zeta \right) \le \mathrm{exp} \left( - (1 - \epsilon) s^2 \left(  \alpha M^+(\tau) + \beta M^-(\tau)  \right) \frac{\log n}{n} \right).
\end{equation}
\end{lemma}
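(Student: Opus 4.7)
The plan is to exploit the orbit structure of the edge permutation $\tau' := \tau_*^{-1} \tau$. The set $D(\tau) := \{e \in \cE : \tau'(e) \neq e\}$, on which $X(\tau)$, $Y^+(\tau)$, and $Y^-(\tau)$ are all supported, decomposes into the orbits of $\tau'$, each of length at least two. Since the pairs $(A_e, B'_e)$ are mutually independent across edges $e$ given $\boldsymbol{\sigma}$, and since the summands appearing in $X(\tau)$ couple only pairs $(e, \tau'(e))$ lying in the same orbit (and the summands of $Y^\pm$ involve single edges), the joint PGF factorizes as $\Phi^\tau(\theta, \omega, \zeta) = \prod_{O} \Psi_O(\theta, \omega, \zeta)$, where $O$ ranges over the orbits of $\tau'$ and $\Psi_O$ depends only on the random variables indexed by edges in $O$. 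Taking $\log$ reduces the problem to proving, for each orbit $O$, the per-orbit bound $\Psi_O \le \exp(-(1-\epsilon) s^2 (\alpha |O \cap \cE^+| + \beta |O \cap \cE^-|) \log n / n)$; summing over orbits, and using $\sum_O |O \cap \cE^{\pm}| = M^{\pm}(\tau)$, then yields the lemma.

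For a single orbit $O = (e_1, \ldots, e_k)$, I would set up a transfer-matrix representation: for $i = 1, \ldots, k$ define a $2 \times 2$ matrix $T_i(\theta, \omega, \zeta)$ indexed by $B'_{e_{i-1}}, B'_{e_i} \in \{0,1\}$, whose entries sum the per-edge weight $\theta^{A_{e_i}(B'_{e_i} - B'_{e_{i+1}})} u_{e_i}^{A_{e_i} B'_{e_i}}$ against the conditional law of $(A_{e_i}, B'_{e_i})$ given $c_{e_i}$ (with $u_e = \omega$ for $e \in \cE^+$ and $u_e = \zeta$ for $e \in \cE^-$). Then $\Psi_O = \Tr\bigl(\prod_{i=1}^{k} T_i\bigr)$. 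Substituting $\theta = 1/\sqrt{n}$, $p = \alpha \log n / n$, $q = \beta \log n / n$ gives $T_i = I + (\log n / n) M_i + O((\log n / n)^2)$ for explicit $2\times 2$ matrices $M_i$ depending only on $c_{e_i}, s, \omega, \zeta$. A careful eigenvalue/operator-norm bound on the ordered product, using $1 \le \omega, \zeta \le 3$ uniformly, should then yield the target per-orbit estimate for all $n$ large enough.

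The hard part will be obtaining the per-edge constant with the sharp value $s^2$ (as opposed to a weaker $s^2/2$), since this is exactly what allows Theorem~\ref{thm:graph_matching} to achieve the connectivity threshold $s^2 (\alpha+\beta)/2 > 1$ rather than the easier $s^2(\alpha+\beta)/2 > 2$. Two features of the correlated SBM complicate the analysis beyond the Erd\H{o}s--R\'enyi case of \cite{cullina2018exact, wu2021settling}: first, the pair distribution of $(A_e, B'_e)$ depends on the community type $c_e$, so the transfer matrices $T_i$ are generally not identical along the orbit and do not commute; second, the auxiliary factors $\omega, \zeta$, which are introduced precisely so one can later condition on typical values of $Y^\pm(\tau)$ and thereby save the missing factor of two in the threshold, interact with the intra-/inter-community pattern along the orbit in a way that resists a simple closed-form computation. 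As the paper's own overview suggests, the cleanest route is likely to replace the explicit spectral calculation by an inductive bound that builds up $\Psi_O$ (or, equivalently, accumulates vertices and hence edges of the orbits) one step at a time, maintaining an invariant on the running partial PGF that is strong enough to deliver the desired per-edge decay at every induction step; verifying that this invariant survives uniformly in the community-type pattern is the main calculation.
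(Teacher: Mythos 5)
Your overall route matches the paper's: decompose $\tau' = \tau_*^{-1}\circ\tau$ into cycles (your ``orbits''), use independence of the pairs $(A_e,B'_e)$ across $e$ to factorize $\Phi^\tau$ over cycles, prove a per-cycle bound, and sum the exponents using $\sum_{C}|C\cap\cE^{\pm}(\boldsymbol\sigma)|=M^{\pm}(\tau)$. The recursive/transfer-matrix representation of the per-cycle PGF is also essentially the paper's setup (its displayed $2\times2$ recursion for $\phi_{k,ij,m}$ is exactly your $T_i$, transposed). So there is no disagreement on strategy.

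However, your asymptotic claim $T_i=I+(\log n/n)M_i+O((\log n/n)^2)$ is wrong and would derail the argument if pursued. With $\theta=1/\sqrt n$, one entry of $T_i$ is $p_{00}+p_{10}\theta^{-1}$, which is $\approx 1 + O(\log n/\sqrt n)$ rather than $O(\log n/n)$, since $p_{00}\to 1$. So $T_i$ is close to the rank-one matrix $\left(\begin{smallmatrix}1&1\\0&0\end{smallmatrix}\right)$, not to $I$. This makes a naive eigenvalue or operator-norm bound on $\prod_i T_i$ lossy: the leading eigenvalue is $1-\Theta(\log n/n)$, but its eigenvector is $(1,1)^T$-like, not $(1,0)^T$, and the perturbation of the leading eigenvalue must be tracked through a non-commuting, community-type-dependent product. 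What actually closes the gap in the paper is precisely the ``invariant on the running partial PGF'' you gesture at in your last paragraph: the paper proves, by a self-contained induction, that $\phi_{k,ij,1}\le(1+R\theta^{-1})\phi_{k,ij,0}$ with $R=O(\log n/n)$ (their Lemma on $\phi_{k,ij,1}$ vs.\ $\phi_{k,ij,0}$), which collapses the $2\times2$ recursion to a scalar contraction with per-step factor $\exp(-(1-\epsilon)p_{11})$ or $\exp(-(1-\epsilon)q_{11})$ depending on $\lambda_k$, i.e.\ the sharp $s^2\alpha$ or $s^2\beta$ constant; one then handles the boundary terms at $k=1,2,|C|$ separately. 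You identified that this is the right move, but the invariant itself and its verification (uniformly in the community pattern, which is what makes the non-i.i.d.\ case harder than Erd\H{o}s--R\'enyi) is the content of the proof and is missing here.
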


We remark that \eqref{eq:pgf_bound} bounds the probability generating function for the specific value $\theta = 1/\sqrt{n}$. This choice is somewhat arbitrary; the proof of Lemma \ref{lemma:pgf_bound} shows that the bound holds for all $\theta$ smaller than some positive function of $\epsilon, \alpha, \beta$, and $s$, and larger than $\log(n) / n$. Similarly, the requirement that $\omega, \zeta \le 3$ is arbitrary; we expect that, with a careful analysis, one could even let $\omega$ and $\zeta$ be slowly increasing functions of $n$.

To apply Lemma \ref{lemma:pgf_bound} later on, we need to compute/estimate $M^+(\tau)$ and $M^-(\tau)$. 
To this end, given $\boldsymbol{\sigma}$ and $\pi_{*}$, for non-negative integers $k_1$ and $k_2$, 
let $S_{k_{1}, k_{2}}$ denote the set of lifted permutations $\ell(\pi)$ 
where $\pi$ incorrectly matches $k_{1}$ vertices in $V_{+}$ and 
incorrectly matches $k_{2}$ vertices in $V_{-}$. 
That is, define 
\[
S_{k_1, k_2} : = \left\{ \ell(\pi) : \left| \left\{ i \in V_{+} : \pi(i) \neq \pi_{*} (i) \right\} \right| = k_{1} \text{ and } \left| \left\{ i \in V_{-} : \pi(i) \neq \pi_{*} (i) \right\} \right| = k_{2} \right\}.
\]
Note that $S_{k_{1}, k_{2}}$ is defined given 
$\boldsymbol{\sigma}$ and $\pi_{*}$; 
however, for simplicity we omit these from the notation. 
The next lemma employs simple counting arguments to compute $M^+(\tau)$ and $M^-(\tau)$ for $\tau \in S_{k_1, k_2}$. 
In essence, it shows how to go from mismatches in the vertex permutation $\pi$ to mismatches in the lifted permutation $\tau = \ell(\pi)$. 
We note that a variant of this result in a related but slightly different setting was stated (without proof) in~\cite{onaran2016optimal}; 
we present the details for completeness. 

\begin{lemma}
\label{lemma:M} 
Fix $\pi \in \cS_{n}$ and let $\tau = \ell(\pi)$. 
Given $\boldsymbol{\sigma}$ and $\pi_{*}$, 
let $k_{1}$ and $k_{2}$ be such that $\tau \in S_{k_1, k_2}$. 
Then we have that  
\begin{align}
\label{eq:M^+}
M^+(\tau) & = \binom{k_1}{2} + k_1 ( |V_{+}| - k_1) + \binom{k_2}{2} + k_2 (|V_{-}| - k_2) - \left|E_{tr}^{+}\right|; \\
\label{eq:M^-}
M^-(\tau) & = k_1 |V_{-}| + k_2 |V_{+}| - k_1 k_2 - \left|E_{tr}^{-}\right|,
\end{align}
where 
\begin{align*}
E_{tr}^{+} &:= 
\left\{ (u,v) \in \cE^{+}(\boldsymbol{\sigma}) : \pi(u) = \pi_{*} (v), \pi(v) = \pi_{*}(u) \right\}, \\
E_{tr}^{-} &:= 
\left\{ (u,v) \in \cE^{-}(\boldsymbol{\sigma}) : \pi(u) = \pi_{*} (v), \pi(v) = \pi_{*}(u) \right\}.
\end{align*}
That is, $E_{tr}^{+}$ is the set of vertex pairs from the same community which are transposed under~$\pi$ compared to $\pi_{*}$,  
and an analogous description holds for $E_{tr}^{-}$. 
Moreover, we have the bounds 
$\left|E_{tr}^{+}\right|, \left|E_{tr}^{-}\right| \le (k_1 + k_2)/2$.
\end{lemma}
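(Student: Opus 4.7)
The plan is to directly enumerate, separately for intra- and inter-community vertex pairs, the pairs $e \in \cE$ for which $\tau(e) = \tau_{*}(e)$, and then subtract from the total count in each class to obtain $M^{+}(\tau)$ and $M^{-}(\tau)$.

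First I would characterize when the lifted permutation agrees: for $e = (u,v)$, we have $\tau(e) = \{\pi(u),\pi(v)\}$ and $\tau_{*}(e) = \{\pi_{*}(u),\pi_{*}(v)\}$ as unordered pairs, so $\tau(e) = \tau_{*}(e)$ holds in exactly one of two mutually exclusive cases: (a) $\pi$ agrees with $\pi_{*}$ at both endpoints, i.e.\ both $u$ and $v$ are correctly matched; or (b) $\pi$ swaps the images, i.e.\ $\pi(u) = \pi_{*}(v)$ and $\pi(v) = \pi_{*}(u)$. Since $\pi_{*}$ is a bijection and $u \neq v$, case~(b) forces both $u$ and $v$ to be mismatched, so the two cases are indeed disjoint. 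Case~(b) is exactly the condition defining membership in $E_{tr}^{+}$ or $E_{tr}^{-}$.

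For $M^{+}(\tau)$, I would count the intra-community pairs where both endpoints are correctly matched: these form the set of pairs lying entirely within $(V_{+} \setminus \{\text{mismatched}\})$ or entirely within $(V_{-} \setminus \{\text{mismatched}\})$, giving $\binom{|V_{+}|-k_{1}}{2} + \binom{|V_{-}|-k_{2}}{2}$ pairs. Adding the transposition pairs $|E_{tr}^{+}|$ and subtracting from the total $\binom{|V_{+}|}{2} + \binom{|V_{-}|}{2}$ of intra-community pairs yields $M^{+}(\tau)$. The algebraic identity $\binom{|V_{+}|}{2} - \binom{|V_{+}|-k_{1}}{2} = \binom{k_{1}}{2} + k_{1}(|V_{+}|-k_{1})$ (and similarly for $V_{-}$) puts this in the form stated in \eqref{eq:M^+}. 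The computation of $M^{-}(\tau)$ is analogous: inter-community pairs with both endpoints correctly matched number $(|V_{+}|-k_{1})(|V_{-}|-k_{2})$, subtracting this and $|E_{tr}^{-}|$ from $|V_{+}||V_{-}|$ and expanding gives \eqref{eq:M^-}.

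Finally, for the bound on $|E_{tr}^{+}|$ and $|E_{tr}^{-}|$, I would argue as follows: every transposition pair (in either $E_{tr}^{+}$ or $E_{tr}^{-}$) consists of two mismatched vertices, and each mismatched vertex $u$ belongs to at most one transposition pair, since the identity $\pi(u) = \pi_{*}(v)$ uniquely determines the partner $v = \pi_{*}^{-1}(\pi(u))$. There are $k_{1}+k_{2}$ mismatched vertices in total, hence $|E_{tr}^{+}| + |E_{tr}^{-}| \le (k_{1}+k_{2})/2$, which a fortiori implies the stated individual bounds. This lemma is essentially a counting exercise with no real obstacle; the only delicate point is correctly isolating the transposition case in the characterization of when $\tau(e) = \tau_{*}(e)$, so that intra- and inter-community contributions are counted without double-counting.
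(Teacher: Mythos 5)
Your proof is correct and takes essentially the same approach as the paper: a direct combinatorial count of the mismatched pairs in each community class, with transpositions subtracted off and the bound obtained by noting each transposition consumes two mismatched vertices. The only cosmetic difference is that you count the complement (pairs fixed by $\tau$, namely both-endpoints-correct plus transpositions) and subtract from $\binom{|V_+|}{2} + \binom{|V_-|}{2}$ (resp.\ $|V_+||V_-|$), then expand algebraically, whereas the paper tallies the contributing pairs directly by type; these are equivalent countings and both verify the same identities.
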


\begin{proof}
Let $e = (i,j)$. 
Observe first that 
if $\pi(i) = \pi_{*}(i)$ and $\pi(j) = \pi_{*}(j)$, 
then also $\tau(e) = \tau_{*}(e)$, 
and hence this pair does not contribute to $M^{+}(\tau)$ or $M^{-}(\tau)$. 
Thus in order for $e = (i,j)$ to contribute to $M^{+}(\tau)$ or $M^{-}(\tau)$, 
we must have either $\pi(i) \neq \pi_{*}(i)$ or $\pi(j) \neq \pi_{*}(j)$.

We start by deriving \eqref{eq:M^+}. 
Let us first consider the contribution to $M^{+}(\tau)$ from pairs of vertices in $V_{+}$. 
The number of pairs of vertices $i,j \in V_{+}$ 
such that $\pi(i) \neq \pi_{*}(i)$ and $\pi(j) \neq \pi_{*}(j)$ 
is $\binom{k_1}{2}$, 
while the number of pairs of vertices $i,j \in V_{+}$ 
such that one is correctly matched by $\pi$ and the other is incorrectly matched 
is $k_1 (|V_{+}| - k_1)$. 
These give the first two terms in~\eqref{eq:M^+}. 
However, not all of these pairs of vertices have $\tau(e) \neq \tau_{*}(e)$. 
Specifically, if $i,j \in V_{+}$ are such that 
$\pi(i) = \pi_{*}(j)$ and $\pi(j) = \pi_{*}(i)$, 
then both $i$ and $j$ are mismatched (and hence counted above), 
yet $\tau(e) = \tau_{*}(e)$ (and hence should not be counted). 
This leads to the subtraction in~\eqref{eq:M^+}. 
The contribution to $M^{+}(\tau)$ from pairs in $V_{-}$ is analogous.

We now turn to deriving \eqref{eq:M^-}. 
The number of pairs where $i \in V_{+}$ and $j \in V_{-}$ 
such that $\pi(i) \neq \pi_{*}(i)$ 
is $k_{1} \left| V_{-} \right|$. 
Similarly, the number of pairs where $i \in V_{+}$ and $j \in V_{-}$ 
such that $\pi(j) \neq \pi_{*}(j)$ 
is $k_{2} \left| V_{+} \right|$. 
Here we have double-counted pairs $i \in V_{+}$ and $j \in V_{-}$ 
such that $\pi(i) \neq \pi_{*}(i)$ \emph{and} $\pi(j) \neq \pi_{*}(j)$; 
there are $k_{1} k_{2}$ such pairs. 
Thus the number of pairs $i \in V_{+}$ and $j \in V_{-}$ 
such that $\pi(i) \neq \pi_{*}(i)$ or $\pi(j) \neq \pi_{*}(j)$ 
is $k_1 |V_{-}| + k_2 |V_{+}| - k_1 k_2$. 
However, not all of these pairs of vertices have $\tau(e) \neq \tau_{*}(e)$. 
Specifically, if $i \in V_{+}$ and $j \in V_{-}$ are such that 
$\pi(i) = \pi_{*}(j)$ and $\pi(j) = \pi_{*}(i)$, 
then both $i$ and $j$ are mismatched (and hence counted above), 
yet $\tau(e) = \tau_{*}(e)$ (and hence should not be counted). 
This leads to the subtracted term in~\eqref{eq:M^-}. 

Finally, the total number of transpositions (of $\pi$ compared to $\pi_{*}$) satisfies $2 \left( \left|E_{tr}^{+} \right| + \left|E_{tr}^{-} \right| \right) \le k_1 + k_2$, since each transposition involves two mismatched vertices and $k_1 + k_2$ is the total number of mismatched vertices. This leads to the bounds $\left|E_{tr}^{+}\right|, \left|E_{tr}^{-}\right| \le (k_1 + k_2 )/2$ as desired.
\end{proof}

The combinatorial formulas \eqref{eq:M^+} and \eqref{eq:M^-} are somewhat unwieldy to use directly. Fortunately, we can derive relatively simple linear lower bounds when the two communities are approximately balanced. To formalize this idea, we first introduce the following ``nice" event. 

\begin{definition}[Balanced communities]
For $\epsilon > 0$ define the event 
\[
\cF_\epsilon := 
\left\{
\left(1 - \frac{\epsilon}{2} \right) \frac{n}{2} 
\le |V_{+}|, |V_{-}| 
\le \left( 1 + \frac{\epsilon}{2} \right) \frac{n}{2}
\right\}.
\]
\end{definition}

Note that whether or not $\cF_{\epsilon}$ holds depends only on the community labels $\boldsymbol{\sigma}$. Also, since the community labels are i.i.d.\ uniform, we have for any fixed $\epsilon > 0$ that $\p \left( \cF_{\epsilon} \right) = 1 - o(1)$  as $n \to \infty$.

Now fix $\epsilon > 0$ and a lifted permutation $\tau$. 
Our next goal is to find simple lower bounds for $M^+(\tau)$ and $M^-(\tau)$, given community labels $\boldsymbol{\sigma}$ such that $\cF_{\epsilon}$ holds, and given $\tau_{*}$. 
To this end, let $k_{1}$ and $k_{2}$ be such that $\tau \in S_{k_1, k_2}$. 
We distinguish two cases: 
\begin{itemize}
    \item \emph{Case 1:} both $k_{1}$ and $k_{2}$ are small; specifically, $k_{1} \leq \frac{\epsilon}{2} |V_{+}|$ and $k_2 \le \frac{\epsilon}{2} |V_{-}|$.
    \item \emph{Case 2:} either $k_{1}$ or $k_{2}$ is large; 
    specifically, either $k_{1} \geq \frac{\epsilon}{2} |V_{+}|$ or $k_2 \geq \frac{\epsilon}{2} |V_{-}|$.
\end{itemize}
We start with the first case, when $k_{1} \leq \frac{\epsilon}{2} |V_{+}|$ and $k_2 \le \frac{\epsilon}{2} |V_{-}|$. 
\begin{lemma}\label{lem:Mtau_lowerbounds_ksmall}
Fix $\epsilon > 0$. Given community labels $\boldsymbol{\sigma}$ such that $\cF_{\epsilon}$ holds, let $k_{1}$ and $k_{2}$ be such that $k_{1} \leq \frac{\epsilon}{2} |V_{+}|$ and $k_2 \le \frac{\epsilon}{2} |V_{-}|$. 
Given $\boldsymbol{\sigma}$ and $\pi_{*}$, 
let $\tau$ be a lifted permutation such that $\tau \in S_{k_1, k_2}$. 
For all $n$ large enough we have the following bounds: 
\begin{align}
    M^+(\tau) &\geq (1 - \epsilon)\frac{n}{2} (k_1 + k_2), \label{eq:M+_regime1} \\
    M^-(\tau) &\geq (1 - \epsilon)\frac{n}{2} (k_1 + k_2). \label{eq:M-_regime1}
\end{align}
\end{lemma}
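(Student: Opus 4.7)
The plan is to apply the combinatorial formulas \eqref{eq:M^+} and \eqref{eq:M^-} from Lemma~\ref{lemma:M} and show that in the small-$k$ regime the linear-in-$(k_1+k_2)$ main terms dominate all lower-order corrections (the binomials $\binom{k_1}{2}, \binom{k_2}{2}$, the cross term $k_1 k_2$, and the transposition counts $|E_{tr}^\pm| \le (k_1+k_2)/2$) once $n$ is sufficiently large. The constraints $k_1 \le (\epsilon/2)|V_+|$ and $k_2 \le (\epsilon/2)|V_-|$, combined with $|V_\pm| \in [(1-\epsilon/2)n/2, (1+\epsilon/2)n/2]$ from $\cF_\epsilon$, will be used repeatedly to convert ``fraction of $|V_\pm|$'' bounds on $k_1,k_2$ into bounds in terms of $n$.

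For \eqref{eq:M+_regime1}, I would drop the non-negative binomial terms, apply $|V_\pm| - k_{1,2} \ge (1-\epsilon/2)|V_\pm|$, and then $|V_\pm| \ge (1-\epsilon/2)n/2$, yielding
\[
M^+(\tau) \ge (1-\epsilon/2)^2 (k_1+k_2) \frac{n}{2} - \frac{k_1+k_2}{2}.
\]
A short arithmetic check shows that for $n \ge 4/\epsilon^2$ the right-hand side is at least $(1-\epsilon)(k_1+k_2) n/2$, since the slack between $(1-\epsilon/2)^2$ and $1-\epsilon$ is exactly $\epsilon^2/4$, which absorbs the $1/2$ additive loss.

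For \eqref{eq:M-_regime1} the only extra step is to absorb $-k_1 k_2$ into the main term. Using $k_2 \le (\epsilon/2)|V_-|$ gives $k_1 k_2 \le (\epsilon/2) k_1 |V_-|$, and symmetrically $k_1 k_2 \le (\epsilon/2) k_2 |V_+|$; averaging these two inequalities yields $k_1 k_2 \le (\epsilon/4)(k_1 |V_-| + k_2 |V_+|)$. Hence
\[
M^-(\tau) \ge (1-\epsilon/4)(k_1 |V_-| + k_2 |V_+|) - \tfrac{k_1+k_2}{2} \ge (1-\epsilon/4)(1-\epsilon/2)(k_1+k_2)\tfrac{n}{2} - \tfrac{k_1+k_2}{2},
\]
and again a direct computation shows this is at least $(1-\epsilon)(k_1+k_2)n/2$ once $n$ is large enough in terms of $\epsilon$ (the gap between $(1-\epsilon/4)(1-\epsilon/2)$ and $1-\epsilon$ is of order $\epsilon$, which dominates the additive $1/(k_1+k_2) \le 1$ loss from the transposition term).

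There is no real obstacle: the argument reduces to careful bookkeeping to verify that the chosen constants produce the factor $(1-\epsilon)$ uniformly over all $(k_1,k_2)$ in the prescribed range, once $n$ exceeds an explicit $\epsilon$-dependent threshold. The only mildly delicate point is the handling of the $-k_1 k_2$ term, where the symmetric averaging trick keeps the bound balanced between the two communities rather than forcing a wasteful worst-case split.
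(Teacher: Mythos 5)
Your proof is correct and follows essentially the same route as the paper's: start from the formulas in Lemma~\ref{lemma:M}, drop the non-negative binomials, bound $\left|E_{tr}^{\pm}\right|$ by $(k_1+k_2)/2$, and use the smallness of $k_1,k_2$ together with the balance event $\cF_\epsilon$ to absorb the remaining corrections into the main linear term for $n$ large. The only cosmetic difference is in handling $-k_1k_2$ for $M^-(\tau)$: the paper regroups as $k_1\bigl(|V_-|-\tfrac{k_2}{2}-\tfrac12\bigr)+k_2\bigl(|V_+|-\tfrac{k_1}{2}-\tfrac12\bigr)$, whereas you use the averaging bound $k_1k_2\le\tfrac{\epsilon}{4}(k_1|V_-|+k_2|V_+|)$, and both yield the same $(1-\epsilon)$ factor.
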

\begin{proof}
For $n$ sufficiently large, we have the following lower bound for $M^{+}(\tau)$: 
\begin{align*}
M^+(\tau) & \stackrel{(a)}{\ge} k_1 (|V_{+}| - k_1) + k_2( |V_{-}| - k_2) - \frac{k_1 + k_2}{2} 
\stackrel{(b)}{\ge} \left( 1 - \frac{\epsilon}{2} \right) \left( k_1 |V_{+}| + k_2 |V_{-}| \right) - \frac{k_1 + k_2}{2} \nonumber \\
& \stackrel{(c)}{\ge} \left( \left (1 - \frac{\epsilon}{2} \right)^2 \frac{n}{2} - 1 \right) (k_1 + k_2) 
\stackrel{(d)}{\ge} (1 - \epsilon)\frac{n}{2} (k_1 + k_2),
\end{align*}
where $(a)$ follows from ignoring positive terms in the formula \eqref{eq:M^+} and bounding $\left|E_{tr}^{+}\right|$ by $(k_1 + k_2)/2$, 
$(b)$ uses the upper bounds 
$k_{1} \leq \frac{\epsilon}{2} |V_{+}|$ 
and $k_2 \le \frac{\epsilon}{2} |V_{-}|$, 
$(c)$ uses the lower bounds 
$|V_{+}|, |V_{-}| \geq (1-\epsilon/2)n/2$, 
which hold on the event $\cF_\epsilon$, 
and finally $(d)$ uses $(1 - \epsilon/2)^2 > 1 - \epsilon$ 
and the fact that $n$ is sufficiently large. 
Turning to $M^-(\tau)$, we have the following lower bound: 
\begin{align*}
M^-(\tau) & \stackrel{(e)}{\ge} k_1 |V_{-}| + k_2 |V_{+}| - k_1 k_2 - \frac{k_1 + k_2}{2} 
 = k_1 \left( |V_{-} | - \frac{k_2}{2} - \frac{1}{2} \right) + k_2 \left( |V_{+}| - \frac{k_1}{2} - \frac{1}{2} \right) \nonumber  \\
& \stackrel{(f)}{\ge} \left( 1 - \frac{\epsilon}{2} \right) ( k_1 |V_{-} | + k_2 |V_{+}| ) 
 \stackrel{(g)}{\ge} \left( 1 - \frac{\epsilon}{2} \right)^2 \frac{n}{2} (k_1 + k_2) 
 \ge (1 - \epsilon) \frac{n}{2} (k_1 + k_2),
\end{align*}
where $(e)$ follows from bounding $\left|E_{tr}^{-} \right|$ by $(k_1 + k_2)/2$ in the formula \eqref{eq:M^-}, 
$(f)$ uses $k_1 + 1 \le \epsilon |V_{+}|$ and $k_2 + 1 \le \epsilon |V_{-}|$, and finally $(g)$ uses the lower bounds $|V_{+}|, |V_{-}| \geq (1-\epsilon/2)n/2$, which hold on the event $\cF_\epsilon$. 
\end{proof}

Combining these estimates with Lemma~\ref{lemma:pgf_bound}, 
the following lemma bounds the conditional probability that the 
estimate $\wh{\pi}$ has $k_{1}$ mismatches in $V_{1}$ and $k_{2}$ mismatches in $V_{2}$, for small $k_{1}$ and~$k_{2}$. 
\begin{lemma}
\label{lemma:prob_bound_1}
Fix constants $\alpha, \beta > 0$, $s \in [0,1]$, and $\epsilon \in (0,1)$ such that 
$s^2 (\alpha + \beta)/2 > (1+\epsilon)(1-\epsilon)^{-2}$. 
Given $\boldsymbol{\sigma}$, 
let $k_{1}$ and $k_{2}$ be such that 
$k_1 \le \frac{\epsilon}{2} |V_{+}|$ and $k_2 \le \frac{\epsilon}{2} |V_{-}|$. 
For all $n$ large enough we have that 
\begin{equation}
\label{eq:prob_bound_1}
\p \left( \wh{\tau} \in S_{k_1, k_2} \, \middle| \, \boldsymbol{\sigma}, \tau_* \right) \mathbf{1} \left(\cF_{\epsilon}\right) \le n^{ - \epsilon(k_1 + k_2) }.
\end{equation}
\end{lemma}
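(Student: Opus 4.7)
My plan is to combine a union bound over permutations in $S_{k_1, k_2}$ with a Markov-type argument applied to the probability generating function of $X(\tau)$ from Lemma~\ref{lemma:pgf_bound}, and then to control the size of $S_{k_1, k_2}$ by direct counting. Since $\wh{\tau} = \tau$ forces $X(\tau) \le 0$, a union bound gives
\[
\p \left( \wh{\tau} \in S_{k_1, k_2} \, \middle| \, \boldsymbol{\sigma}, \tau_* \right) \le \sum_{\tau \in S_{k_1,k_2}} \p \left( X(\tau) \le 0 \, \middle| \, \boldsymbol{\sigma}, \tau_* \right).
\]
For each fixed $\tau$, taking $\theta = 1/\sqrt{n}$ and $\omega = \zeta = 1$ in Markov's inequality yields
\[
\p \left( X(\tau) \le 0 \, \middle| \, \boldsymbol{\sigma}, \tau_* \right)
\le \E \left[ \left( 1/\sqrt{n} \right)^{X(\tau)} \, \middle| \, \boldsymbol{\sigma}, \tau_* \right]
= \Phi^{\tau}(1/\sqrt{n}, 1, 1),
\]
and then Lemma~\ref{lemma:pgf_bound} (applied with the same $\epsilon$ as in the current statement), combined with Lemma~\ref{lem:Mtau_lowerbounds_ksmall} (valid on $\cF_\epsilon$ for our range of $k_1, k_2$), produces
\[
\p \left( X(\tau) \le 0 \, \middle| \, \boldsymbol{\sigma}, \tau_* \right) \mathbf{1}(\cF_\epsilon)
\le n^{-(1-\epsilon)^2 s^2 (\alpha + \beta)(k_1+k_2)/2}.
\]

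Next I would bound $|S_{k_1,k_2}|$ combinatorially. A permutation $\pi$ with $\ell(\pi) \in S_{k_1,k_2}$ is specified by (i) a choice of $k_1$ mismatched vertices in $V_+$, (ii) a choice of $k_2$ mismatched vertices in $V_-$, and (iii) a bijection from the resulting set of $k_1+k_2$ mismatched vertices to their $\pi_*$-images that has no fixed point (a derangement). This gives
\[
|S_{k_1,k_2}|
\le \binom{|V_+|}{k_1} \binom{|V_-|}{k_2} (k_1+k_2)!
\le \binom{k_1+k_2}{k_1} |V_+|^{k_1} |V_-|^{k_2}
\le (2n)^{k_1+k_2},
\]
using $\binom{k_1+k_2}{k_1} \le 2^{k_1+k_2}$ and $|V_\pm| \le n$.

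Combining the per-permutation bound with the counting bound,
\[
\p \left( \wh{\tau} \in S_{k_1,k_2} \, \middle| \, \boldsymbol{\sigma}, \tau_* \right) \mathbf{1}(\cF_\epsilon)
\le (2n)^{k_1+k_2} \cdot n^{-(1-\epsilon)^2 s^2 (\alpha+\beta)(k_1+k_2)/2}.
\]
The hypothesis $s^2(\alpha+\beta)/2 > (1+\epsilon)(1-\epsilon)^{-2}$ rearranges to $(1-\epsilon)^2 s^2 (\alpha+\beta)/2 - (1+\epsilon) =: \gamma > 0$, so the exponent of $n$ in the right-hand side above is $(k_1+k_2)\bigl[1 + \log 2/\log n - (1-\epsilon)^2 s^2(\alpha+\beta)/2\bigr] \le (k_1+k_2)\bigl[-\epsilon - \gamma + \log 2/\log n\bigr]$, which is at most $-\epsilon(k_1+k_2)$ once $n$ is large enough that $\log 2/\log n \le \gamma$. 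This delivers the claimed bound~\eqref{eq:prob_bound_1}.

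I do not anticipate any real obstacle here: the proof is a clean assembly of Lemmas~\ref{lemma:pgf_bound} and~\ref{lem:Mtau_lowerbounds_ksmall} with elementary counting. The only point that deserves care is matching the $(1-\epsilon)$-loss coming from the PGF bound with the $(1-\epsilon)$-loss in the linear lower bounds on $M^\pm$; the squared loss $(1-\epsilon)^2$ that appears in the final exponent is precisely what the hypothesis $s^2(\alpha+\beta)/2 > (1+\epsilon)(1-\epsilon)^{-2}$ is designed to tolerate, leaving a strictly positive slack that absorbs the counting factor $(2n)^{k_1+k_2}$ for all $n$ large enough.
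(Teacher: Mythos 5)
Your proof is correct and takes essentially the same route as the paper: union bound over $S_{k_1,k_2}$, Markov's inequality applied to $\theta^{X(\tau)}$ with $\theta=1/\sqrt{n}$ and $\omega=\zeta=1$, the PGF bound from Lemma~\ref{lemma:pgf_bound}, the linear lower bounds on $M^\pm$ from Lemma~\ref{lem:Mtau_lowerbounds_ksmall}, and elementary counting of $|S_{k_1,k_2}|$. The only cosmetic difference is that the paper bounds $|S_{k_1,k_2}|$ by $\binom{n}{k_1+k_2}(k_1+k_2)! \le n^{k_1+k_2}$, whereas you get the marginally weaker $(2n)^{k_1+k_2}$ and absorb the extra $2^{k_1+k_2}$ using the slack $\gamma>0$ for $n$ large; both work.
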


\begin{proof}
Let $\tau \in S_{k_1, k_2}$. We then have that 
\begin{align*}
\p \left( \wh{\tau} = \tau \, \middle| \, \boldsymbol{\sigma} , \tau_* \right) 
& \stackrel{(a)}{\le} \p \left( X(\tau) \le 0 \, \middle| \,  \boldsymbol{\sigma}, \tau_* \right) 
= \p \left( n^{-X(\tau)/2} \ge 1 \, \middle| \, \boldsymbol{\sigma}, \tau_{*} \right) \\
& \stackrel{(b)}{\le} \Phi^\tau \left(1/\sqrt{n}, 1, 1 \right) 
\stackrel{(c)}{\le} \exp \left( - (1 - \epsilon) s^2 \left(  \alpha M^+(\tau) +  \beta M^-(\tau) \right) \frac{\log n}{n} \right),
\end{align*} 
where $(a)$ is due to the observation made earlier that 
$\wh{\tau}$ is a minimizer of $X(\tau)$, and $X(\tau_{*}) = 0$; 
$(b)$ is due to Markov's inequality; 
and $(c)$ follows from Lemma~\ref{lemma:pgf_bound}, for all $n$ large enough. 

The estimate above allows us to bound the probability of interest via a union bound. 
To do this, we need to estimate $\left| S_{k_{1},k_{2}} \right|$. Since there are $k_1 + k_2$ mismatched vertices in total, there are at most $\binom{n}{k_1 + k_2}$ ways to choose the set of mismatched vertices (this is a loose upper bound, since this formula disregards how many mismatched vertices there are of each community). The number of possible permutations on the mismatched vertices is at most $(k_1 + k_2)!$. 
Therefore 
\[
\left| S_{k_{1},k_{2}} \right| 
\leq \binom{n}{k_1 + k_2} (k_1 + k_2)! 
= \frac{n!}{(n - k_1 - k_2)!} \le n^{k_1 + k_2}.
\]
Thus a union bound implies that 
\begin{multline}\label{eq:union_bound_exponent}
\p \left( \wh{\tau} \in S_{k_1, k_2} \, \middle| \, \boldsymbol{\sigma} , \tau_{*} \right) 
 \le \left| S_{k_1, k_2} \right| \max\limits_{\tau \in S_{k_1, k_2} } \p \left( \wh{\tau} = \tau \, \middle| \, \boldsymbol{\sigma} , \tau_{*} \right)   \\
\le \max\limits_{\tau \in S_{k_1, k_2} } \exp \left( (k_1 + k_2) \log n - (1 - \epsilon) s^2 \left( \alpha M^+(\tau) + \beta M^- (\tau) \right) \frac{\log n}{n} \right).
\end{multline}
On the event $\cF_\epsilon$, provided that $n$ is large enough, and $k_1 \le \frac{\epsilon}{2} |V_{+}|$ and $k_2 \le \frac{\epsilon}{2} |V_{-}|$, 
we may use the bounds in Lemma~\ref{lem:Mtau_lowerbounds_ksmall} to bound the exponent in \eqref{eq:union_bound_exponent} from above by
\[
\left\{ 1 - (1-\epsilon)^{2} s^{2}\left( \alpha + \beta \right) / 2  \right\} \left( k_{1} + k_{2} \right) \log n 
  \leq -\epsilon(k_1 + k_2) \log n,
\]
where the second inequality follows from the assumption that 
$s^2 (\alpha + \beta)/2 > (1+\epsilon)(1-\epsilon)^{-2}$. 
Plugging this into~\eqref{eq:union_bound_exponent} 
we have thus obtained~\eqref{eq:prob_bound_1}. 
\end{proof}

Next, we consider the second case, when either $k_{1}$ or $k_{2}$ is large; specifically, either $k_{1} \geq \frac{\epsilon}{2} |V_{+}|$ or $k_2 \geq \frac{\epsilon}{2} |V_{-}|$. 
Our goal is to obtain lemmas analogous to Lemmas~\ref{lem:Mtau_lowerbounds_ksmall} and~\ref{lemma:prob_bound_1} in this case as well. 
\begin{lemma}\label{lem:Mtau_lowerbounds_klarge} 
Fix $\pi \in \cS_{n}$ and let $\tau = \ell(\pi)$. 
Fix $\epsilon > 0$. Given community labels $\boldsymbol{\sigma}$ such that $\cF_{\epsilon}$ holds, and given $\pi_{*}$, 
let $k_{1}$ and $k_{2}$ be such that $\tau \in S_{k_1, k_2}$. 
For all $n$ large enough we have the following bounds: 
\begin{align}
    M^+(\tau) &\geq (1 - \epsilon)\frac{n}{4} (k_1 + k_2), \label{eq:M+_regime2} \\
    M^-(\tau) &\geq (1 - \epsilon)\frac{n}{4} (k_1 + k_2). \label{eq:M-_regime2}
\end{align}
\end{lemma}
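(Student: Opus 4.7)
The plan is to argue directly from the exact combinatorial formulas~\eqref{eq:M^+} and~\eqref{eq:M^-} established in Lemma~\ref{lemma:M}, using lower bounds that are valid uniformly in $k_1, k_2$ subject only to the natural constraints $k_1 \leq |V_+|$ and $k_2 \leq |V_-|$, which always hold by definition of $S_{k_1,k_2}$. Notably, the hypothesis that either $k_1$ or $k_2$ is large is not actually needed for my approach to work; the weaker factor $n/4$ (versus $n/2$ in Lemma~\ref{lem:Mtau_lowerbounds_ksmall}) arises precisely from treating $k_i$ as potentially comparable to $|V_\pm|$.

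For $M^+(\tau)$, I would combine the binomial and cross terms for each community via the identity $\binom{k_i}{2} + k_i(|V_\pm| - k_i) = k_i|V_\pm| - k_i^2/2 - k_i/2$, and then apply the decisive algebraic inequality $k_i|V_\pm| - k_i^2/2 \geq k_i|V_\pm|/2$, which is equivalent to $k_i \leq |V_\pm|$. Summing the contributions from both communities, invoking the balance bound $|V_\pm| \geq (1-\epsilon/2)n/2$ that holds on $\cF_\epsilon$, and using $|E_{tr}^+| \leq (k_1+k_2)/2$ from Lemma~\ref{lemma:M}, produces the claimed bound modulo a lower-order $O(k_1+k_2)$ correction that is absorbed into the main term once $n \geq 8/\epsilon$, say.

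For $M^-(\tau)$, the only wrinkle is the $-k_1 k_2$ cross term in~\eqref{eq:M^-}. I would split it symmetrically using the identity $k_1|V_-| + k_2|V_+| - k_1 k_2 = k_1(|V_-| - k_2/2) + k_2(|V_+| - k_1/2)$, and then use $k_2 \leq |V_-|$ and $k_1 \leq |V_+|$ to bound each parenthesized factor from below by $|V_-|/2$ and $|V_+|/2$ respectively. Another application of $\cF_\epsilon$ together with $|E_{tr}^-| \leq (k_1+k_2)/2$ then yields the estimate, again up to a lower-order term negligible for large $n$.

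I do not anticipate a real obstacle; the whole argument is a short, careful calculation once the two algebraic rewritings are identified. The single conceptual point worth emphasizing is that the factor-of-two loss ($n/4$ here versus $n/2$ in Lemma~\ref{lem:Mtau_lowerbounds_ksmall}) is intrinsic to the bound $kV - k^2/2 \geq kV/2$: when $k \ll V$ the quadratic correction is negligible and $k(V-k) \approx kV$, whereas when $k$ is comparable to $V$ one genuinely cannot do better than $kV/2$. This is exactly what distinguishes the two regimes.
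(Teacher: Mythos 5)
Your proposal is correct and follows essentially the same route as the paper: both arguments start from the exact formulas in Lemma~\ref{lemma:M}, absorb $|E_{tr}^{\pm}|$ via the bound $(k_1+k_2)/2$, drop the quadratic terms using only $k_1 \le |V_{+}|$ and $k_2 \le |V_{-}|$ (which is exactly the source of the factor $1/2$ relative to Lemma~\ref{lem:Mtau_lowerbounds_ksmall}), and then invoke the balance on $\cF_{\epsilon}$, absorbing the residual $O(k_1+k_2)$ correction for $n$ large. Your remark that the hypothesis of $k_1$ or $k_2$ being large is not used is also consistent with the paper, which notes the same after the lemma.
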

\begin{proof}
On the event $\cF_\epsilon$, we have that 
\begin{align*}
M^+(\tau) & \ge \binom{k_1}{2} + k_1 ( |V_{+}| - k_1) + \binom{k_2}{2} + k_2 (|V_{-}| - k_2) - \frac{k_1 + k_2}{2} \nonumber \\
& = k_1 \left( |V_{+}|- \frac{k_1 + 2}{2}  \right) + k_2 \left( |V_{-}| - \frac{k_2 + 2}{2} \right) \nonumber  \\
& \stackrel{(h)}{\ge} \frac{1}{2} \left( k_1 (|V_{+}| - 2) + k_2 (|V_{-}| - 2) \right) \nonumber 
 \stackrel{(i)}{\ge} \left( 1 - \epsilon \right) \frac{n}{4} (k_1 + k_2),
\end{align*}
where $(h)$ is due to $k_1 \le |V_{+}|$ and $k_2 \le |V_{-}|$, 
and $(i)$ uses $|V_{+}| - 2 \ge (1 - \epsilon/2)|V_{+}|$ and $|V_{-}| - 2 \ge (1 - \epsilon/2) |V_{-}|$, as well as $|V_{+}|, |V_{-}| \ge (1 - \epsilon/2) n/2$, which all hold on the event $\cF_\epsilon$ for all $n$ large enough. 
For $M^-(\tau)$, we can use identical arguments to obtain~\eqref{eq:M-_regime2}. 
\end{proof}
Note that Lemma~\ref{lem:Mtau_lowerbounds_klarge} makes no assumptions on $k_{1}$ or $k_{2}$; 
however, the obtained lower bounds are smaller by a factor of $1/2$ compared to the bounds obtained in Lemma~\ref{lem:Mtau_lowerbounds_ksmall} when $k_{1}$ and $k_{2}$ are both small. 
The bounds in Lemma~\ref{lem:Mtau_lowerbounds_klarge} are used to obtain the following result, which is the analogue of Lemma~\ref{lemma:prob_bound_1}. 

\begin{lemma}
\label{lemma:prob_bound_2} 
Fix constants $\alpha, \beta > 0$, $s \in [0,1]$, and $\epsilon \in (0,1)$ such that 
$s^2 (\alpha + \beta)/2 > (1+\epsilon)(1-\epsilon)^{-2}$.  
There exists 
$\delta = \delta \left( \alpha, \beta, s, \epsilon \right) > 0$ such that the following holds. 
Given $\boldsymbol{\sigma}$, 
let $k_{1}$ and $k_{2}$ be such that 
either 
$k_1 \ge \frac{\epsilon}{2} |V_{+}|$ 
or 
$k_2 \ge \frac{\epsilon}{2} |V_{-}|$. 
For all $n$ large enough we have that 
\begin{equation}
\label{eq:prob_bound_2}
\p \left(\wh{\tau} \in S_{k_1, k_2} \, \middle| \, \boldsymbol{\sigma} , \tau_* \right) \mathbf{1}(\cF_\epsilon) \le n^{ - \delta(k_1 + k_2)}.
\end{equation}
\end{lemma}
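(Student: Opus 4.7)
The plan is to mirror the proof of Lemma~\ref{lemma:prob_bound_1}, substituting the weaker bounds from Lemma~\ref{lem:Mtau_lowerbounds_klarge} in place of those from Lemma~\ref{lem:Mtau_lowerbounds_ksmall}. First, for a fixed $\tau \in S_{k_1, k_2}$, I would apply Markov's inequality to $n^{-X(\tau)/2}$ together with Lemma~\ref{lemma:pgf_bound} (at $\omega = \zeta = 1$) to get $\p(\wh{\tau} = \tau \mid \boldsymbol{\sigma}, \tau_*) \leq \exp(-(1-\epsilon) s^2(\alpha M^+(\tau) + \beta M^-(\tau)) \log n / n)$. Union-bounding over $\tau \in S_{k_1, k_2}$ via $|S_{k_1, k_2}| \leq ((1+\epsilon/2) n)^{k_1+k_2}$ and the lower bound $M^{\pm}(\tau) \geq (1-\epsilon) n (k_1+k_2)/4$ from Lemma~\ref{lem:Mtau_lowerbounds_klarge} then gives an effective exponent proportional to $\{1 - (1-\epsilon)^2 s^2 (\alpha + \beta)/4\}(k_1+k_2)\log n$. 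Because the bounds in Lemma~\ref{lem:Mtau_lowerbounds_klarge} are a factor of two weaker than those in Lemma~\ref{lem:Mtau_lowerbounds_ksmall}, this naive approach is not strong enough to conclude under the stated hypothesis $s^2(\alpha+\beta)/2 > (1+\epsilon)(1-\epsilon)^{-2}$.

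To close this gap I would exploit that Lemma~\ref{lemma:pgf_bound} holds uniformly over $\omega, \zeta \in [1, 3]$, conditioning on the typical values of $Y^{\pm}(\tau)$. Writing $y^{+}_{*}(\tau) := \mathbb{E}[Y^{+}(\tau) \mid \boldsymbol{\sigma}, \tau_*] = s^2 \alpha M^+(\tau) \log(n)/n$ and analogously $y^{-}_{*}(\tau) := s^2 \beta M^-(\tau) \log(n)/n$, for a small parameter $\eta > 0$ I would decompose
\[
\p(\wh{\tau} = \tau \mid \boldsymbol{\sigma}, \tau_*) \leq \p\bigl(X(\tau) \leq 0, \, Y^+ \geq (1-\eta) y^+_*, \, Y^- \geq (1-\eta) y^-_* \,\big|\, \boldsymbol{\sigma}, \tau_*\bigr) + \p(Y^+ < (1-\eta) y^+_* \mid \boldsymbol{\sigma}, \tau_*) + \p(Y^- < (1-\eta) y^-_* \mid \boldsymbol{\sigma}, \tau_*).
\]
On the event appearing in the first term, $\theta^{X(\tau)} \cdot 3^{Y^+(\tau)} \cdot 3^{Y^-(\tau)} \geq 3^{(1-\eta)(y^+_* + y^-_*)}$, so applying Markov together with Lemma~\ref{lemma:pgf_bound} at $\omega = \zeta = 3$ replaces the coefficient $1 - \epsilon$ in the PGF exponent by $1 - \epsilon + (1-\eta) \log 3$. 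Since $1 + \log 3 > 2$, choosing $\eta$ small enough makes this coefficient exceed $2$, which together with Lemma~\ref{lem:Mtau_lowerbounds_klarge} and the hypothesis yields a net exponent bounded by $-\delta(k_1+k_2) \log n$ for some $\delta > 0$. The atypical tails would then be bounded via Chernoff's inequality, using that $Y^{\pm}(\tau)$ are sums of $M^\pm(\tau)$ independent Bernoulli random variables with parameters $s^2 p$ and $s^2 q$, respectively.

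The main obstacle is the joint calibration of the tilting parameter $\omega = \zeta = 3$ and the deviation parameter $\eta$: one needs $\eta$ small enough for the tilted coefficient to exceed $2$, while also large enough that the Chernoff tails $\exp(-\eta^2 y^\pm_*/2)$ decay faster than $n^{-(k_1+k_2)}$ after the union bound, which is delicate when $s^2 \min(\alpha, \beta)$ is small. I expect that resolving this will require a sub-case analysis distinguishing moderate $k_1 + k_2$ from the extreme regime $k_1 + k_2 \asymp n$, together with a sharper combinatorial estimate for $|S_{k_1, k_2}|$ that accounts for the number of derangements on $k_1 + k_2$ elements, and uses the exact formulas for $M^{\pm}(\tau)$ from Lemma~\ref{lemma:M} in the extreme regime.
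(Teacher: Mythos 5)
Your decomposition into three terms, the Markov-inequality treatment of the first term via the joint PGF with a nontrivial tilt, and your diagnosis that the naive $\omega=\zeta=1$ approach only reaches $s^2(\alpha+\beta)/2 > 2$ because of the lost factor of two in Lemma~\ref{lem:Mtau_lowerbounds_klarge} all match the paper (the paper tilts with $\omega=\zeta=e$ and $t^\pm=(1-\epsilon)s^2\alpha M^+(\tau)\log(n)/n$, etc., but your $\omega=\zeta=3$ with $t^\pm=(1-\eta)y^\pm_*$ would work for the first term just as well, since $1+\log 3>2$). You also correctly flag the real obstacle: calibrating $\eta$.

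However, the obstacle you flag is not fixable in the way you propose, and the actual fix is different in kind. If you union-bound the tail events $\{Y^\pm(\tau)\le t^\pm\}$ directly over $\tau\in S_{k_1,k_2}$, the union-bound cost is $\exp(\Theta((k_1+k_2)\log n))$ while the Chernoff exponent scales like $\eta^2 s^2\alpha (k_1+k_2)\log n$; this fails whenever $s^2\min(\alpha,\beta)$ is small, since $\eta\le 1$ cannot be made large. A sharper count of $|S_{k_1,k_2}|$ via derangements only shaves an $\exp(O(n))$ factor, and there is no ``moderate $k_1+k_2$'' sub-case to exploit since the hypotheses of the lemma together with $\cF_\epsilon$ already force $k_1+k_2=\Omega(n)$. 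The paper's resolution is to not union-bound over $\tau$ at all for the tail terms. The key observation is that $Y^+(\tau)$ differs from $\sum_{e\in H_F} A_e B_{\tau_*(e)}$, where $H_F := \cE^+(\boldsymbol{\sigma})\setminus\binom{F}{2}$ and $F=F(\pi)$ is the fixed-point set of $\pi$, by at most $|E_{tr}^+|\le n/2$. Thus the event $\{Y^+(\tau)\le t^+\}$ (up to an additive slack of $n/2$) depends on $\tau$ only through $F\subseteq V$, of which there are only $2^n$ choices. Since $k_1+k_2=\Omega(n)$ here, $(k_1+k_2)\log n\gg n\log 2$, so the $2^n$ union-bound cost is negligible, and a Bernstein bound with the \emph{fixed} constant deviation parameter $\epsilon/2$ (rather than an $\eta$ that needs to shrink) suffices. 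This is the missing ingredient; without it the proof does not close.
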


Due to the additional factor of $1/2$ in the lower bounds for $M^+(\tau)$ and $M^-(\tau)$ in Lemma~\ref{lem:Mtau_lowerbounds_klarge} (compared to Lemma~\ref{lem:Mtau_lowerbounds_ksmall}), 
one could replicate the proof of Lemma~\ref{lemma:prob_bound_1} to show that if $s^2 (\alpha + \beta)/2 > 2$, then \eqref{eq:prob_bound_2} holds for appropriate $\delta$. In order to prove an achievability result for the {\it correct} threshold $s^2 ( \alpha + \beta)/2 > 1$, we employ a more careful analysis in which we condition on typical values of $Y^+(\tau)$ and $Y^-(\tau)$. Similar ideas were used in previous work on achieving the information-theoretic threshold for exact recovery in correlated Erd\H{o}s-R\'{e}nyi graphs \cite{cullina2018exact,wu2021settling}. Since the proof of Lemma~\ref{lemma:prob_bound_2} is more involved, we defer it to Section~\ref{subsec:lemma_prob_bound}. 

\subsection{Proof of Theorem \ref{thm:graph_matching}}

The proof of Theorem~\ref{thm:graph_matching} now readily follows from Lemmas~\ref{lemma:prob_bound_1} and~\ref{lemma:prob_bound_2}. 
\begin{proof}[Proof of Theorem~\ref{thm:graph_matching}] 
By assumption we have that $s^2 ( \alpha + \beta)/2 > 1$. 
Let $\epsilon > 0$ be sufficiently small so that 
$s^2 (\alpha + \beta)/2 > (1+\epsilon)(1-\epsilon)^{-2}$, 
and hence the conditions of Lemmas~\ref{lemma:prob_bound_1} and~\ref{lemma:prob_bound_2} are satisfied. 
Let $\delta$ be given by Lemma~\ref{lemma:prob_bound_2} 
and let $\gamma := \min \left\{ \epsilon, \delta \right\}$. 

We first argue that we may assume that the event $\cF_{\epsilon}$ holds. 
We have that 
\[
\p \left( \wh{\pi} \neq \pi_{*} \right) 
= \p \left( \wh{\tau} \neq \tau_* \right) 
= \E \left[ \p \left( \wh{\tau} \neq \tau_{*} \, \middle| \boldsymbol{\sigma}, \tau_{*} \right) \right]
\leq \E \left[ \p \left( \wh{\tau} \neq \tau_{*} \, \middle| \boldsymbol{\sigma}, \tau_{*} \right) \mathbf{1}\left(\cF_\epsilon \right) \right] + \p \left( \cF_{\epsilon}^{c} \right).
\]
Since the community labels are i.i.d.\ uniform, we have that 
$\p \left( \cF_{\epsilon}^{c} \right) \to 0$ as $n \to \infty$,  
and thus it remains to be shown that 
$\E \left[ \p \left( \wh{\tau} \neq \tau_{*} \, \middle| \boldsymbol{\sigma}, \tau_{*} \right) \mathbf{1}\left(\cF_\epsilon \right) \right] \to 0$ as $n \to \infty$. 

If $\wh{\tau} \neq \tau_{*}$, 
then $\wh{\pi}$ must have some incorrectly matched vertices (since $\wh{\pi}$ is a permutation, it cannot have just a single mismatched vertex); 
in other words, we must have that $\wh{\tau} \in S_{k_{1}, k_{2}}$ for some $k_{1}$ and $k_{2}$ satisfying $k_{1} + k_{2} \geq 2$. 
Thus by Lemmas~\ref{lemma:prob_bound_1} and~\ref{lemma:prob_bound_2} we have that 
\[
\p \left( \wh{\tau} \neq \tau_{*} \, \middle| \boldsymbol{\sigma}, \tau_{*} \right) \mathbf{1}\left(\cF_\epsilon \right) 
= 
\sum_{k_{1}, k_{2} : k_{1} + k_{2} \geq 2} 
\p \left( \wh{\tau} \in S_{k_{1}, k_{2}} \, \middle| \boldsymbol{\sigma}, \tau_{*} \right) \mathbf{1}\left(\cF_\epsilon \right)
\leq 
\sum_{k_{1}, k_{2} : k_{1} + k_{2} \geq 2} n^{-\gamma \left( k_{1} + k_{2} \right)}.
\]
Note that there are $\ell+1$ different pairs $(k_{1}, k_{2})$ such that $k_{1} + k_{2} = \ell$. Therefore 
\[
\sum_{k_{1}, k_{2} : k_{1} + k_{2} \geq 2} n^{-\gamma \left( k_{1} + k_{2} \right)} 
\leq \sum_{\ell = 2}^{\infty} \left( \ell + 1 \right) n^{-\gamma \ell}
= n^{-2\gamma} \sum_{\ell = 0}^{\infty} \left( \ell + 3 \right) n^{-\gamma \ell} 
\leq C n^{-2\gamma}
\]
for some finite constant $C$ depending only on $\gamma$ (and hence only on $\alpha$, $\beta$, and $s$). 
Putting together the previous two displays and taking an expectation we obtain that 
\[
\E \left[ \p \left( \wh{\tau} \neq \tau_{*} \, \middle| \boldsymbol{\sigma}, \tau_{*} \right) \mathbf{1}\left(\cF_\epsilon \right) \right] 
\leq C n^{-2\gamma},
\]
which concludes the proof. 
\end{proof}

\subsection{Generating function analysis: Proof of Lemma \ref{lemma:pgf_bound}}
\label{sec:gen_fn_analysis}

\subsubsection{Cycle decomposition of the PGF}

We begin by presenting a convenient representation of $X(\tau)$ as a sum of independent random variables (conditioned on $\boldsymbol{\sigma}$ and $\tau_*$),  based on an appropriate cycle decomposition. 
Let $\cC$ be the cycle decomposition of the lifted permutation $\tau_*^{-1} \circ \tau$, 
and note that the pairs for which $\tau_*(e) = \tau(e)$ are the fixed points of $ \tau_*^{-1} \circ \tau$. 
We can then write 
\begin{align*}
X(\tau) = \sum\limits_{e \in \cE: \tau(e) \neq \tau_*(e)} \left( A_e B_{\tau_*(e)} - A_e B_{\tau(e)} \right) 
& = \sum\limits_{C \in \cC: |C| \ge 2} \ \sum\limits_{e \in C} \left( A_e B_{\tau_*(e)} - A_e B_{\tau(e)} \right) \\
& = : \sum\limits_{C \in \cC: |C| \ge 2} X_C(\tau). 
\end{align*}
Note that 
$\left( \tau_{*} \circ \tau_{*}^{-1} \circ \tau \right) (e) 
= \tau(e)$, 
and hence 
$\left\{ \tau(e) \right\}_{e \in C} = \left\{ \tau_{*}(e) \right\}_{e \in C}$. 
Therefore $X_{C}(\tau)$ is a function of 
$\left\{ \left( A_{e}, B_{\tau_{*}(e)} \right) \right\}_{e \in C}
= \left\{ \left( A_{e}, B_{e}' \right) \right\}_{e \in C}$. 
Given $\boldsymbol{\sigma}$ and $\tau_*$, these only depend on the entries of the adjacency matrix of the parent graph corresponding to pairs $e \in C$, 
as well as the sampling variables corresponding to pairs $e \in C$. 
Thus, due to the disjointness of cycles, 
the random variables 
$\left\{ X_{C} (\tau) \right\}_{C \in \cC: |C| \ge 2}$ 
are mutually independent (given $\boldsymbol{\sigma}$ and $\tau_*$). 
This implies, in particular, that for any $\theta \in \R$ we have that
\[
\E \left[ \theta^{X(\tau)} \, \middle| \, \boldsymbol{\sigma}, \tau_* \right] 
= \prod\limits_{C \in \cC: |C| \ge 2} \E \left [ \theta^{X_C(\tau)} \, \middle| \,  \boldsymbol{\sigma}, \tau_{*} \right ].
\]
A similar factorization holds for $\Phi^{\tau}$, which is the PGF of interest. First, define
\begin{align*}
Y_C^+(\tau) & : = \sum\limits_{e \in C \cap \cE^+(\boldsymbol{\sigma}): \tau(e) \neq \tau_*(e)} A_{e} B_{\tau_*(e)}, \\
Y_C^-(\tau) & : = \sum\limits_{e \in C \cap \cE^-(\boldsymbol{\sigma}): \tau(e) \neq \tau_*(e)} A_{e} B_{\tau_*(e)}.
\end{align*}
Again due to the disjointness of cycles, 
the triples 
$\left\{ \left( X_{C}(\tau), Y_{C}^{+}(\tau), Y_{C}^{-}(\tau) \right) \right\}_{C \in \cC: |C| \ge 2}$ 
are mutually independent (given $\boldsymbol{\sigma}$ and $\tau_*$), so we have the factorization
\begin{equation}
\label{eq:Phi_factorization}
\Phi^\tau ( \theta, \omega, \zeta) = \prod\limits_{C \in \cC: |C| \ge 2} \E \left[ \theta^{X_{C}(\tau)} \omega^{Y_{C}^{+}(\tau)} \zeta^{Y_{C}^{-}(\tau)} \, \middle| \,  \boldsymbol{\sigma} , \tau_{*} \right] = :\prod\limits_{C \in \cC : |C| \ge 2} \Phi_{C}^{\tau}(\theta, \omega, \zeta).
\end{equation}
Given the factorization in \eqref{eq:Phi_factorization}, a key intermediate goal is to bound $\Phi_{C}^{\tau}$ for $C$ such that $\left|C\right| \geq 2$. This is accomplished by the following lemma. 

\begin{lemma}
\label{lemma:pgf_of_a_cycle} 
Given $\boldsymbol{\sigma}$ and $\tau_{*}$, the following holds. 
Fix a lifted permutation $\tau$, 
and let $C$ be a cycle in $ \tau_*^{-1} \circ \tau$ such that $|C| \geq 2$. 
Then for any constants $\epsilon \in (0,1)$ and $1 \le \omega, \zeta \le 3$, it holds for all~$n$ large enough that 
\begin{equation}
\label{eq:pgf_of_a_cycle}
\Phi_C^\tau \left( \frac{1}{\sqrt{n}}, \omega, \zeta \right) \le \mathrm{exp} \left( - (1 - \epsilon)s^2 \left( \alpha \left|C \cap \cE^+(\boldsymbol{\sigma}) \right| + \beta \left|C \cap \cE^-(\boldsymbol{\sigma}) \right| \right) \frac{\log n}{n} \right).
\end{equation}
\end{lemma}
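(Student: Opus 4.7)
The plan is to use the cyclic structure of $C$ to express $\Phi_C^\tau$ as the trace of a product of $2 \times 2$ transfer matrices, and then to bound this trace by a combination of spectral analysis and a recursion on cycle length.

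First, label the elements of the cycle as $e_1, \ldots, e_k$ with $\tau_*^{-1}(\tau(e_i)) = e_{i+1}$ (indices mod $k$). Since $\tau(e_i) = \tau_*(e_{i+1})$ and $B_{\tau_*(e)} = B'_e$, setting $B_i := B'_{e_i}$ and $A_i := A_{e_i}$ gives $X_C(\tau) = \sum_i A_i(B_i - B_{i+1})$ and $Y_C^+(\tau) + Y_C^-(\tau) = \sum_i A_i B_i$, split by community. The pairs $\{(A_i, B_i)\}_{i=1}^k$ are mutually independent given $\boldsymbol{\sigma}$, since the parent-graph edge indicators and subsampling variables at distinct pairs are independent. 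Summing out the $A_i$'s after conditioning on the $b_i$'s yields
\[
\Phi_C^\tau(\theta, \omega, \zeta) = \Tr(N_1 N_2 \cdots N_k),
\]
where $N_i(b, b') = f_i(0, b) + f_i(1, b)(w_i\theta)^b \theta^{-b'}$ with $(f_i, w_i) = (\{p_{ab}\}, \omega)$ if $e_i \in \cE^+(\boldsymbol{\sigma})$ and $(\{q_{ab}\}, \zeta)$ otherwise.

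Next, I would analyze the spectra of $N^+$ and $N^-$. A direct computation of the four entries gives $\Tr(N^\pm) = 1 + s^2 p_\pm(w_\pm - 1)$ and $\det(N^\pm) = s^2 p_\pm w_\pm(1 + o(1))$, uniformly over $\omega, \zeta \in [1,3]$ and $\theta = 1/\sqrt n$ as $n \to \infty$, where $(p_\pm, w_\pm) \in \{(p, \omega), (q, \zeta)\}$. The resulting eigenvalues are
\[
\lambda^\pm_{\max} = 1 - s^2 p_\pm + o(s^2 p_\pm), \qquad \lambda^\pm_{\min} = s^2 p_\pm w_\pm + o(s^2 p_\pm).
\]
For a homogeneous cycle (e.g., $C \subset \cE^+(\boldsymbol{\sigma})$), $\Tr(N^k) = \lambda_{\max}^k + \lambda_{\min}^k$. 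Applying $\log(1-x) \leq -x$ gives $\lambda_{\max}^k \leq \exp(-(1-\epsilon/2) s^2 p_\pm k)$ for all $n$ large enough, while $\lambda_{\min}^k \leq (3 s^2 p_\pm)^k$ is negligible relative to the slack between $\exp(-(1-\epsilon/2) s^2 p_\pm k)$ and $\exp(-(1-\epsilon) s^2 p_\pm k)$ whenever $k \geq 2$. This establishes \eqref{eq:pgf_of_a_cycle} in the homogeneous case.

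The main obstacle is the mixed case, when the cycle contains both intra- and inter-community pairs. Since $N^+$ and $N^-$ do not commute, naive spectral bounds such as $\Tr(Q_k) \leq 2\|Q_k\|_{\mathrm{op}} \leq 2\prod_i \|N_i\|_{\mathrm{op}}$ lose a factor of $2$, which cannot be absorbed into the $(1-\epsilon)$ slack because the target bound is only $1 - O(\log n /n)$ per pair. To handle this, I would set up a recursion on the cycle length $k$, tracking the partial product $Q_j := N_1 \cdots N_j$ along its approximate Perron direction. Since the Perron eigenvectors of $N^+$ and $N^-$ agree to order $o(1)$ as $n \to \infty$, the trace should factorize to leading order as $\prod_i \lambda^\pm_{\max, i}$, yielding the desired bound once the error terms are shown to be negligible uniformly over $k$ (which can be as large as $\Theta(n)$). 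Carrying out this recursion cleanly is the technical heart of the matter and realizes the ``recursive bound as a function of the number of vertices'' strategy described in the paper's proof overview.
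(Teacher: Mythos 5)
Your transfer-matrix setup is correct and close in spirit to the paper's argument: writing $\Phi_C^\tau(\theta,\omega,\zeta) = \Tr(N_1\cdots N_{|C|})$ with $N_i(b,b') = f_i(0,b) + f_i(1,b)(w_i\theta)^b\theta^{-b'}$ is exactly the compact form of the paper's recursion~\eqref{eq:phi_k_recursion}, and your spectral computations $\lambda_{\max}^{\pm} = 1 - s^2 p_\pm + o(s^2 p_\pm)$, $\lambda_{\min}^{\pm} = s^2 p_\pm w_\pm + o(s^2 p_\pm)$ are correct. The homogeneous-cycle case is fine as you sketch it: $\lambda_{\max}^{|C|}$ gives the leading term and $\lambda_{\min}^{|C|}$ is easily absorbed in the slack between $(1-\epsilon/2)$ and $(1-\epsilon)$ for all $|C| \geq 2$.

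The gap is that you stop exactly where the lemma actually begins. The SBM is heterogeneous, so the mixed case is the whole content, and you handle it by declaring that you ``would set up a recursion'' and that ``carrying out this recursion cleanly is the technical heart of the matter.'' That is a plan, not a proof. Moreover the one quantitative claim you make in its support is too weak to carry it: saying the Perron eigenvectors of $N^+$ and $N^-$ ``agree to order $o(1)$'' does not by itself yield $\Tr(N_1\cdots N_{|C|}) \lesssim \prod_i\lambda_{\max,i}$, because the cycle length $|C|$ can be $\Theta(n)$. The per-step perturbation to the evolving direction is of order $R\theta^{-1} = \Theta(\log n/\sqrt{n})$ (coming from the $p_{10}\theta^{-1}$ and $q_{10}\theta^{-1}$ entries), and there are up to $\Theta(n)$ steps, so a naive accumulation estimate is off by a polynomial factor in $n$. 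What is actually needed is a \emph{uniform-in-$k$} control on the evolving vector, showing the ratio of its two coordinates stays within $1 + O(R\theta^{-1})$ of $1$ for all $k$ --- this is precisely the content of the paper's Lemma~\ref{lemma:phi_k_bound}, which is established by a careful induction that the perturbed direction is a contracted fixed region for both $N^+$ and $N^-$. Without that step (or an equivalent), your bound does not close, and you also do not address the boundary issues at $k=1$ (where the vector is far from $(1,1)$ when $i=1$) and $k=|C|$ (the wrap-around), which the paper has to treat by cases. As written, the proposal identifies the right structure but does not constitute a proof of the lemma.
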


The proof can be found in Section~\ref{subsubsec:pgf_of_a_cycle}. 
We remark that prior literature studying similar PGFs in different contexts (correlated Erd\H{o}s-R\'{e}nyi graphs or correlated Gaussian matrices) was able to derive \emph{exact} expressions for the PGF of a cycle due to the i.i.d.\ structure of the model considered~\cite{cullina2016improved,cullina2018exact,wu2021settling}. Deriving exact formulae for the PGF of a cycle in correlated stochastic block models is significantly more challenging due to the {\it heterogeneity} induced by different community labels in the cycle. Specifically, 
if the elements of the cycle are labelled differently, one obtains different formulae for the PGFs, even if the \emph{number} of inter-community and intra-community edges within the cycle are the same. The proof of Lemma~\ref{lemma:pgf_of_a_cycle} instead focuses on establishing simple, recursive bounds for the PGF, which ultimately leads to the right hand side in \eqref{eq:pgf_of_a_cycle}. We expect that this technique may be useful more generally in heterogeneous random graphs with independent structure, such as those generated from the Chung-Lu model~\cite{chung2006complex}. 

We now prove Lemma \ref{lemma:pgf_bound}, which follows readily from Lemma \ref{lemma:pgf_of_a_cycle}.

\begin{proof}[Proof of Lemma \ref{lemma:pgf_bound}]
Using \eqref{eq:Phi_factorization} and \eqref{eq:pgf_of_a_cycle}, we have the bound
\[
\Phi^\tau(\theta, \omega, \zeta) \le \mathrm{exp} \left( - (1 - \epsilon)s^2 \frac{\log n}{n} \sum\limits_{C \in \cC : |C| \ge 2} \left( \alpha \left| C \cap \cE^+(\boldsymbol{\sigma}) \right| + \beta \left| C \cap \cE^-(\boldsymbol{\sigma})  \right| \right) \right).
\]
Since the cycles of $\cC$ partition $\cE = \cE^+(\boldsymbol{\sigma}) \cup \cE^-(\boldsymbol{\sigma})$, we have that 
\[
\sum\limits_{C \in \cC : |C| \ge 2} \left| C \cap \cE^+(\boldsymbol{\sigma}) \right|  = \left| \cE^+(\boldsymbol{\sigma}) \right| - \sum\limits_{C \in \cC : |C| = 1} \left| C \cap \cE^+(\boldsymbol{\sigma}) \right| 
 = \left| \{ e \in \cE^+(\boldsymbol{\sigma}) : \tau(e) \neq \tau_*(e) \} \right| = M^+(\tau).
\]
Similarly, 
\[
\sum\limits_{C \in \cC : |C| \ge 2} \left| C \cap \cE^-(\boldsymbol{\sigma}) \right| = M^-(\tau),
\]
and the desired result immediately follows.
\end{proof}

\subsubsection{Bounding the PGF of a cycle: Proof of Lemma~\ref{lemma:pgf_of_a_cycle}}
\label{subsubsec:pgf_of_a_cycle}

In the following we assume that $\boldsymbol{\sigma}$ and $\tau_{*}$ are given. 
We also fix a lifted permutation $\tau$, 
as well as a cycle $C$ in $\tau_{*}^{-1} \circ \tau$ with $|C| \geq 2$. 
We enumerate the elements of $C$ by $e_{1}, \ldots, e_{|C|}$, 
where $\left( \tau_{*}^{-1} \circ \tau \right) \left( e_{k} \right) = e_{k+1}$ 
for every $k \in \left\{ 1, \ldots, |C| - 1 \right\}$, 
and $\left( \tau_{*}^{-1} \circ \tau \right) \left( e_{|C|} \right) = e_{1}$. 
For convenience of notation, we also define $e_{|C|+1} := e_{1}$, 
so that $\left( \tau_{*}^{-1} \circ \tau \right) \left( e_{k} \right) = e_{k+1}$ 
for every $1 \leq k \leq |C|$. 
Observe that, by applying $\tau_{*}$ to both sides of this equality, we have that 
\begin{equation}\label{eq:tau_tau*_switch}
\tau(e_{k}) 
= \left( \tau_{*} \circ \tau_{*}^{-1} \circ \tau \right) \left( e_{k} \right) 
= \tau_{*} \left( e_{k + 1} \right). 
\end{equation} 
Additionally, for $1 \leq k \leq |C|$, 
we set 
$\lambda_{k} := + 1$ if $e_{k} \in \cE^{+} \left( \boldsymbol{\sigma} \right)$ 
and 
$\lambda_{k} := - 1$ if $e_{k} \in \cE^{-} \left( \boldsymbol{\sigma} \right)$. 
Observe that for every $i,j \in \{0,1\}$ and $1 \leq k \leq |C|$ we have that 
\begin{equation}\label{eq:edge_pair_probabilities}
\p \left( \left( A_{e_{k}}, B_{\tau_{*}(e_{k})} \right) = (i,j) \, \middle| \, \boldsymbol{\sigma}, \tau_{*} \right) 
= 
\p \left( \left( A_{e_{k}}, B_{e_{k}}' \right) = (i,j) \, \middle| \, \boldsymbol{\sigma}, \tau_{*} \right) 
= 
\begin{cases}
p_{ij} &\text{ if } \lambda_{k} = +1, \\
q_{ij} &\text{ if } \lambda_{k} = -1.
\end{cases}
\end{equation}
Moreover, note that, given $\boldsymbol{\sigma}$ and $\tau_{*}$, 
the random pairs 
$\left\{ \left( A_{e_{k}}, B_{\tau_{*}(e_{k})} \right) \right\}_{k=1}^{|C|} 
= \left\{ \left( A_{e_{k}}, B_{e_{k}}' \right) \right\}_{k=1}^{|C|}$ 
are mutually independent. 
Next, for $1 \leq k \leq |C|$, define the random variables 
\begin{align*}
X_{k} &:= \sum_{\ell = 1}^{k} A_{e_{\ell}} B_{\tau_{*}(e_{\ell})} - A_{e_{\ell}} B_{\tau(e_{\ell})}, \\
Y_{k}^{+} &:= \sum_{\ell = 1}^{k} \mathbf{1} \left( \lambda_{\ell} = +1 \right) A_{e_{\ell}} B_{\tau_{*}(e_{\ell})}, \\
Y_{k}^{-} &:= \sum_{\ell = 1}^{k} \mathbf{1} \left( \lambda_{\ell} = -1 \right) A_{e_{\ell}} B_{\tau_{*}(e_{\ell})}.
\end{align*}
In particular, by construction we have that $X_{|C|} = X_{C}(\tau)$, $Y_{|C|}^{+} = Y_{C}^{+}(\tau)$, and $Y_{|C|}^{-} = Y_{C}^{-}(\tau)$.  
Due to~\eqref{eq:tau_tau*_switch}, 
as well as using $B_{\tau_{*}(e)} = B_{e}'$ for every $e \in \cE$, 
we may also write these quantities as 
\begin{align*}
X_{k} &= \sum_{\ell = 1}^{k} A_{e_{\ell}} B_{\tau_{*}(e_{\ell})} - A_{e_{\ell}} B_{\tau_{*}(e_{\ell+1})} 
= \sum_{\ell = 1}^{k} A_{e_{\ell}} B_{e_{\ell}}' - A_{e_{\ell}} B_{e_{\ell+1}}', \\
Y_{k}^{+} &= \sum_{\ell = 1}^{k} \mathbf{1} \left( \lambda_{\ell} = +1 \right) A_{e_{\ell}} B_{e_{\ell}}', \\
Y_{k}^{-} &= \sum_{\ell = 1}^{k} \mathbf{1} \left( \lambda_{\ell} = -1 \right) A_{e_{\ell}} B_{e_{\ell}}'.
\end{align*}
From the display above we also have that the increments satisfy 
\begin{align}
\label{eq:X_increment}
X_{k} - X_{k - 1} 
&= \begin{cases}
1 &\text{ if } \left(A_{e_{k}}, B'_{e_{k}} \right) = (1,1), B'_{e_{k + 1}} = 0, \\
-1 &\text{ if } \left(A_{e_{k}}, B'_{e_{k}} \right) = (1,0), B'_{e_{k + 1}} = 1, \\
0 &\text{ else};
\end{cases} \\
\label{eq:Y+_increment}
Y_{k}^{+} - Y_{k - 1}^{+} 
&= \begin{cases}
1 &\text{ if } \lambda_{k} = +1 \text{ and } \left(A_{e_{k}}, B'_{e_{k}} \right) = (1,1), \\
0 &\text{ else};
\end{cases} \\
\label{eq:Y-_increment}
Y_{k}^{-} - Y_{k - 1}^{-} 
&= \begin{cases}
1 &\text{ if } \lambda_{k} = -1 \text{ and } \left(A_{e_{k}}, B'_{e_{k}} \right) = (1,1), \\
0 &\text{ else}.
\end{cases}
\end{align}
Note, in particular, that none of these increments depend on $A_{e_{k+1}}$. 
Next, for $1 \leq k \leq |C|$ and $i,j,m \in \{0,1\}$, define the PGF 
\[
\phi_{k, ij, m} \left(\theta, \omega, \zeta \right) 
:= \E \left[ \theta^{X_{k}} \omega^{Y_{k}^{+}} \zeta^{Y_{k}^{-}} \, \middle| \, \boldsymbol{\sigma}, \tau_{*}, \left(A_{e_{1}}, B'_{e_{1}} \right) = (i,j), B'_{e_{k + 1}} = m \right],
\]
where we note that $\phi_{|C|,ij,m}$ is only defined when $j=m$, since $e_{|C|+1} \equiv e_{1}$. 
The following proposition relates these PGFs to $\Phi_{C}^{\tau}$, which is the PGF of interest. 
\begin{proposition}
\label{prop:pgf_equivalence}
Consider the setting described above. We then have that 
\[
\Phi_{C}^{\tau} \left(\theta, \omega, \zeta \right) 
= \begin{cases}
\sum\limits_{i,j \in \{0,1 \}} p_{ij} \phi_{|C|, ij, j}(\theta, \omega, \zeta) 
&\text{ if } \lambda_{1} = +1, \\
\sum\limits_{i,j \in \{0,1 \}} q_{ij} \phi_{|C|, ij, j}(\theta, \omega, \zeta) 
&\text{ if } \lambda_{1} = -1.
\end{cases}
\]
\end{proposition}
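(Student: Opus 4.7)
The plan is to prove Proposition~\ref{prop:pgf_equivalence} by a direct application of the law of total expectation, conditioning on the pair $\left( A_{e_{1}}, B'_{e_{1}} \right)$. The key observation is that, because we have set $e_{|C|+1} := e_{1}$, the random variable $B'_{e_{|C|+1}}$ appearing in the definition of $\phi_{|C|, ij, m}$ is literally the same random variable as $B'_{e_{1}}$. Hence conditioning on $\left( A_{e_{1}}, B'_{e_{1}} \right) = (i,j)$ automatically forces the value $B'_{e_{|C|+1}} = j$ in the last coordinate of the conditioning event defining $\phi$.

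Concretely, I would first write, by the tower property applied to $\Phi_C^{\tau}(\theta,\omega,\zeta) = \E\bigl[\theta^{X_{|C|}}\omega^{Y_{|C|}^{+}}\zeta^{Y_{|C|}^{-}} \mid \boldsymbol{\sigma}, \tau_{*}\bigr]$, the decomposition
\[
\Phi_C^{\tau}(\theta,\omega,\zeta)
= \sum_{i,j \in \{0,1\}} \p\!\left( \left(A_{e_{1}}, B'_{e_{1}}\right) = (i,j) \,\middle|\, \boldsymbol{\sigma}, \tau_{*} \right) \, \E\!\left[ \theta^{X_{|C|}}\omega^{Y_{|C|}^{+}}\zeta^{Y_{|C|}^{-}} \,\middle|\, \boldsymbol{\sigma}, \tau_{*}, \left(A_{e_{1}}, B'_{e_{1}}\right)=(i,j) \right].
\]
The prior probability $\p\!\left( \left(A_{e_{1}}, B'_{e_{1}}\right) = (i,j) \,\middle|\, \boldsymbol{\sigma}, \tau_{*} \right)$ is identified using \eqref{eq:edge_pair_probabilities}: it equals $p_{ij}$ when $\lambda_{1} = +1$ (i.e., $e_{1} \in \cE^{+}(\boldsymbol{\sigma})$), and $q_{ij}$ when $\lambda_{1} = -1$.

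For the conditional expectation on the right-hand side, I would invoke the identification $e_{|C|+1} = e_{1}$ to rewrite the event $\left\{ \left(A_{e_{1}}, B'_{e_{1}}\right) = (i,j) \right\}$ as $\left\{ \left(A_{e_{1}}, B'_{e_{1}}\right) = (i,j),\, B'_{e_{|C|+1}} = j \right\}$; the extra constraint is redundant but matches the form demanded by the definition of $\phi_{|C|, ij, m}$. This immediately yields
\[
\E\!\left[ \theta^{X_{|C|}}\omega^{Y_{|C|}^{+}}\zeta^{Y_{|C|}^{-}} \,\middle|\, \boldsymbol{\sigma}, \tau_{*}, \left(A_{e_{1}}, B'_{e_{1}}\right)=(i,j) \right]
= \phi_{|C|, ij, j}(\theta,\omega,\zeta),
\]
and substituting the two identifications back into the decomposition gives the claimed formula in each of the two cases $\lambda_1 = +1$ and $\lambda_1 = -1$.

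There is no real obstacle here; the statement is essentially a bookkeeping translation that sets up the recursive analysis of the $\phi_{k, ij, m}$'s to follow. The only subtlety worth flagging explicitly is the degenerate nature of the ``endpoint'' indices $j$ and $m$ at $k = |C|$: one must be careful that $\phi_{|C|, ij, m}$ is only well-defined when $j = m$, precisely because $B'_{e_{|C|+1}}$ and $B'_{e_{1}}$ coincide, which is exactly why only the ``diagonal'' terms $\phi_{|C|, ij, j}$ appear in the final formula.
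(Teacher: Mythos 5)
Your proof is correct and follows exactly the same route as the paper's: condition on $(A_{e_1}, B'_{e_1})$ via the tower property, use the identification $e_{|C|+1} \equiv e_1$ to see that this forces $B'_{e_{|C|+1}} = j$, and plug in the probabilities from~\eqref{eq:edge_pair_probabilities}. The paper states this more tersely, but the content is identical; your explicit remark about why only the diagonal terms $\phi_{|C|,ij,j}$ appear is a useful clarification.
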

\begin{proof}
First, recall that $e_{|C|+1} \equiv e_{1}$, 
so conditioning on 
$\left(A_{e_{1}}, B'_{e_{1}} \right)$ 
is the same as conditioning on 
$\left(A_{e_{1}}, B'_{e_{1}} \right)$ and $B'_{e_{|C|+1}}$. 
The claim then follows from the definition of $\Phi_{C}^{\tau}$ 
by conditioning on $\left(A_{e_{1}}, B'_{e_{1}} \right)$ 
and recalling the probabilities in~\eqref{eq:edge_pair_probabilities}. 
\end{proof}

The usefulness of defining the PGFs $\left\{ \phi_{k, ij, m} \right\}_{1 \leq k \leq |C|; i,j,m \in \{0,1\}}$ is that we can compute them recursively in $k$ in a straightforward manner. 
To see this, let $2 \leq k \leq |C| - 1$ and first consider the case of $m = 0$. 
By conditioning on $\left( A_{e_{k}}, B'_{e_{k}} \right)$ and using the tower rule, we have that 
\begin{multline*}
\phi_{k,ij,0} \left( \theta, \omega, \zeta \right) 
= \E_{\left( A_{e_{k}}, B'_{e_{k}} \right)} \left[ \E \left[ \theta^{X_{k - 1} + (X_{k} - X_{k - 1})} \omega^{Y_{k-1}^{+} + (Y_{k}^{+} - Y_{k-1}^{+})} \zeta^{Y_{k-1}^{-} + (Y_{k}^{-} - Y_{k-1}^{-})} \right. \right. \\ 
\left. \left. \, \middle| \, \boldsymbol{\sigma}, \tau_{*}, \left(A_{e_{1}}, B'_{e_{1}} \right) = (i,j), B'_{e_{k + 1}} = 0, \left(A_{e_{k}}, B'_{e_{k}} \right) \right] \right].
\end{multline*}
With the additional conditioning on $\left( A_{e_{k}}, B'_{e_{k}} \right)$, 
the increments $(X_{k} - X_{k-1})$, $(Y_{k}^{+} - Y_{k-1}^{+})$, and $(Y_{k}^{-} - Y_{k - 1}^{-})$ are now deterministic. 
Indeed, from~\eqref{eq:X_increment} we see that, since $B'_{e_{k + 1}} = 0$, the increment $X_{k} - X_{k - 1}$ is equal to 1 if $\left( A_{e_{k}}, B'_{e_{k}} \right) = (1,1)$, otherwise it is zero. 
Similar statements can be made about the other increments based on~\eqref{eq:Y+_increment} and~\eqref{eq:Y-_increment}. 
Pulling the contributions from the increments out and 
putting everything together, we have that 
\begin{equation}\label{eq:phi_k_rec0}
\phi_{k, ij, 0} = \begin{cases}
(p_{00} + p_{10}) \phi_{k - 1, ij, 0} + (p_{01} + p_{11} \theta \omega ) \phi_{k - 1, ij, 1}  &\text{ if } \lambda_{k} = +1, \\
(q_{00} + q_{10} ) \phi_{k - 1, ij, 0} + (q_{01} + q_{11} \theta \zeta ) \phi_{k - 1, ij, 1}  &\text{ if } \lambda_{k} = -1.
\end{cases}
\end{equation}
Repeating similar arguments for the case $m = 1$ gives the following recursion for $2 \leq k \leq |C| - 1$: 
\begin{equation}
\label{eq:phi_k_recursion}
\begin{pmatrix}
\phi_{k, ij, 0} \\
\phi_{k, ij, 1}
\end{pmatrix}
= 
\begin{cases}
\begin{pmatrix}
p_{00} + p_{10} & p_{01} + p_{11} \theta \omega  \\
p_{00} + p_{10} \theta^{-1} & p_{01} + p_{11} \omega 
\end{pmatrix}
\begin{pmatrix}
\phi_{k - 1, ij, 0} \\
\phi_{k - 1, ij, 1}
\end{pmatrix} &\text{ if } \lambda_k = +1, \\
& \\
\begin{pmatrix}
q_{00} + q_{10} & q_{01} + q_{11} \theta \zeta  \\
q_{00} + q_{10} \theta^{-1} & q_{01} + q_{11} \zeta \\
\end{pmatrix}
\begin{pmatrix}
\phi_{k - 1, ij, 0} \\
\phi_{k - 1, ij, 1}
\end{pmatrix} &\text{ if } \lambda_k = -1.
\end{cases}
\end{equation}
The appropriate part of the recursion also holds for $k = |C|$, 
noting that $\phi_{|C|,ij,m}$ is only defined for $j = m$. 
To complete the description of the recursion, we have to also give the initial conditions, which are the PGFs for $k=1$. These can be computed as follows: 
\begin{equation}
\label{eq:phi_1}
\phi_{1, ij, m} = 
\begin{cases}
\theta^{-1} &\text{ if } (i,j) = (1,0), m = 1, \\
\theta \omega &\text{ if } (i,j) = (1,1), m = 0, \lambda_{1} = +1, \\
\theta \zeta &\text{ if } (i,j) = (1,1), m = 0, \lambda_{1} = -1, \\
\omega &\text{ if } (i,j) = (1,1), m = 1, \lambda_{1} = +1, \\
\zeta &\text{ if } (i,j) = (1,1), m = 1, \lambda_{1} = -1, \\
1 & \text{ else}.
\end{cases}
\end{equation}
To analyze the recursion~\eqref{eq:phi_k_recursion}, we first derive a useful relation between 
$\phi_{k,ij,1}$ and $\phi_{k,ij,0}$, as stated in the following lemma. 
Define 
\begin{equation}\label{eq:R}
R := \max \left\{ \frac{1 + 2s}{1 - s} \left( \frac{p_{10}}{p_{00}} \right), \frac{1 + 2s}{1 - s} \left( \frac{q_{10}}{q_{00}} \right) \right\},
\end{equation}
and note that $R = O(\log(n) / n)$ for every fixed $s \in [0,1]$. (Since $p_{10}$ and $q_{10}$ each contain a factor of $(1-s)$, this holds also for $s = 1$.) 
\begin{lemma}
\label{lemma:phi_k_bound} 
Consider the setting described above. Then for any $2 \leq k \leq |C| - 1$, $i,j \in \{0,1\}$, $1 \le \omega, \zeta \le 3$, and $\theta$ satisfying $0 < \theta \leq 1 - R$, we have that 
\begin{equation}
\label{eq:phi_k_bound}
\phi_{k, ij, 1} \le \left(1 + R\theta^{-1} \right) \phi_{k, ij, 0}. 
\end{equation}
\end{lemma}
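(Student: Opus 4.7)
The plan is to work with the ratios $r_k := \phi_{k,ij,1}/\phi_{k,ij,0}$ and show by induction that $r_k \le \rho$ for all $2 \le k \le |C| - 1$, where $\rho := 1 + R\theta^{-1}$. The recursion~\eqref{eq:phi_k_recursion} takes the M\"obius-type form $r_k = f_{\lambda_k}(r_{k-1})$, where, with the shorthand $p^{+1} = p$, $p^{-1} = q$, $\omega^{+1} = \omega$, $\omega^{-1} = \zeta$,
\[
f_\lambda(r) := \frac{(p_{00}^\lambda + p_{10}^\lambda \theta^{-1}) + (p_{01}^\lambda + p_{11}^\lambda \omega^\lambda)\,r}{(p_{00}^\lambda + p_{10}^\lambda) + (p_{01}^\lambda + p_{11}^\lambda \theta \omega^\lambda)\,r}.
\]
A direct computation shows that $f_\lambda'(r)$ has the sign of $\theta\, p_{00}^\lambda p_{11}^\lambda \omega^\lambda - p_{01}^\lambda p_{10}^\lambda$; since $p_{10}p_{01}/(p_{00}p_{11}) = (1-s)^2 p / p_{00} \to 0$ as $n \to \infty$ while $\theta \omega^\lambda > 0$, the map $f_\lambda$ is monotone increasing on $[0,\infty)$ for all sufficiently large $n$.

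The core of the argument is the single inequality $f_\lambda(\theta^{-1}) \le \rho$ for both $\lambda \in \{+1,-1\}$. A direct algebraic simplification gives
\[
f_\lambda(\theta^{-1}) - 1 = \frac{(1-\theta)(p_{10}^\lambda + p_{11}^\lambda\omega^\lambda)}{\theta(p_{00}^\lambda + p_{10}^\lambda) + p_{01}^\lambda + p_{11}^\lambda\theta\omega^\lambda},
\]
so after clearing denominators and multiplying by $\theta$, the claim reduces to
\[
\theta(1-\theta)\bigl(p_{10}^\lambda + p_{11}^\lambda\omega^\lambda\bigr) \le R\bigl[\theta(p_{00}^\lambda + p_{10}^\lambda) + p_{01}^\lambda + p_{11}^\lambda\theta\omega^\lambda\bigr].
\]
The right-hand side is at least $R\theta p_{00}^\lambda$, while the bounds $(1-\theta) \le 1$, $\omega^\lambda \le 3$, together with the identity $p_{11}^\lambda/p_{10}^\lambda = s/(1-s)$, give $p_{10}^\lambda + p_{11}^\lambda\omega^\lambda \le p_{10}^\lambda(1+2s)/(1-s)$; the inequality then collapses to precisely the defining bound on $R$ in~\eqref{eq:R} (the maximum over the two cases accounts for both $\lambda = \pm 1$).

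The lemma now follows by induction on $k$. Inspection of~\eqref{eq:phi_1} shows $r_1 \in \{1, \theta^{-1}\}$ in every case, so $r_1 \le \theta^{-1}$; by monotonicity of $f_{\lambda_2}$ and the key claim, $r_2 = f_{\lambda_2}(r_1) \le f_{\lambda_2}(\theta^{-1}) \le \rho$, providing the base case. For the inductive step, note that the hypothesis $\theta \le 1-R$ is equivalent to $\rho \le \theta^{-1}$, so $r_{k-1} \le \rho$ implies $r_{k-1} \le \theta^{-1}$, and monotonicity again yields $r_k = f_{\lambda_k}(r_{k-1}) \le f_{\lambda_k}(\theta^{-1}) \le \rho$. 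The main obstacle I anticipate is exactly this base-case subtlety: the initial values in~\eqref{eq:phi_1} permit $r_1 = \theta^{-1}$, which strictly exceeds $\rho$, so a naive induction starting at $k=1$ with hypothesis $r \le \rho$ cannot close. Propagating the looser bound $r \le \theta^{-1}$ through the induction (legitimate only because $\rho \le \theta^{-1}$ under $\theta \le 1-R$) both resolves this and explains precisely why the factor $(1+2s)/(1-s)$ in the definition of $R$ is the threshold that makes the key inequality work.
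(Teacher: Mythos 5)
Your proposal is correct in the regime where it matters, but it takes a genuinely different route from the paper, and that route introduces an extra hypothesis that the lemma as stated does not carry. You cast the recursion \eqref{eq:phi_k_recursion} as a M\"obius map $r \mapsto f_\lambda(r)$ on the ratio $r_k = \phi_{k,ij,1}/\phi_{k,ij,0}$, prove $f_\lambda$ monotone increasing, establish the single endpoint inequality $f_\lambda(\theta^{-1}) \le 1 + R\theta^{-1}$, and propagate the looser bound $r_k \le \theta^{-1}$ through the induction. Your observation that the base case at $k=1$ can only give $r_1 \le \theta^{-1}$ (not $r_1 \le \rho$) and that the induction only closes because $\theta \le 1-R$ is equivalent to $\rho \le \theta^{-1}$ is exactly right, and it matches the structure of the paper's argument. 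Your reduction of the key endpoint inequality to the defining bound $(1+2s)/(1-s) \cdot p_{10}^\lambda/p_{00}^\lambda \le R$ is also correct; the paper's inductive step reduces to essentially the same bound, just reached through a different algebraic route.

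The gap is in the monotonicity claim. You show $f_\lambda'$ has the sign of $\theta p_{00}^\lambda p_{11}^\lambda \omega^\lambda - p_{01}^\lambda p_{10}^\lambda$ and argue this is positive for large $n$, but the conclusion depends on how $\theta$ scales relative to the edge probabilities: for $\theta$ of order $\log(n)/n$ (which the remark after Lemma \ref{lemma:pgf_bound} explicitly contemplates, and which is well within $(0,1-R]$), the two terms are comparable and the sign can be negative depending on $\omega, s, \alpha$. The paper's proof never invokes monotonicity: it directly rearranges the recursion to
$(b - \rho d)\phi_{k-1,ij,1} \le (\rho c - a)\phi_{k-1,ij,0}$ with $\rho = 1+R\theta^{-1}$,
notes the inequality is trivial when $b - \rho d \le 0$, and otherwise bounds $b - \rho d$ by $3p_{11}^\lambda$ and applies the induction hypothesis, all valid for every $\theta \in (0, 1-R]$ with no restriction on $n$. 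Your argument is fine for the actual use case ($\theta = 1/\sqrt{n}$, $n$ large), but as a proof of the lemma as stated it needs either the paper's case split on the sign of $b - \rho d$, or an explicit additional hypothesis ensuring monotonicity.
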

\begin{proof}
Our proof is by induction on $k$. 
We first check that the base case holds for all $i,j \in \{0,1\}$. 
For $(i,j) = (0,0)$ or $(i,j) = (0,1)$, 
we may take the base case to be $k = 1$. 
Indeed, in these cases we have, from~\eqref{eq:phi_1}, that 
$\phi_{1,ij,0} = \phi_{1,ij,1} = 1$, 
so~\eqref{eq:phi_k_bound} holds. 
For $(i,j) = (1,0)$ or $(i,j) = (1,1)$, 
we shall take the base case to be $k=2$. 

Consider now the case of $(i,j) = (1,0)$. 
From~\eqref{eq:phi_k_recursion} and~\eqref{eq:phi_1} we then have that 
\begin{align}
\phi_{2, 10, 0} 
&= \begin{cases}
p_{00} + p_{10} + p_{01} \theta^{-1}  + p_{11} \omega   &\text{ if } \lambda_2 = + 1, \\
q_{00} + q_{10} + q_{01} \theta^{-1} + q_{11} \zeta  &\text{ if } \lambda_2 = -1;
\end{cases} \label{eq:phi_2100}\\
\phi_{2, 10, 1} 
&= \begin{cases}
p_{00}  + p_{10} \theta^{-1} + p_{01} \theta^{-1} + p_{11}  \theta^{-1} \omega  &\text{ if } \lambda_2 = + 1, \\
q_{00} + q_{10} \theta^{-1} + q_{01} \theta^{-1} + q_{11} \theta^{-1} \zeta  &\text{ if } \lambda_2 = - 1.
\end{cases} \notag
\end{align}
First consider the case of $\lambda_{2} = +1$. 
Using $\omega \leq 3$, we have that 
$\phi_{2,10,1} \leq p_{00} + p_{10} \theta^{-1} + p_{01} \theta^{-1} + 3 p_{11} \theta^{-1}$. 
Now using $p_{11} = \frac{s}{1-s} p_{10}$ and simplifying, we have that 
$\phi_{2,10,1} \leq p_{00} + p_{01} \theta^{-1} + \frac{1+2s}{1-s} p_{10} \theta^{-1}$. 
By expanding the product it can be verified that 
\[
p_{00} + p_{01} \theta^{-1} + \frac{1+2s}{1-s} p_{10} \theta^{-1} 
\leq 
\left( 1 + \frac{(1 + 2s)p_{10}}{(1 - s) p_{00}} \theta^{-1} \right) \phi_{2, 10, 0},
\]
which concludes the check of~\eqref{eq:phi_k_bound} in this case. 
Analogously, if $\lambda_{2} = -1$, then 
\[
\phi_{2,10,1} \leq \left( 1 + \frac{(1 + 2s)q_{10}}{(1 - s) q_{00}} \theta^{-1} \right) \phi_{2, 10, 0},
\]
concluding the check of the base case for $(i,j) = (1,0)$. 

Finally, consider the case of $(i,j) = (1,1)$. 
From~\eqref{eq:phi_k_recursion} and~\eqref{eq:phi_1} we then have that 
\begin{align}
\phi_{2, 11, 0} 
&= \begin{cases}
p_{00} \theta \omega + p_{10} \theta \omega + p_{01} \omega + p_{11} \theta \omega^2 &\text{ if } \lambda_1 = \lambda_2 = +1, \\ 
q_{00} \theta \omega + q_{10} \theta \omega + q_{01} \omega + q_{11} \theta \zeta \omega &\text{ if } \lambda_1 = +1, \lambda_2 = -1, \\
p_{00} \theta \zeta + p_{10} \theta \zeta + p_{01} \zeta + p_{11} \theta \zeta \omega &\text{ if } \lambda_1 = -1, \lambda_2 = + 1, \\
q_{00} \theta \zeta + q_{10} \theta \zeta + q_{01} \zeta + q_{11} \theta \zeta^2 &\text{ if } \lambda_1 = \lambda_2 = -1;
\end{cases} \label{eq:phi_2110}\\
\phi_{2, 11, 1} & = \begin{cases}
p_{00} \theta \omega + p_{10} \omega + p_{01} \omega + p_{11} \omega^2 &\text{ if } \lambda_1 = \lambda_2 = +1, \\
q_{00} \theta \omega + q_{10} \omega + q_{01} \omega + q_{11} \zeta \omega &\text{ if } \lambda_1 = + 1, \lambda_2 = -1, \\
p_{00} \theta \zeta + p_{10} \zeta + p_{01} \zeta + p_{11} \zeta \omega &\text{ if } \lambda_1 = - 1, \lambda_2 = + 1, \\
q_{00} \theta \zeta + q_{10} \zeta + q_{01} \zeta + q_{11} \zeta^2 &\text{ if } \lambda_1 = \lambda_2 = - 1.
\end{cases} \notag
\end{align}
Now if $\lambda_{1} = +1$, 
then
we have that 
$\phi_{2,11,1} = \theta \omega \phi_{2,10,1}$ 
and 
$\phi_{2,11,0} = \theta \omega \phi_{2,10,0}$, 
so~\eqref{eq:phi_k_bound} follows from the previous paragraph. 
If $\lambda_{1} = -1$, then we have that 
$\phi_{2,11,1} = \theta \zeta \phi_{2,10,1}$ 
and 
$\phi_{2,11,0} = \theta \zeta \phi_{2,10,0}$,
so~\eqref{eq:phi_k_bound} again follows from the previous paragraph.

Now that we have fully checked all base cases, we turn to the inductive step. 
Suppose that $\lambda_{k} = +1$; the other case where $\lambda_{k} = -1$ is similar (with $\{p_{ij}\}$ replaced with $\{ q_{ij} \}$ and $\omega$ replaced with $\zeta$). 
From the recursion~\eqref{eq:phi_k_recursion}, we have that~\eqref{eq:phi_k_bound} is equivalent to 
\begin{multline*}
\left(p_{00} + p_{10} \theta^{-1} \right) \phi_{k-1,ij,0} + (p_{01} + p_{11}  \omega) \phi_{k-1,ij,1} \\
\leq 
\left( 1 + R \theta^{-1} \right) 
\left( (p_{00} + p_{10}) \phi_{k-1,ij,0} + (p_{01} + p_{11} \theta \omega) \phi_{k-1,ij,1} \right),
\end{multline*}
which in turn is equivalent to 
\begin{multline*}
\left( p_{01} + p_{11} \omega - \left( 1 + R \theta^{-1} \right) \left( p_{01} + p_{11} \theta \omega \right) \right) 
\phi_{k-1,ij,1} \\
\leq 
\left( \left( 1 + R \theta^{-1} \right) \left( p_{00} + p_{10} \right) - \left( p_{00} + p_{10} \theta^{-1} \right) \right) 
\phi_{k-1,ij,0}.
\end{multline*}
Note that the coefficient on the left hand side satisfies 
\[
p_{01} + p_{11} \omega - \left( 1 + R \theta^{-1} \right) \left( p_{01} + p_{11} \theta \omega \right) 
\leq p_{11} \omega 
\leq 3 p_{11},
\]
so it suffices to show that 
\[
3 p_{11} \phi_{k-1,ij,1} 
\leq 
\left( \left( 1 + R \theta^{-1} \right) \left( p_{00} + p_{10} \right) - \left( p_{00} + p_{10} \theta^{-1} \right) \right) 
\phi_{k-1,ij,0}.
\]
By the induction hypothesis we have that 
$\phi_{k-1,ij,1} \leq \left( 1 + R \theta^{-1} \right) \phi_{k-1,ij,0}$, 
so it suffices to show that 
\begin{equation}\label{eq:phi_k_rec_bound_to_show}
3 \left( 1 + R \theta^{-1} \right) p_{11}
\leq 
\left( 1 + R \theta^{-1} \right) \left( p_{00} + p_{10} \right) - \left( p_{00} + p_{10} \theta^{-1} \right).
\end{equation}
The assumption $\theta \leq 1 - R$ implies that 
$1 + R\theta^{-1} \leq \theta^{-1}$. 
Using this and also that 
$p_{11} = \frac{s}{1-s} p_{10}$, 
we may bound the left hand side of~\eqref{eq:phi_k_rec_bound_to_show} as follows: 
\begin{align}
3 \left( 1 + R \theta^{-1} \right) p_{11}
&\leq \frac{3s}{1-s} p_{10} \theta^{-1} 
= \left( \frac{1+2s}{1-s} \cdot \frac{p_{10}}{p_{00}} \cdot p_{00} - p_{10} \right) \theta^{-1} \notag \\
&\leq \left( R p_{00} - p_{10} \right) \theta^{-1} 
= R\theta^{-1} p_{00} - p_{10} \theta^{-1}, \label{eq:phi_k_rec_final}
\end{align}
where we also used the definition of $R$. 
The right hand side of~\eqref{eq:phi_k_rec_final} is at most the right hand side of~\eqref{eq:phi_k_rec_bound_to_show}, which concludes the proof. 
\end{proof}

We are now ready to put everything together to prove Lemma~\ref{lemma:pgf_of_a_cycle}. 

\begin{proof}[Proof of Lemma~\ref{lemma:pgf_of_a_cycle}] 
Set $\theta := 1/\sqrt{n}$. 
Since $R$, as defined in~\eqref{eq:R}, satisfies $R = O \left( \log(n) / n \right)$, 
the condition $0 < \theta \leq 1 - R$ of Lemma~\ref{lemma:phi_k_bound} is satisfied for all $n$ large enough. 
Moreover, since $R\theta^{-1} = O \left( \log(n) / \sqrt{n} \right)$, 
we can make $R\theta^{-1}$ arbitrarily small for $n$ large enough. 
To simplify notation in what we follows, we write 
\begin{align*}
    C^{+} &:= \left| \left\{ 1 \leq k \leq |C| : \lambda_{k} = +1 \right\} \right| = \left| C \cap \cE^{+} \left( \boldsymbol{\sigma} \right) \right|, \\
    C^{-} &:= \left| \left\{ 1 \leq k \leq |C| : \lambda_{k} = -1 \right\} \right| = \left| C \cap \cE^{-} \left( \boldsymbol{\sigma} \right) \right|.
\end{align*}

Our goal is to bound $\Phi_{C}^{\tau}$. 
Due to Proposition~\ref{prop:pgf_equivalence}, 
it suffices to bound the PGFs 
$\phi_{|C|, ij,j}$ for $i,j \in \{0,1\}$. 
To do this, we use the recursion~\eqref{eq:phi_k_recursion}, 
as well as Lemma~\ref{lemma:phi_k_bound}. 
Ideally, we would like to present a streamlined argument that works for all $i,j \in \left\{ 0, 1 \right\}$ simultaneously. 
However, there are minor differences in boundary cases for different values of $i,j \in \{0,1\}$. 
Specifically, while the bound~\eqref{eq:phi_k_bound} in Lemma~\ref{lemma:phi_k_bound} holds for all $2 \leq k \leq |C| - 1$ and all $i,j \in \{0,1\}$, 
for $k=1$ it only holds when $i = 0$ (see the beginning of the proof of Lemma~\ref{lemma:phi_k_bound}). 
Furthermore, $\phi_{|C|,ij,m}$ is only defined for $j = m$. 
For these reasons, we bound $\phi_{|C|, ij,j}$ 
separately for each $i,j \in \{0,1\}$ (while minimizing repeated arguments).

We first consider the case of $(i,j) = (0,0)$ and bound $\phi_{|C|,00,0}$. 
In this case the bound~\eqref{eq:phi_k_bound} in Lemma~\ref{lemma:phi_k_bound} holds for all $1 \leq k \leq |C| - 1$. 
Noting that for $(i,j) = (0,0)$ the recursion~\eqref{eq:phi_k_rec0} holds for all $2 \leq k \leq |C|$, by plugging in~\eqref{eq:phi_k_bound} we obtain that the following holds for all $2 \leq k \leq |C|$: 
\begin{equation}\label{eq:phi_k00_rec}
\phi_{k, 00, 0} 
\leq
\begin{cases}
\left(p_{00} + p_{10} + \left( 1 + R\theta^{-1} \right) (p_{01} + p_{11} \theta \omega ) \right) \phi_{k - 1, 00, 0} &\text{ if } \lambda_{k} = +1, \\
\left(q_{00} + q_{10} + \left( 1 + R\theta^{-1} \right) (q_{01} + q_{11} \theta \zeta ) \right) \phi_{k - 1, 00, 0}  &\text{ if } \lambda_{k} = -1.
\end{cases}
\end{equation}
To simplify this recursion, 
first note that $p_{01} + p_{11} \theta \omega = (1+o(1)) p_{01}$ as $n \to \infty$, 
since $p_{01}$ and $p_{11}$ are on the same order, $\omega$ is bounded, and $\theta = o(1)$. Also recall that $R\theta^{-1} = o(1)$. 
Consequently we have that 
\[
p_{00} + p_{10} + \left( 1 + R\theta^{-1} \right) (p_{01} + p_{11} \theta \omega )
= 
p_{00} + p_{10} + (1+o(1)) p_{01}
= 1 - (1+o(1)) p_{11}
\]
as $n \to \infty$. 
Thus for any fixed $\epsilon \in (0,1)$ we have, for all $n$ large enough, that 
\[
p_{00} + p_{10} + \left( 1 + R\theta^{-1} \right) (p_{01} + p_{11} \theta \omega )
\leq 
1 - (1-\epsilon) p_{11} 
\leq 
\exp \left( - (1-\epsilon) p_{11} \right),
\]
where we have used the inequality $1 + x \leq \exp(x)$. 
Similarly we have that 
\[
q_{00} + q_{10} + \left( 1 + R\theta^{-1} \right) (q_{01} + q_{11} \theta \zeta )
\leq 
\exp \left( - (1-\epsilon) q_{11} \right)
\]
for all $n$ large enough. 
Plugging these inequalities back into~\eqref{eq:phi_k00_rec}, 
for all $n$ large enough the following holds for all $2 \leq k \leq |C|$: 
\begin{equation}\label{eq:phi_k00_rec_simple}
\phi_{k, 00, 0} 
\leq
\begin{cases}
\exp \left( - (1-\epsilon) p_{11} \right) \phi_{k - 1, 00, 0} &\text{ if } \lambda_{k} = +1, \\
\exp \left( - (1-\epsilon) q_{11} \right) \phi_{k - 1, 00, 0}  &\text{ if } \lambda_{k} = -1.
\end{cases}
\end{equation}
Iterating this inequality and noting that $\phi_{1,00,0} = 1$, 
we have thus obtained that
\begin{equation}\label{eq:phi_C00_bound}
\phi_{|C|, 00, 0} 
\leq
\begin{cases}
\exp \left( - (1-\epsilon) \left\{ \left( C^{+} - 1 \right) p_{11} + C^{-} q_{11} \right\} \right) &\text{ if } \lambda_{1} = +1, \\
\exp \left( - (1-\epsilon) \left\{ C^{+} p_{11} + \left( C^{-} - 1 \right) q_{11} \right\} \right) &\text{ if } \lambda_{1} = -1.
\end{cases}
\end{equation}

Next, we turn to the case of $(i,j) = (0,1)$, with the goal of bounding $\phi_{|C|,01,1}$. 
First, we shall bound $\phi_{|C|-1,01,0}$. 
By the same arguments as before (using the recursion and Lemma~\ref{lemma:phi_k_bound}), we have that, for all $n$ large enough, the following holds for all $2 \leq k \leq |C|-1$: 
\begin{equation}\label{eq:phi_k01_rec_simple}
\phi_{k, 01, 0} 
\leq
\begin{cases}
\exp \left( - (1-\epsilon) p_{11} \right) \phi_{k - 1, 01, 0} &\text{ if } \lambda_{k} = +1, \\
\exp \left( - (1-\epsilon) q_{11} \right) \phi_{k - 1, 01, 0}  &\text{ if } \lambda_{k} = -1.
\end{cases}
\end{equation}
Iterating this inequality and noting that $\phi_{1,01,0} = 1$, 
we thus have that
\[
\phi_{|C|-1,01,0} \leq 
\begin{cases}
\exp \left( - (1-\epsilon) \left\{ \left( C^{+} - 2 \right) p_{11} + C^{-} q_{11} \right\} \right) &\text{ if } \lambda_{1} = \lambda_{|C|} = +1, \\
\exp \left( - (1-\epsilon) \left\{ \left( C^{+} - 1 \right) p_{11} + \left( C^{-} - 1 \right) q_{11} \right\} \right) &\text{ if } \lambda_{1} \cdot \lambda_{|C|} = -1, \\
\exp \left( - (1-\epsilon) \left\{ C^{+} p_{11} + \left( C^{-} - 2 \right) q_{11} \right\} \right) &\text{ if } \lambda_{1} = \lambda_{|C|} = -1.
\end{cases}
\]
Recall that $p_{11}, q_{11} = O \left( \log(n) / n \right)$, 
and so 
$\exp(p_{11}), \exp(q_{11}) = 1 + O \left( \log(n) / n \right)$. 
Therefore regardless of the value of $\lambda_{|C|}$, we have that 
\begin{equation}\label{eq:phi_C01_bound_almost}
\phi_{|C|-1,01,0} \leq 
\left( 1 + O \left( \tfrac{\log n}{n} \right) \right) \cdot 
\begin{cases}
\exp \left( - (1-\epsilon) \left\{ \left( C^{+} - 1 \right) p_{11} + C^{-} q_{11} \right\} \right) &\text{ if } \lambda_{1} = +1, \\
\exp \left( - (1-\epsilon) \left\{ C^{+} p_{11} + \left( C^{-} - 1 \right) q_{11} \right\} \right) &\text{ if } \lambda_{1} = -1.
\end{cases}
\end{equation}
Now turning to $\phi_{|C|,01,1}$, the recursion and Lemma~\ref{lemma:phi_k_bound} together give that 
\[
\phi_{|C|,01,1} \leq 
\begin{cases}
\left( p_{00} + p_{10} \theta^{-1} + \left( 1 + R \theta^{-1} \right) \left( p_{01} + p_{11} \omega \right) \right) \phi_{|C|-1,01,0} &\text{ if } \lambda_{|C|} = +1, \\
\left( q_{00} + q_{10} \theta^{-1} + \left( 1 + R \theta^{-1} \right) \left( q_{01} + q_{11} \zeta \right) \right) \phi_{|C|-1,01,0} &\text{ if } \lambda_{|C|} = -1.
\end{cases}
\]
Recalling the values of the parameters in these coefficients, regardless of the value of $\lambda_{|C|}$ we have that 
\begin{equation}\label{eq:phi_C011}
\phi_{|C|,01,1} \leq 
\left( 1 + O \left( \tfrac{\log n}{\sqrt{n}} \right) \right) \phi_{|C|-1,01,0}.
\end{equation}
Plugging this back into~\eqref{eq:phi_C01_bound_almost}, we obtain that 
\begin{equation}\label{eq:phi_C01_bound}
\phi_{|C|,01,1} \leq 
\left( 1 + O \left( \tfrac{\log n}{\sqrt{n}} \right) \right) \cdot 
\begin{cases}
\exp \left( - (1-\epsilon) \left\{ \left( C^{+} - 1 \right) p_{11} + C^{-} q_{11} \right\} \right) &\text{ if } \lambda_{1} = +1, \\
\exp \left( - (1-\epsilon) \left\{ C^{+} p_{11} + \left( C^{-} - 1 \right) q_{11} \right\} \right) &\text{ if } \lambda_{1} = -1.
\end{cases}
\end{equation}

Next, we turn to the case of $(i,j) = (1,0)$, with the goal of bounding $\phi_{|C|,10,0}$. 
Note that in this case the bound in~\eqref{eq:phi_k_bound} only holds for $2 \leq k \leq |C| -1$. 
By the same arguments as before (using the recursion and Lemma~\ref{lemma:phi_k_bound}), we have that, for all $n$ large enough, the following holds for all $3 \leq k \leq |C|$: 
\begin{equation}\label{eq:phi_k10_rec_simple}
\phi_{k, 10, 0} 
\leq
\begin{cases}
\exp \left( - (1-\epsilon) p_{11} \right) \phi_{k - 1, 10, 0} &\text{ if } \lambda_{k} = +1, \\
\exp \left( - (1-\epsilon) q_{11} \right) \phi_{k - 1, 10, 0}  &\text{ if } \lambda_{k} = -1.
\end{cases}
\end{equation}
Iterating this inequality gives that 
\[
\phi_{|C|,10,0} \leq 
\phi_{2,10,0} \cdot 
\begin{cases}
\exp \left( - (1-\epsilon) \left\{ \left( C^{+} - 2 \right) p_{11} + C^{-} q_{11} \right\} \right) &\text{ if } \lambda_{1} = \lambda_{2} = +1, \\
\exp \left( - (1-\epsilon) \left\{ \left( C^{+} - 1 \right) p_{11} + \left( C^{-} - 1 \right) q_{11} \right\} \right) &\text{ if } \lambda_{1} \cdot \lambda_{2} = -1, \\
\exp \left( - (1-\epsilon) \left\{ C^{+} p_{11} + \left( C^{-} - 2 \right) q_{11} \right\} \right) &\text{ if } \lambda_{1} = \lambda_{2} = -1.
\end{cases}
\]
From~\eqref{eq:phi_2100} we have that 
$\phi_{2,10,0} = 1 + O \left( \log(n) / \sqrt{n} \right)$, regardless of the value of $\lambda_{2}$. 
Using again that $\exp(p_{11}), \exp(q_{11}) = 1 + O \left( \log(n) / n \right)$, 
we thus have that 
\begin{equation}\label{eq:phi_C10_bound}
\phi_{|C|,10,0} \leq 
\left( 1 + O \left( \tfrac{\log n}{\sqrt{n}} \right) \right) \cdot 
\begin{cases}
\exp \left( - (1-\epsilon) \left\{ \left( C^{+} - 1 \right) p_{11} + C^{-} q_{11} \right\} \right) &\text{ if } \lambda_{1} = +1, \\
\exp \left( - (1-\epsilon) \left\{ C^{+} p_{11} + \left( C^{-} - 1 \right) q_{11} \right\} \right) &\text{ if } \lambda_{1} = -1.
\end{cases}
\end{equation}

Finally, we turn to the case of $(i,j) = (1,1)$, with the goal of bounding $\phi_{|C|,11,1}$. Similarly to~\eqref{eq:phi_C011}, we have that 
\begin{equation}\label{eq:C111}
\phi_{|C|,11,1} 
\leq 
\left( 1 + O \left( \tfrac{\log n}{\sqrt{n}} \right) \right) 
\phi_{|C|-1,11,0},
\end{equation}
and so in the following we bound $\phi_{|C|-1,11,0}$. 
By the recursion and Lemma~\ref{lemma:phi_k_bound}, we have that, for all $n$ large enough, the following holds for all $3 \leq k \leq |C|-1$:
\begin{equation}\label{eq:phi_k11_rec_simple}
\phi_{k, 11, 0} 
\leq
\begin{cases}
\exp \left( - (1-\epsilon) p_{11} \right) \phi_{k - 1, 11, 0} &\text{ if } \lambda_{k} = +1, \\
\exp \left( - (1-\epsilon) q_{11} \right) \phi_{k - 1, 11, 0}  &\text{ if } \lambda_{k} = -1.
\end{cases}
\end{equation}
Iterating this inequality gives that 
\[
\phi_{|C|-1,11,0} \leq 
\phi_{2,11,0} \cdot 
\begin{cases}
\exp \left( - (1-\epsilon) \left\{ \left( C^{+} - 3 \right) p_{11} + C^{-} q_{11} \right\} \right) &\text{ if } \lambda_{1} = \lambda_{2} = \lambda_{|C|} = +1, \\
\exp \left( - (1-\epsilon) \left\{ \left( C^{+} - 2 \right) p_{11} + \left( C^{-} - 1 \right) q_{11} \right\} \right) &\text{ if } \left| \left\{ i \in \{1,2,|C|\} : \lambda_{i} = +1 \right\} \right| = 2, \\
\exp \left( - (1-\epsilon) \left\{ \left( C^{+} - 1 \right) p_{11} + \left( C^{-} - 2 \right) q_{11} \right\} \right) &\text{ if } \left| \left\{ i \in \{1,2,|C|\} : \lambda_{i} = +1 \right\} \right| = 1, \\
\exp \left( - (1-\epsilon) \left\{ C^{+} p_{11} + \left( C^{-} - 3 \right) q_{11} \right\} \right) &\text{ if } \lambda_{1} = \lambda_{2} = \lambda_{|C|} = -1.
\end{cases}
\]
From~\eqref{eq:phi_2110} we have that $\phi_{2,11,0} = O \left( 1 / \sqrt{n} \right)$, regardless of the values of $\lambda_{1}$ and $\lambda_{2}$. 
Using again that $\exp(p_{11}), \exp(q_{11}) = 1 + O \left( \log(n) / n \right)$, 
we thus have that 
\[
\phi_{|C|-1,11,0} \leq 
O \left( \tfrac{1}{\sqrt{n}} \right) \cdot 
\begin{cases}
\exp \left( - (1-\epsilon) \left\{ \left( C^{+} - 1 \right) p_{11} + C^{-} q_{11} \right\} \right) &\text{ if } \lambda_{1} = +1, \\
\exp \left( - (1-\epsilon) \left\{ C^{+} p_{11} + \left( C^{-} - 1 \right) q_{11} \right\} \right) &\text{ if } \lambda_{1} = -1.
\end{cases}
\]
Plugging this back into~\eqref{eq:C111}, we thus have that 
\begin{equation}\label{eq:phi_C11_bound}
\phi_{|C|,11,1} \leq 
O \left( \tfrac{1}{\sqrt{n}} \right) \cdot 
\begin{cases}
\exp \left( - (1-\epsilon) \left\{ \left( C^{+} - 1 \right) p_{11} + C^{-} q_{11} \right\} \right) &\text{ if } \lambda_{1} = +1, \\
\exp \left( - (1-\epsilon) \left\{ C^{+} p_{11} + \left( C^{-} - 1 \right) q_{11} \right\} \right) &\text{ if } \lambda_{1} = -1.
\end{cases}
\end{equation}

We have now computed bounds for $\phi_{|C|,ij,j}$ for all $i,j \in \{0,1\}$, 
and so we are now ready to bound $\Phi_{C}^{\tau}$. 
Suppose that $\lambda_{1} = +1$; the other case is analogous. 
By Proposition~\ref{prop:pgf_equivalence} and the bounds in~\eqref{eq:phi_C00_bound},~\eqref{eq:phi_C01_bound},~\eqref{eq:phi_C10_bound}, and~\eqref{eq:phi_C11_bound}, 
we have that 
\begin{align*}
\Phi_{C}^{\tau} 
&= p_{00} \phi_{|C|,00,0} + p_{01} \phi_{|C|,01,1} + p_{10} \phi_{|C|,10,0} + p_{11} \phi_{|C|,11,1} \\
&\leq 
\exp \left( - (1-\epsilon) \left\{ \left( C^{+} - 1 \right) p_{11} + C^{-} q_{11} \right\} \right) 
\cdot 
\left\{ p_{00} + \left( 1 + O \left( \tfrac{\log n}{\sqrt{n}} \right) \right) \left( p_{01} + p_{10} \right) + O \left( \tfrac{1}{\sqrt{n}} \right) p_{11} \right\}.
\end{align*} 
Observe that 
\[
p_{00} + \left( 1 + O \left( \tfrac{\log n}{\sqrt{n}} \right) \right) \left( p_{01} + p_{10} \right) + O \left( \tfrac{1}{\sqrt{n}} \right) p_{11} 
= 1 - p_{11} + O \left( \tfrac{\log^{2}(n)}{n^{3/2}} \right),
\]
so for all $n$ large enough this is at most $1 - (1-\epsilon) p_{11} \leq \exp \left( - (1-\epsilon) p_{11} \right)$. 
Plugging this back into the inequality above, we obtain that 
\[
\Phi_{C}^{\tau} 
\leq 
\exp \left( - \left( 1 - \epsilon \right) \left\{ C^{+} p_{11} + C^{-} q_{11} \right\} \right).
\]
Recalling the definitions of $p_{11}$ and $q_{11}$ shows that we have obtained the desired inequality. 
\end{proof}

\subsection{Proof of Lemma \ref{lemma:prob_bound_2}}
\label{subsec:lemma_prob_bound}

For any $t^{+}$ and $t^{-}$ (to be chosen later) we have that 
\begin{align}
\p \left( \wh{\tau} \in S_{k_1, k_2} \, \middle| \, \boldsymbol{\sigma} , \tau_* \right) & \le \p \left( \exists \tau \in S_{k_1, k_2} : X(\tau) \le 0 \, \middle| \, \boldsymbol{\sigma}, \tau_* \right) \nonumber \\
\label{eq:prob_bound_part1}
& \le \p \left( \exists \tau \in S_{k_1, k_2}: X(\tau) \le 0, Y^+(\tau) \ge t^+, Y^-(\tau) \ge t^- \, \middle| \, \boldsymbol{\sigma}, \tau_* \right) \\
\label{eq:prob_bound_part2}
&\quad + \p \left( \exists \tau \in S_{k_1, k_2} : Y^+(\tau) \leq t^+ \, \middle| \, \boldsymbol{\sigma}, \tau_* \right) \\
\label{eq:prob_bound_part3}
&\quad + \p \left( \exists \tau \in S_{k_1, k_2} : Y^-(\tau) \leq t^- \, \middle| \, \boldsymbol{\sigma}, \tau_* \right).
\end{align}
In the following we bound from above each of these three terms, 
starting with~\eqref{eq:prob_bound_part1}. For any $\tau \in S_{k_1, k_2}$, and any $\theta \in (0,1]$ and $\omega, \zeta \ge 1$, we have that 
\begin{multline*}
\p \left(X(\tau) \le 0, Y^+(\tau) \ge t^+, Y^-(\tau) \ge t^- \, \middle| \, \boldsymbol{\sigma}, \tau_* \right) \\
\begin{aligned}
&= \sum\limits_{k \le 0} \sum\limits_{k^+ \ge t^+} \sum\limits_{k^- \ge t^-} \p \left( \left( X(\tau), Y^+(\tau), Y^-(\tau) \right) = \left( k, k^+, k^- \right) \, \middle| \, \boldsymbol{\sigma}, \tau_* \right) \\
&\le \sum\limits_{k = -\infty}^\infty \sum\limits_{k^+ = -\infty}^\infty \sum\limits_{k^- = -\infty}^\infty \theta^{k} \omega^{k^{+} - t^{+}} \zeta^{k^{-} - t^{-}} \p \left( \left( X(\tau), Y^+(\tau), Y^-(\tau) \right) = \left( k, k^{+}, k^{-} \right) \, \middle| \, \boldsymbol{\sigma}, \tau_* \right) \\
&= \omega^{-t^+} \zeta^{-t^-} \Phi^\tau (\theta, \omega, \zeta).
\end{aligned}
\end{multline*}
By taking a union bound over $\tau \in S_{k_1, k_2}$ and setting $(\theta, \omega, \zeta) := \left(1/\sqrt{n}, e, e \right)$, we can thus bound the expression 
in~\eqref{eq:prob_bound_part1} from above by  
\[
\left|S_{k_1, k_2} \right| \max\limits_{\tau \in S_{k_1, k_2} } e^{- t^{+} - t^{-}} \Phi^\tau \left( 1/\sqrt{n}, e, e \right).
\]
Using the estimate $\left|S_{k_1, k_2} \right| \le n^{k_1 + k_2}$ (see the proof of Lemma~\ref{lemma:prob_bound_1}) and also Lemma~\ref{lemma:pgf_bound},  
we thus have that 
\begin{multline*}
\p \left( \exists \tau \in S_{k_1, k_2}: X(\tau) \le 0, Y^+(\tau) \ge t^+, Y^-(\tau) \ge t^- \, \middle| \, \boldsymbol{\sigma}, \tau_* \right) \\
\le 
\max\limits_{\tau \in S_{k_1, k_2} } 
\exp \left( (k_1 + k_2) \log n - \left( t^+ + t^- + (1 - \epsilon)s^2 \left( \alpha M^+(\tau) + \beta M^-(\tau) \right) \frac{\log n}{n} \right) \right).
\end{multline*} 
Noting that we may choose $t^{+}$ and $t^{-}$ as functions of $\tau$, 
set 
\begin{equation}\label{eq:tplustminus}
t^+ : = (1 - \epsilon) s^2 \alpha \frac{\log n}{n} M^+(\tau) \qquad \text{and} \qquad t^- : = (1 - \epsilon)s^2 \beta \frac{\log n}{n} M^-(\tau). 
\end{equation}
In this way the expression 
in~\eqref{eq:prob_bound_part1} is bounded from above by 
\begin{equation}
\label{eq:exponent_bound1} 
\max\limits_{\tau \in S_{k_1, k_2} } 
\exp \left( (k_1 + k_2) \log n - 2(1 - \epsilon)s^2 \left( \alpha M^+(\tau) + \beta M^-(\tau) \right) \frac{\log n}{n} \right).
\end{equation}
On the event $\cF_{\epsilon}$, provided that $n$ is large enough, 
we may use the bounds in Lemma~\ref{lem:Mtau_lowerbounds_klarge} 
for $M^+(\tau)$ and $M^-(\tau)$ to bound the exponent in~\eqref{eq:exponent_bound1} from above by 
\[
\left\{ 1 - (1-\epsilon)^{2} s^{2}\left( \alpha + \beta \right) / 2  \right\} \left( k_{1} + k_{2} \right) \log n 
  \leq -\epsilon(k_1 + k_2) \log n,
\]
where the second inequality follows from the assumption that 
$s^2 (\alpha + \beta)/2 > (1+\epsilon)(1-\epsilon)^{-2}$. 
We have thus obtained, for all $n$ large enough, that 
\[
\p \left( \exists \tau \in S_{k_1, k_2}: X(\tau) \le 0, Y^+(\tau) \ge t^+, Y^-(\tau) \ge t^- \, \middle| \, \boldsymbol{\sigma}, \tau_* \right) \mathbf{1} \left(\cF_{\epsilon} \right) 
\leq 
n^{- \epsilon(k_1 + k_2)}. 
\]

Next, we turn to bounding~\eqref{eq:prob_bound_part2}, and recall that we have set $t^{+}$ as in~\eqref{eq:tplustminus}. 
We shall first relate $Y^+(\tau)$ to a similar quantity which depends only on the correctly matched region of the corresponding vertex permutation $\pi$. 
Formally, given $\pi_{*}$ (equivalently, $\tau_{*}$), define the sets
\[
F(\pi) := \{ v \in V : \pi(v) = \pi_*(v) \} 
\qquad 
\text{ and }
\qquad 
\binom{F(\pi)}{2} := \{ \{u,v\} : u,v \in F(\pi), u \neq v \}.
\]
In words, $F(\pi)$ is the set of correctly matched vertices according to $\pi$, 
and $\binom{F(\pi)}{2}$ is the set of unordered pairs in $F(\pi)$. 
We can then write
\begin{align*}
Y^+(\tau) & = \sum\limits_{e \in \cE^+(\boldsymbol{\sigma}): \tau(e) \neq \tau_*(e)} A_e B_{\tau_{*}(e)}  
\stackrel{(a)}{=} \sum\limits_{e \in \cE^+(\boldsymbol{\sigma})\setminus \left( \binom{F(\pi)}{2} \cup E_{tr}^{+} \right)} A_e B_{\tau_{*}(e)} \\
& = \sum\limits_{e \in \cE^+(\boldsymbol{\sigma}) \setminus \binom{F(\pi)}{2}} A_e B_{\tau_{*}(e)} - \sum\limits_{e \in E_{tr}^+} A_e B_{\tau_{*}(e)} 
\stackrel{(b)}{\ge} \sum\limits_{e \in \cE^+(\boldsymbol{\sigma}) \setminus \binom{F(\pi)}{2}} A_e B_{\tau_{*}(e)} - \frac{n}{2}.
\end{align*}
Above, 
$(a)$ follows since $\tau(e) = \tau_*(e)$ 
if and only if 
either both endpoints of $e$ are fixed points of~$\pi$ 
or the endpoints of $e$ are a transposition in $\pi$; 
and $(b)$ follows since $A_e B_{\tau_{*}(e)} \in \{0,1\}$, 
so the second summation is at most $\left|E_{tr}^{+}\right| \le (k_1 + k_2)/2$ (by Lemma \ref{lemma:M}), which in turn is at most $n/2$. 
Hence $Y^{+}(\tau) \leq t^{+}$ implies that 
\begin{equation}\label{eq:sum_depending_only_on_F_and_tplus}
\sum\limits_{e \in \cE^+(\boldsymbol{\sigma}) \setminus \binom{F(\pi)}{2}} A_e B_{\tau_{*}(e)} \le t^{+} + \frac{n}{2}.
\end{equation}
To abbreviate notation, for $F \subseteq V$ let 
$H_{F} := \cE^+(\boldsymbol{\sigma}) \setminus \binom{F}{2}$ 
(where we suppress dependence on $\boldsymbol{\sigma}$ in the notation for simplicity). 
Noting that 
$M^{+}(\tau) \leq \left| H_{F(\pi)} \right|$ 
and recalling the definition of $t^{+}$,~\eqref{eq:sum_depending_only_on_F_and_tplus} further implies that 
\begin{equation}\label{eq:sum_depending_only_on_F}
\sum\limits_{e \in H_{F(\pi)}} A_e B_{\tau_{*}(e)} 
\le (1 - \epsilon) s^2 \alpha \frac{\log n}{n} \left| H_{F(\pi)} \right| + \frac{n}{2}.
\end{equation}
Importantly, note that (given $\boldsymbol{\sigma}$ and $\tau_{*}$) the sum in~\eqref{eq:sum_depending_only_on_F} depends on $\pi$ (equivalently, $\tau$) only through $F(\pi)$. 
The same holds for the right hand side of~\eqref{eq:sum_depending_only_on_F}. 
Therefore if there exists $\tau \in S_{k_{1}, k_{2}}$ such that $Y^{+}(\tau) \leq t^{+}$, 
then there exists $F \subseteq V$ such that 
$\left| V_{+} \setminus F \right| = k_{1}$, 
$\left| V_{-} \setminus F \right| = k_{2}$,  
and the inequality 
\begin{equation}\label{eq:sum_F_no_pi}
Z_{F} := 
\sum\limits_{e \in H_{F}} A_e B_{\tau_{*}(e)} 
\le (1 - \epsilon) s^2 \alpha \frac{\log n}{n} \left| H_{F} \right| + \frac{n}{2}.
\end{equation}
holds. 
Thus turning to~\eqref{eq:prob_bound_part2}, a union bound gives that 
\begin{align}
&\p \left( \exists \tau \in S_{k_1, k_2} : Y^+(\tau) \le t^+ \, \middle| \, \boldsymbol{\sigma} , \tau_* \right) \notag \\
&\qquad\le \p \left( \exists F \subseteq V \text{ with } |V_{+} \setminus F| = k_{1}, |V_{-} \setminus F| = k_2 : 
Z_{F} \le (1 - \epsilon) s^2 \alpha \frac{\log n}{n} \left| H_{F} \right| + \frac{n}{2} \, \middle| \, \boldsymbol{\sigma}, \tau_* \right) \notag \\
&\qquad\le \sum\limits_{F \subseteq V: |V_{+} \setminus F| = k_{1}, |V_{-} \setminus F| = k_{2}} \p \left( Z_{F} \le (1 - \epsilon) s^2 \alpha \frac{\log n}{n} \left| H_{F} \right| + \frac{n}{2} \, \middle| \, \boldsymbol{\sigma}, \tau_* \right) \notag \\
&\qquad\le 2^{n} \max\limits_{F \subseteq V: |V_{+} \setminus F| = k_{1}, |V_{-} \setminus F| = k_{2}} \p \left( Z_{F} \le (1 - \epsilon) s^2 \alpha \frac{\log n}{n} \left| H_{F} \right| + \frac{n}{2} \, \middle| \, \boldsymbol{\sigma}, \tau_* \right). \label{eq:F_union_bound}
\end{align}

Before continuing, we make a brief remark about the purpose of the above computations. 
If we were to deal with $Y^{+}(\tau)$ directly, and take a union bound over all $\tau \in S_{k_{1}, k_{2}}$, 
we would gain a factor of $\left| S_{k_{1}, k_{2}} \right| \leq n^{k_{1}+k_{2}} = \exp \left( \Theta \left( n \log n \right) \right)$ from the union bound, which would be too large for our purposes. 
This is why it is important to switch from $Y^{+}(\tau)$ to the sum in~\eqref{eq:sum_F_no_pi}: it allows us to take a union bound over a much smaller set, resulting in a factor of only $2^{n}$, as in~\eqref{eq:F_union_bound}.

Continuing the proof, our goal is to bound the probability in~\eqref{eq:F_union_bound}. 
Notice that (conditioned on $\boldsymbol{\sigma}$ and $\tau_{*}$) for every $e \in \cE^+(\boldsymbol{\sigma})$ we have that
\[
A_e B_{\tau_{*}(e)} \sim \mathrm{Bernoulli} \left( s^2 \alpha \frac{\log n}{n} \right),
\]
and these random variables are (conditioned on $\boldsymbol{\sigma}$ and $\tau_{*}$) mutually independent across $e \in \cE^+(\boldsymbol{\sigma})$. Hence 
(conditioned on $\boldsymbol{\sigma}$ and $\tau_{*}$) we have that 
$Z_{F} \sim \Bin \left( \left| H_{F} \right|, s^{2} \alpha \log(n) / n \right)$. 
In particular, note that 
$\E \left[ Z_{F} \, \middle| \, \boldsymbol{\sigma}, \tau_{*} \right] 
= \left| H_{F} \right| s^{2} \alpha \log(n) / n$.

Note that for any $F \subseteq V$ such that 
$|V_{+} \setminus F| = k_{1}$ and $|V_{-} \setminus F| = k_{2}$, 
and for any $\pi$ such that $\ell(\pi) \in S_{k_{1},k_{2}}$,  
we have that $\left| H_{F} \right| \geq M^{+}(\ell(\pi))$. 
Therefore Lemma~\ref{lem:Mtau_lowerbounds_klarge} implies that 
$\left| H_{F} \right| \geq ((1- \epsilon)/4)(k_{1} + k_{2}) n$ 
for all $n$ large enough. 
Recall that we assume that either 
$k_1 \ge \frac{\epsilon}{2} |V_{+}|$ 
or $k_2 \ge \frac{\epsilon}{2} |V_{-}|$. 
Therefore on the event $\cF_{\epsilon}$ 
we have that 
$k_{1} + k_{2} \geq (\epsilon/2) \left( 1 - \epsilon/2 \right) n / 2$. 
Thus on the event $\cF_{\epsilon}$ we have that 
$\left| H_{F} \right| = \Omega \left( n^{2} \right)$. 
Hence on the event $\cF_{\epsilon}$ we have, 
for all $n$ large enough, that 
\begin{equation*}
\label{eq:t+_bound}
(1 - \epsilon) s^2 \alpha \frac{\log n}{n} \left| H_{F} \right|  + \frac{n}{2} 
\le (1 - \epsilon/2) \left| H_{F} \right| s^2 \alpha \frac{\log n}{n} 
= (1-\epsilon/2) \E \left[ Z_{F} \, \middle| \, \boldsymbol{\sigma}, \tau_{*} \right].
\end{equation*}
Thus for all $n$ large enough we have that
\[
\p \left( Z_{F} \le (1 - \epsilon) s^2 \alpha \frac{\log n}{n} \left| H_{F} \right| + \frac{n}{2} \, \middle| \, \boldsymbol{\sigma}, \tau_* \right) \mathbf{1}(\cF_\epsilon)
\leq 
\p \left( Z_{F} \le \left(1 - \frac{\epsilon}{2} \right) \E \left[ Z_{F} \, \middle| \, \boldsymbol{\sigma}, \tau_{*} \right] \, \middle| \, \boldsymbol{\sigma}, \tau_* \right) \mathbf{1}(\cF_\epsilon).
\]
By Bernstein's inequality we have that 
\begin{align*}
\p \left( Z_{F} \le \left( 1 - \frac{\epsilon}{2} \right) \E \left[ Z_{F} \, \middle| \, \boldsymbol{\sigma}, \tau_{*} \right] \, \middle| \, \boldsymbol{\sigma}, \tau_* \right) 
&\leq 
\mathrm{exp} \left( - \frac{ \frac{\epsilon^2}{8} \E \left[ Z_{F} \, \middle| \,\boldsymbol{\sigma}, \tau_* \right]^2}{\mathrm{Var} \left(Z_{F} \, \middle| \, \boldsymbol{\sigma}, \tau* \right) + \frac{1}{3} \E \left[ Z_{F} \, \middle| \, \boldsymbol{\sigma}, \tau_* \right]} \right) \\
&\leq 
\mathrm{exp} \left( - \frac{3 \epsilon^2}{32} \E \left[ Z_{F} \, \middle| \, \boldsymbol{\sigma}, \tau_* \right] \right),
\end{align*}
where the second inequality uses the fact that 
$\mathrm{Var}\left( Z_{F} \, \middle| \,\boldsymbol{\sigma}, \tau_* \right) \le \E \left[ Z_{F} \, \middle| \, \boldsymbol{\sigma}, \tau_* \right]$. 
Recall that 
\[
\E \left[ Z_{F} \, \middle| \, \boldsymbol{\sigma}, \tau_{*} \right] 
= \left| H_{F} \right| s^{2} \alpha \frac{\log n}{n} 
\geq \frac{1-\epsilon}{4}(k_{1}+k_{2}) s^{2} \alpha \log n 
\]
for all $n$ large enough. 
Putting everything together, we have thus shown, 
for any $F \subseteq V$ such that 
$|V_{+} \setminus F| = k_{1}$ and $|V_{-} \setminus F| = k_{2}$, 
that
\[
\p \left( Z_{F} \le (1 - \epsilon) s^2 \alpha \frac{\log n}{n} \left| H_{F} \right| + \frac{n}{2} \, \middle| \, \boldsymbol{\sigma}, \tau_* \right) \mathbf{1}(\cF_\epsilon)
\leq 
\exp \left( - \frac{ 3 \epsilon^2 (1 - \epsilon) }{128} (k_1 + k_2) s^2 \alpha  \log n \right)
\]
for all $n$ large enough. 
Plugging this back into~\eqref{eq:F_union_bound}, we obtain that 
\begin{align*}
\p \left( \exists \tau \in S_{k_1, k_2} : Y^+(\tau) \le t^+ \, \middle| \, \boldsymbol{\sigma} , \tau_* \right) \mathbf{1}(\cF_\epsilon)
&\leq 
\exp \left( n \log 2 - \frac{ 3 \epsilon^2 (1 - \epsilon) }{128} (k_1 + k_2) s^2 \alpha  \log n \right) \\
&\leq 
\exp \left( - \frac{ \epsilon^2 (1 - \epsilon) }{50} (k_1 + k_2) s^2 \alpha  \log n \right)
\end{align*}
for all $n$ large enough, 
where the second inequality follows because $(k_{1} + k_{2}) \log n = \Omega \left( n \log n \right)$, which is asymptotically much larger than $n \log 2$. 
This concludes the bound for~\eqref{eq:prob_bound_part2}.

Turning to~\eqref{eq:prob_bound_part3}, 
repeating identical steps as above also shows, for all $n$ large enough, that 
\[
\p \left( \exists \tau \in S_{k_1, k_2} : Y^{-}(\tau) \le t^{-} \, \middle| \, \boldsymbol{\sigma}, \tau_* \right) \mathbf{1}(\cF_\epsilon)
\le 
\exp \left( - \frac{\epsilon^2 (1 - \epsilon) }{50} (k_1 + k_2) s^2 \beta  \log n \right).
\]

Putting everything together, if we let 
$\delta_0 : = 
\min \left\{ \epsilon, 
\epsilon^{2}(1 - \epsilon) s^{2} \alpha /50, 
\epsilon^{2} (1 - \epsilon) s^{2} \beta /50 \right\}$, 
then the terms \eqref{eq:prob_bound_part1}, \eqref{eq:prob_bound_part2}, and \eqref{eq:prob_bound_part3} are all at most $n^{ - \delta_0 (k_1 + k_2)}$, for all $n$ large enough. 
This gives a total bound of 
$3n^{ - \delta_{0}(k_1 + k_2)} \leq n^{ - \delta_{0}(k_1 + k_2)/2}$ 
for all $n$ large enough, 
concluding the proof of Lemma~\ref{lemma:prob_bound_2}.

\section{Exact graph matching for correlated SBMs: impossibility}
\label{sec:impossibility_graph_matching}

In this section we prove Theorem~\ref{thm:graph_matching_impossibility}, 
showing that it is impossible to exactly match the two correlated SBMs $G_{1}$ and $G_{2}$ whenever $s^{2}(\alpha + \beta) / 2 < 1$. 
While this was previously proven in~\cite{cullina2016simultaneous}, we provide a proof for completeness. 
At a high level, the strategy behind the proof 
is as follows. 

When $s^2 \left( \alpha + \beta \right) / 2 < 1$, we show that there are many vertices in~$G$ such that the corresponding vertices in $G_{1}$ and $G_{2}'$  have non-overlapping neighborhoods. Due to this lack of shared information, such vertices are challenging to correctly match in the two graphs, even for the maximum a posteriori (MAP) estimator that is given $G_{1}$ and $G_{2}$. For this reason, the MAP estimator is likely to output an incorrect vertex correspondence. Since the MAP estimator minimizes the probability of error, we conclude that no other estimator can do better (in particular, no estimator can output the correct correspondence with probability bounded away from zero). 

The input to the estimation problem is the pair of labeled graphs $G_{1}$ and $G_{2}$; equivalently, in the following we use the respective adjacency matrices $A$ and $B$. 
To compute the MAP estimator, we need to derive the posterior distribution of $\pi_{*}$ given $A$ and $B$. 
This is unfortunately quite challenging in correlated SBMs, since the probability of edge formation depends on the (unknown) latent community memberships of vertices. 
To carry out a tractable analysis, we shall provide extra information 
to the estimator: 
we assume that $\boldsymbol{\sigma}$ is also known; 
that is, we assume knowledge of the community memberships of all vertices in $G_{1}$. 
Providing this extra information can only make the problem of estimating $\pi_{*}$ easier, 
yet it turns out that recovering $\pi_{*}$ is still impossible even with this extra information.

\subsection{Properties of the posterior distribution}

Before deriving the posterior distribution of $\pi_{*}$ given $A$, $B$, and $\boldsymbol{\sigma}$, we define some relevant notation. Given $\boldsymbol{\sigma}$, for a lifted permutation $\tau$ and $i,j \in \{0,1\}$, define
\begin{align*}
\mu^+(\tau)_{ij} & := \sum\limits_{e \in \cE^+(\boldsymbol{\sigma})} \mathbf{1} \left( \left( A_{e}, B_{\tau(e)} \right) = \left(i,j \right) \right), \\
\mu^-(\tau)_{ij} & := \sum\limits_{e \in \cE^-(\boldsymbol{\sigma})} \mathbf{1} \left( \left( A_{e}, B_{\tau(e)} \right) = \left(i,j \right) \right).
\end{align*}
Additionally define 
\[
\nu^+(\tau) := \sum\limits_{e \in \cE^+(\boldsymbol{\sigma})} B_{\tau(e)}, 
\qquad 
\qquad
\qquad
\nu^-(\tau) := \sum\limits_{e \in \cE^-(\boldsymbol{\sigma})} B_{\tau(e)}.
\]
With these notations in place, 
and recalling the definitions of $\left\{ p_{ij} \right\}_{i,j \in \{0,1\}}$ and $\left\{ q_{ij} \right\}_{i,j \in \{0,1\}}$, 
the following lemma determines the posterior distribution of $\pi_{*}$ given~$A$, $B$, and $\boldsymbol{\sigma}$. 
\begin{lemma}[Posterior distribution]
\label{lemma:likelihood}
Let $\pi \in \cS_n$ and let $\tau = \ell(\pi)$ be the corresponding lifted permutation. There is a constant $c = c(A,B,\boldsymbol{\sigma})$ such that
\begin{equation}
\label{eq:likelihood}
\p \left( \pi_* = \pi \, \middle| \, A, B, \boldsymbol{\sigma} \right) 
= c \left( \frac{p_{00} p_{11}}{ p_{01} p_{10}} \right)^{\mu^+(\tau)_{11}} \left( \frac{q_{00} q_{11}}{q_{01} q_{10}} \right)^{\mu^-(\tau)_{11}} \left( \frac{p_{01}}{p_{00}} \right)^{\nu^+(\tau)} \left( \frac{q_{01}}{q_{00}} \right)^{\nu^-(\tau)}.
\end{equation}
\end{lemma}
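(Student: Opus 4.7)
The plan is to compute the posterior by Bayes' rule, then rewrite the likelihood so that the only $\tau$-dependent quantities are the ones that appear in \eqref{eq:likelihood}, absorbing everything else into the normalizing constant $c$.

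First, I would note that since $\pi_{*}$ is uniformly distributed on $\cS_{n}$ (independently of $\boldsymbol{\sigma}$ and the parent graph~$G$), Bayes' rule gives
\[
\p(\pi_{*} = \pi \mid A, B, \boldsymbol{\sigma}) \;\propto\; \p(A, B \mid \pi_{*} = \pi, \boldsymbol{\sigma}),
\]
where the constant of proportionality depends only on $A$, $B$, and $\boldsymbol{\sigma}$. Given $\pi_{*} = \pi$, we have $B'_{e} = B_{\tau(e)}$ for every $e \in \cE$, where $\tau = \ell(\pi)$. Using the conditional independence of the pairs $\{(A_{e}, B'_{e})\}_{e \in \cE}$ given $\boldsymbol{\sigma}$, together with the probabilities $\{p_{ij}\}$ and $\{q_{ij}\}$ recalled in Section~\ref{sec:notation}, I would factor the likelihood as
\[
\p(A, B \mid \pi_{*} = \pi, \boldsymbol{\sigma}) \;=\; \prod_{e \in \cE^{+}(\boldsymbol{\sigma})} p_{A_{e}, B_{\tau(e)}} \;\cdot\; \prod_{e \in \cE^{-}(\boldsymbol{\sigma})} q_{A_{e}, B_{\tau(e)}}.
\]

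Next, grouping terms by the value of the pair $(A_{e}, B_{\tau(e)})$, this expression becomes
\[
\left(\prod_{i,j \in \{0,1\}} p_{ij}^{\mu^{+}(\tau)_{ij}}\right) \left(\prod_{i,j \in \{0,1\}} q_{ij}^{\mu^{-}(\tau)_{ij}}\right).
\]
The key step is then to express all four counts $\mu^{+}(\tau)_{00}, \mu^{+}(\tau)_{01}, \mu^{+}(\tau)_{10}, \mu^{+}(\tau)_{11}$ in terms of $\mu^{+}(\tau)_{11}$ plus quantities that do not depend on $\tau$. Specifically, the three linear identities
\[
\sum_{i,j} \mu^{+}(\tau)_{ij} = |\cE^{+}(\boldsymbol{\sigma})|, \qquad \mu^{+}(\tau)_{10} + \mu^{+}(\tau)_{11} = \sum_{e \in \cE^{+}(\boldsymbol{\sigma})} A_{e}, \qquad \mu^{+}(\tau)_{01} + \mu^{+}(\tau)_{11} = \nu^{+}(\tau)
\]
let me solve for $\mu^{+}(\tau)_{00}, \mu^{+}(\tau)_{01}, \mu^{+}(\tau)_{10}$ in terms of $\mu^{+}(\tau)_{11}$, $\nu^{+}(\tau)$, and the $\tau$-independent quantities $|\cE^{+}(\boldsymbol{\sigma})|$ and $\sum_{e \in \cE^{+}(\boldsymbol{\sigma})} A_{e}$. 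Substituting into the product over $\cE^{+}(\boldsymbol{\sigma})$ and collecting like terms produces a factor of $(p_{00} p_{11} / (p_{01} p_{10}))^{\mu^{+}(\tau)_{11}} \cdot (p_{01}/p_{00})^{\nu^{+}(\tau)}$, multiplied by a prefactor that depends only on $A$ and $\boldsymbol{\sigma}$. The analogous computation on $\cE^{-}(\boldsymbol{\sigma})$ yields the corresponding factors involving $q_{ij}$, $\mu^{-}(\tau)_{11}$, and $\nu^{-}(\tau)$. Absorbing all $\tau$-independent prefactors into $c = c(A, B, \boldsymbol{\sigma})$ gives exactly \eqref{eq:likelihood}.

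There is no real obstacle here; the argument is purely algebraic once one recognizes the right reparametrization. The only thing to be careful about is verifying that the three linear relations above are indeed $\tau$-invariant on the correct side: the sum $\sum_{e \in \cE^{+}(\boldsymbol{\sigma})} A_{e}$ is a deterministic function of $A$ and $\boldsymbol{\sigma}$ (and in particular does not involve $\tau$), while $\nu^{+}(\tau)$ genuinely depends on $\tau$ and therefore must appear in the final formula. Confirming these invariances is the one substantive check; the rest is routine manipulation.
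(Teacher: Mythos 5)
Your proposal is correct and follows essentially the same argument as the paper's proof: apply Bayes' rule with the uniform prior on $\pi_{*}$, factorize the likelihood over edges using the $p_{ij}$'s and $q_{ij}$'s, then use the three linear identities to eliminate $\mu^{\pm}(\tau)_{00}, \mu^{\pm}(\tau)_{01}, \mu^{\pm}(\tau)_{10}$ in favor of $\mu^{\pm}(\tau)_{11}$, $\nu^{\pm}(\tau)$, and $\tau$-independent quantities. The reparametrization you identify is exactly what the paper does.
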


\begin{proof}
By Bayes' rule, we have that 
\[
\p \left(\pi_* = \pi \, \middle| \, A, B, \boldsymbol{\sigma} \right) 
= \frac{ \p \left(A,B \, \middle| \, \pi_* = \pi, \boldsymbol{\sigma} \right) \p \left( \pi_* = \pi \, \middle| \, \boldsymbol{\sigma} \right) }{ \p \left(A,B \, \middle| \, \boldsymbol{\sigma} \right) }.
\]
Since the permutation $\pi_{*}$ is chosen uniformly at random and independently of the community labels, we have that 
$\p \left( \pi_{*} = \pi \, \middle| \, \boldsymbol{\sigma} \right) 
= \p(\pi_{*} = \pi) = 1/n!$. 
Moreover, the term in the denominator only depends on $A$, $B$, and $\boldsymbol{\sigma}$ (it does not depend on $\pi$). We can therefore write
\[
\p \left( \pi_{*} = \pi \, \middle| \, A, B, \boldsymbol{\sigma} \right) 
= c(A,B,\boldsymbol{\sigma}) \p \left(A,B \, \middle| \, \pi_* = \pi, \boldsymbol{\sigma} \right), 
\]
where 
$c(A,B,\boldsymbol{\sigma}) = 1 / \left( n! \, \p \left(A,B \, \middle| \, \boldsymbol{\sigma} \right) \right)$. 
We now focus on computing $\p \left( A,B \, \middle| \, \pi_* = \pi, \boldsymbol{\sigma} \right)$. 
Given~$\boldsymbol{\sigma}$, the edge formation processes in the parent graph $G$ are mutually independent across pairs of vertices. 
Since the subsampling procedure is also independent across pairs of vertices, we have that 
\begin{equation}
\label{eq:marginal1}
\p(A,B \mid \pi_* = \pi, \boldsymbol{\sigma}) = 
\left( p_{00}^{\mu^+(\tau)_{00}} p_{01}^{\mu^+(\tau)_{01}} p_{10}^{\mu^+(\tau)_{10}} p_{11}^{\mu^+(\tau)_{11}} \right) 
\left( q_{00}^{\mu^-(\tau)_{00}} q_{01}^{\mu^-(\tau)_{01}} q_{10}^{\mu^-(\tau)_{10}} q_{11}^{\mu^-(\tau)_{11}} \right).
\end{equation}
To simplify this expression, note that we can write 
\begin{align*}
\mu^+(\tau)_{00} & = \sum\limits_{e \in \cE^+(\boldsymbol{\sigma})} \left( 1 - A_{e} \right) \left(1 - B_{\tau(e)} \right) = \sum\limits_{e \in \cE^+(\boldsymbol{\sigma})} \left(1 - A_{e} \right) - \nu^+(\tau) + \mu^+(\tau)_{11}, \\
\mu^+(\tau)_{01} & = \sum\limits_{e \in \cE^+(\boldsymbol{\sigma})} \left( 1 - A_{e} \right) B_{\tau(e)} = \nu^+(\tau) - \mu^+(\tau)_{11}, \\
\mu^+(\tau)_{10} & = \sum\limits_{e \in \cE^+(\boldsymbol{\sigma})} A_e \left(1 - B_{\tau(e)} \right) = \sum\limits_{e \in \cE^+(\boldsymbol{\sigma})} A_e  - \mu^+(\tau)_{11},
\end{align*}
with similar expressions for $\mu^{-}(\tau)_{ij}$. The only terms on the right hand sides above that depend on $\tau$ (and therefore $\pi$) are $\nu^+(\tau)$ and $\mu^+(\tau)_{11}$; the remaining terms only depend on $A$ and $\boldsymbol{\sigma}$. 
We therefore have that 
\[
p_{00}^{\mu^+(\tau)_{00}} p_{01}^{\mu^+(\tau)_{01}} p_{10}^{\mu^+(\tau)_{10}} p_{11}^{\mu^+(\tau)_{11}} 
= C(A, \boldsymbol{\sigma}) \left( \frac{p_{00} p_{11}}{ p_{01} p_{10}} \right)^{\mu^+(\tau)_{11}} \left( \frac{p_{01}}{p_{00}} \right)^{\nu^+(\tau)}
\]
where $C(A, \boldsymbol{\sigma})$ depends only on $A$ and $\boldsymbol{\sigma}$. 
A similar expression holds for the other factor in~\eqref{eq:marginal1}, 
with $p_{ij}$ replaced with $q_{ij}$, $\mu^{+}(\tau)_{ij}$ replaced with $\mu^{-}(\tau)_{ij}$, and $\nu^{+}(\tau)$ replaced with $\nu^{-}(\tau)$. 
Plugging these back into~\eqref{eq:marginal1} we obtain~\eqref{eq:likelihood}. 
\end{proof}

Recall that 
$p_{00}, q_{00} = 1 - o(1)$ as $n \to \infty$, 
and that 
$p_{01}, p_{10}, p_{11}, q_{01}, q_{10}, q_{11}$ 
are all on the order $\log(n) / n$, 
implying that 
$p_{00} p_{11} > p_{01} p_{10}$ and 
$q_{00} q_{11} > q_{01} q_{10}$ 
for all $n$ large enough. 
Thus a useful consequence of Lemma~\ref{lemma:likelihood} is that $\p \left(\pi_* = \pi \, \middle| \,  A,B,\boldsymbol{\sigma} \right)$ is increasing in $\mu^+(\tau)_{11}$ and $\mu^-(\tau)_{11}$, and decreasing in $\nu^+(\tau)$ and $\nu^-(\tau)$. Building on these observations, the following results establish conditions under which two lifted permutations, $\tau$ and $\tau'$, satisfy $\mu^+(\tau)_{11} \ge \mu^+(\tau')_{11}$ or $\nu^+(\tau) = \nu^+(\tau')$, with similar statements about $\mu^-$ and $\nu^-$. 
These will be used later to analyze the performance of the MAP estimator.

\begin{proposition}
\label{prop:nu} 
Given $\boldsymbol{\sigma}$, the following holds. 
Let $\pi_a, \pi_b \in \cS_n$. If 
\begin{equation}
\label{eq:nu_condition}
\pi_{a}(V_{+}) = \pi_{b}(V_{+}) \qquad \text{and} \qquad \pi_{a}(V_{-}) = \pi_{b}(V_{-}),
\end{equation}
then $\nu^+(\ell(\pi_a)) = \nu^+(\ell(\pi_b))$ and $\nu^-(\ell(\pi_a)) = \nu^-(\ell(\pi_b))$.
\end{proposition}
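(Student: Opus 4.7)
The plan is to observe that the quantities $\nu^+(\tau)$ and $\nu^-(\tau)$ are sums of the matrix $B$ over the \emph{image} of $\cE^+(\boldsymbol{\sigma})$ and $\cE^-(\boldsymbol{\sigma})$ under the lifted permutation $\tau$, so it suffices to show that these images depend on $\pi$ only through the unordered partition $\{\pi(V_+), \pi(V_-)\}$.

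More concretely, the first step will be to set $W_+ := \pi_a(V_+) = \pi_b(V_+)$ and $W_- := \pi_a(V_-) = \pi_b(V_-)$, which are well-defined by hypothesis~\eqref{eq:nu_condition}. Since each $\pi \in \cS_n$ is a bijection, its lift $\tau = \ell(\pi)$ is a bijection on $\cE$. The second step is to identify the images: an unordered pair $\{i,j\} \in \cE^+(\boldsymbol{\sigma})$ is, by definition, a pair with both endpoints in $V_+$ or both in $V_-$, so under $\tau_a = \ell(\pi_a)$ it is sent to a pair with both endpoints in $W_+$ or both in $W_-$, and conversely every such pair arises from exactly one $\{i,j\} \in \cE^+(\boldsymbol{\sigma})$. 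Hence
\[
\tau_a\!\left(\cE^+(\boldsymbol{\sigma})\right) \;=\; \binom{W_+}{2} \cup \binom{W_-}{2} \;=\; \tau_b\!\left(\cE^+(\boldsymbol{\sigma})\right),
\]
and by the same reasoning $\tau_a(\cE^-(\boldsymbol{\sigma})) = \tau_b(\cE^-(\boldsymbol{\sigma})) = \{\{u,v\} : u \in W_+, v \in W_-\}$.

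The third step is to use these image identities to conclude. Because $\tau_a$ and $\tau_b$ each restrict to bijections from $\cE^+(\boldsymbol{\sigma})$ onto the common set $\binom{W_+}{2} \cup \binom{W_-}{2}$, reindexing the sum gives
\[
\nu^+(\ell(\pi_a)) = \sum_{e \in \cE^+(\boldsymbol{\sigma})} B_{\tau_a(e)} = \sum_{f \in \binom{W_+}{2} \cup \binom{W_-}{2}} B_f = \sum_{e \in \cE^+(\boldsymbol{\sigma})} B_{\tau_b(e)} = \nu^+(\ell(\pi_b)),
\]
and an identical reindexing argument, using the image identity for $\cE^-(\boldsymbol{\sigma})$, yields $\nu^-(\ell(\pi_a)) = \nu^-(\ell(\pi_b))$.

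There is no real obstacle here; the proposition is essentially a bookkeeping statement that $\nu^{\pm}$ only sees which \emph{set} of vertex pairs is hit, not the particular bijection used to hit it. The only thing to be careful about is the unordered-pair convention (so that $\{\pi(i), \pi(j)\}$ is unambiguously the image of $\{i,j\}$), which is already built into the definition of the lifted permutation.
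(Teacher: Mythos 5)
Your proof is correct and is essentially the same reindexing argument the paper uses: the paper changes variables in the sum via the explicit bijection $\pi_a^{-1}\circ\pi_b$ on $V_+$ (and on $V_-$), while you identify the common image $\binom{W_+}{2}\cup\binom{W_-}{2}$ of $\cE^+(\boldsymbol{\sigma})$ under both lifted permutations and reindex both sums to that set --- two phrasings of the same observation.
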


\begin{proof}
We prove the claim for $\nu^+$; the other claim follows from identical arguments. First note that $\cE^+(\boldsymbol{\sigma}) = \binom{V_{+}}{2} \cup \binom{V_{-}}{2}$, so we can write 
\begin{equation}
\label{eq:nu1}
\nu^+(\ell(\pi_a)) = \sum\limits_{(i,j) \in \binom{V_{+}}{2}} B_{\pi_{a}(i), \pi_{a}(j)} + \sum\limits_{(i,j) \in \binom{V_{-}}{2}} B_{\pi_{a}(i), \pi_{a}(j)}.
\end{equation}
In light of the assumption~\eqref{eq:nu_condition}, the mapping $\pi_{a}^{-1} \circ \pi_{b}: V_{+} \to V_{+}$ is a bijection. The first summation on the right hand side of~\eqref{eq:nu1} is therefore equal to 
\[
\sum\limits_{(i,j) \in \binom{V_{+}}{2}} B_{\left( \pi_{a} \circ \pi_{a}^{-1} \circ \pi_{b} \right)(i), \left(\pi_{a} \circ \pi_{a}^{-1} \circ \pi_{b} \right)(j)} = \sum\limits_{(i,j) \in \binom{V_{+}}{2}} B_{\pi_{b}(i), \pi_{b}(j)}.
\]
Similarly, since $\pi_{a}^{-1} \circ \pi_{b} :V_{-} \to V_{-}$ is a bijection in light of \eqref{eq:nu_condition}, the second summation on the right hand side of \eqref{eq:nu1} is equal to 
\[
\sum\limits_{(i,j) \in \binom{V_{-}}{2}} B_{\pi_{b}(i), \pi_{b}(j)}.
\]
Plugging the previous two displays back into~\eqref{eq:nu1} we obtain that 
$\nu^+(\ell(\pi_a)) = \nu^+(\ell(\pi_b))$.  
\end{proof}

\begin{proposition}
\label{prop:mu}
Let $\tau_a$ and $\tau_b$ be lifted permutations 
such that  whenever $A_{e} B_{\tau_{b}(e)} = 1$ 
we also have that 
$A_{e}B_{\tau_{a}(e)} = 1$. 
Then $\mu^+(\tau_a)_{11} \ge \mu^+(\tau_b)_{11}$ and $\mu^-(\tau_a)_{11} \ge \mu^-(\tau_b)_{11}$.
\end{proposition}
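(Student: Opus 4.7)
The plan is to observe that the quantities $\mu^+(\tau)_{11}$ and $\mu^-(\tau)_{11}$ admit a simpler description than their definitions as sums of indicators suggest. Specifically, since $A_e$ and $B_{\tau(e)}$ take values in $\{0,1\}$, the indicator $\mathbf{1}\!\left((A_e, B_{\tau(e)}) = (1,1)\right)$ is equal to the product $A_e B_{\tau(e)}$. Hence
\[
\mu^{+}(\tau)_{11} = \sum_{e \in \cE^{+}(\boldsymbol{\sigma})} A_e B_{\tau(e)},
\qquad
\mu^{-}(\tau)_{11} = \sum_{e \in \cE^{-}(\boldsymbol{\sigma})} A_e B_{\tau(e)}.
\]

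Next, I would note that the hypothesis of the proposition is simply a pointwise domination statement. Since $A_e B_{\tau_a(e)}, A_e B_{\tau_b(e)} \in \{0,1\}$, the implication ``$A_e B_{\tau_b(e)} = 1 \Rightarrow A_e B_{\tau_a(e)} = 1$'' is equivalent to the inequality $A_e B_{\tau_a(e)} \geq A_e B_{\tau_b(e)}$ for every $e \in \cE$.

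Finally, summing these pointwise inequalities over $e \in \cE^{+}(\boldsymbol{\sigma})$ and over $e \in \cE^{-}(\boldsymbol{\sigma})$ separately, and using the identities from the first step, yields $\mu^{+}(\tau_a)_{11} \geq \mu^{+}(\tau_b)_{11}$ and $\mu^{-}(\tau_a)_{11} \geq \mu^{-}(\tau_b)_{11}$ respectively. There is no real obstacle here; the proposition is essentially a rephrasing of its hypothesis once one unpacks the definitions, and it will be used later in combination with Lemma~\ref{lemma:likelihood} (where the posterior is monotone in $\mu^{+}(\tau)_{11}$ and $\mu^{-}(\tau)_{11}$) to compare the posterior probabilities of competing permutations.
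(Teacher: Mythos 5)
Your proof is correct and is essentially identical to the paper's: both rephrase the hypothesis as the pointwise inequality $A_e B_{\tau_a(e)} \ge A_e B_{\tau_b(e)}$ and then sum over $\cE^{+}(\boldsymbol{\sigma})$ and $\cE^{-}(\boldsymbol{\sigma})$, using that the $(1,1)$-indicator equals the product $A_e B_{\tau(e)}$. You simply spell out a couple of one-line observations that the paper leaves implicit.
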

 
\begin{proof}
The condition on $\tau_a$ and $\tau_b$ in the statement  implies that $A_{e} B_{\tau_{a}(e)} \ge A_{e} B_{\tau_{b}(e)}$ for all $e \in \cE$, and the desired result follows from the formulas for $\mu^+$ and $\mu^-$. 
\end{proof}

\subsection{Performance of the MAP estimator and proof of Theorem~\ref{thm:graph_matching_impossibility}}

The following lemma shows how one may use the simple propositions above to bound the probability that the MAP estimator outputs a given permutation. Before stating the lemma, we recall a few properties of the MAP estimator. The estimator is formally given by
\begin{equation}\label{eq:MAP_estimator}
\wh{\pi}_{\MAP} \in \argmax_{\pi \in \cS_n} \p \left( \pi_* = \pi \, \middle| \, A, B, \boldsymbol{\sigma} \right).
\end{equation}
In words, $\wh{\pi}_{\MAP}$ is the mode of the posterior distribution $\{\p \left(\pi_* = \pi \, \middle| \, A, B, \boldsymbol{\sigma} \right)\}_{\pi \in \cS_n}$. When the argmax set is not a singleton, $\wh{\pi}_{\MAP}$ is a uniform random element of the argmax set. The MAP estimator is \emph{optimal}, in the sense that it minimizes the probability of error (see, e.g., \cite[Chapter~4]{poor_book}). 

For $\pi \in \cS_n$, define the set 
\[
T^\pi \equiv T^{\pi}(A,B) : = \left \{ i \in [n]: \forall j \in [n], A_{i, j} B_{\pi(i),\pi(j)} = 0 \right \},
\]
as well as $T^\pi_{+} : = T^\pi \cap V_{+}$ and $T^\pi_{-} : = T^\pi \cap V_{-}$. 
Note that 
$T_{+}^{\pi}$ and $T_{-}^{\pi}$ 
are functions of 
$A$, $B$, and~$\boldsymbol{\sigma}$. 
In words, if $\pi$ is the true vertex correspondence and $i \in T^{\pi}$, 
then the neighbors of $i$ in $G_{1}$ and the neighbors of $i$ in $G_{2}'$ are disjoint sets. 
Due to the lack of overlapping information, it becomes difficult for the MAP estimator to correctly match $i$ in $G_{1}$ with its counterpart $\pi(i)$ in $G_{2}$. 
The following lemma formalizes this (where we use the standard convention that $0! = 1$). 

\begin{lemma}[MAP estimator]
\label{lemma:map} 
For all $n$ large enough 
and for any $\pi \in \cS_{n}$ we have that 
\[
\p \left( \wh{\pi}_{\MAP} = \pi \, \middle| \, A,B, \boldsymbol{\sigma} \right) \le \frac{1}{\left|T_{+}^{\pi} \right|! \cdot \left|T_{-}^{\pi} \right|!}.
\]
\end{lemma}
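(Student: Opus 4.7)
The plan is to exhibit, for each permutation $\pi \in \cS_n$, a large family of alternative permutations that all share the same (or higher) posterior probability as $\pi$ given $A$, $B$, and $\boldsymbol{\sigma}$. Since the MAP estimator is uniform on the argmax set, a large tie automatically bounds the probability that any particular $\pi$ is output.

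Concretely, I would first fix $\pi$, set $\tau = \ell(\pi)$, and consider any pair $(\rho_+, \rho_-)$ where $\rho_+$ permutes $T_+^\pi$ and $\rho_-$ permutes $T_-^\pi$. I assemble these into a single permutation $\rho$ of $[n]$ which acts as $\rho_+$ on $T_+^\pi$, as $\rho_-$ on $T_-^\pi$, and as the identity on $[n] \setminus T^\pi$, and I define $\pi' := \pi \circ \rho$. The key claim is
\[
\p(\pi_* = \pi' \mid A, B, \boldsymbol{\sigma}) \;\geq\; \p(\pi_* = \pi \mid A, B, \boldsymbol{\sigma}).
\]
To prove this I invoke the posterior formula in Lemma~\ref{lemma:likelihood}: since $p_{00} p_{11} \geq p_{01} p_{10}$ and $q_{00} q_{11} \geq q_{01} q_{10}$ for all $n$ large enough (both ratios are $1 + o(1)$ above $1$), it suffices to verify that $\nu^{\pm}(\tau') = \nu^{\pm}(\tau)$ and $\mu^{\pm}(\tau')_{11} \geq \mu^{\pm}(\tau)_{11}$, where $\tau' = \ell(\pi')$. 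The $\nu$ equalities follow from Proposition~\ref{prop:nu}, because $\rho$ sends $V_+ \to V_+$ and $V_- \to V_-$, so $\pi'(V_\pm) = \pi(V_\pm)$. For the $\mu$ inequality I use the defining property of $T^\pi$: if $e = (i,j)$ satisfies $A_e B_{\tau(e)} = 1$, then neither endpoint can lie in $T^\pi$, hence $\pi'$ and $\pi$ coincide on $\{i,j\}$ and $A_e B_{\tau'(e)} = 1$. Then Proposition~\ref{prop:mu} gives the monotonicity of $\mu^{\pm}$.

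Having established that every such $\pi'$ has posterior at least that of $\pi$, I conclude the argument by case analysis. If $\pi$ does not maximize the posterior given $(A, B, \boldsymbol{\sigma})$, then $\p(\wh{\pi}_{\MAP} = \pi \mid A, B, \boldsymbol{\sigma}) = 0$ and the bound is trivial. If $\pi$ does lie in the argmax, then since the alternative permutations $\pi'$ cannot exceed the maximum, they must all attain it, hence they all belong to the argmax. The distinct pairs $(\rho_+, \rho_-)$ yield distinct $\pi'$ (as $\pi$ is a bijection), producing $|T_+^\pi|! \cdot |T_-^\pi|!$ elements of the argmax set. Because $\wh{\pi}_{\MAP}$ is uniform on this set, $\p(\wh{\pi}_{\MAP} = \pi \mid A, B, \boldsymbol{\sigma}) \leq 1/(|T_+^\pi|! \cdot |T_-^\pi|!)$, giving the lemma.

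The only mildly subtle point, and thus the one to execute carefully, is checking that for $e$ with $A_e B_{\tau(e)} = 1$ we have $\pi'(i) = \pi(i)$ and $\pi'(j) = \pi(j)$. This relies on the correct interpretation of $T^\pi$ as the set of vertices whose $G_1$-neighborhoods are disjoint from the $G_2$-images of their $G_1$-neighbors, together with the fact that $\rho$ acts as the identity off $T^\pi$ and preserves the partition $T^\pi = T_+^\pi \cup T_-^\pi$. Everything else is bookkeeping built on Propositions~\ref{prop:nu} and~\ref{prop:mu}.
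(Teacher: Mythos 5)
Your proof is correct and follows essentially the same approach as the paper: both construct the family $\pi' = \pi \circ \rho$ for $\rho$ acting as a permutation on $T_+^\pi$ and $T_-^\pi$ and the identity elsewhere, verify that the posterior does not decrease via Propositions~\ref{prop:nu} and~\ref{prop:mu}, and conclude with the same argmax-size argument.
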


\begin{proof}
Fix $\pi \in \cS_{n}$ and suppose that $A$, $B$, and $\boldsymbol{\sigma}$ are given. 
Let $\rho_{1}$ be any permutation of $T_{+}^{\pi}$ and 
let $\rho_{2}$ be any permutation of $T_{-}^{\pi}$. 
Construct a new permutation 
$\pi' = \pi' \left( \pi, \rho_{1}, \rho_{2} \right)$ 
as follows: 
\begin{itemize}
    \item For $i \in \left[ n \right] \setminus T^{\pi}$, let $\pi'(i) := \pi(i)$. 
    \item For $i \in T_{+}^{\pi}$, let $\pi'(i) := \pi \left( \rho_{1}(i) \right)$.  
    \item For $i \in T_{-}^{\pi}$, let $\pi'(i) := \pi \left( \rho_{2}(i) \right)$.  
\end{itemize}
Let $\cT$ be the set of permutations $\pi'$ constructed in this way. 
Since each choice of $\rho_{1}$ and $\rho_{2}$ leads to a distinct $\pi'$, 
we have that 
$\left| \cT \right| = \left| T_{+}^{\pi} \right|! \cdot \left| T_{-}^{\pi} \right|!$. 

A useful consequence of this construction is that 
$\pi' \left( V_{+} \right) = \pi \left( V_{+} \right)$ 
and $\pi' \left( V_{-} \right) = \pi \left( V_{-} \right)$. 
By Proposition \ref{prop:nu}, this implies that 
\begin{equation}\label{eq:nu_comparison}
\nu^{+} \left( \ell \left( \pi' \right) \right) = \nu^{+} \left( \ell \left( \pi \right) \right) 
\qquad 
\text{ and }
\qquad 
\nu^{-} \left( \ell \left( \pi' \right) \right) = \nu^{-} \left( \ell \left( \pi \right) \right). 
\end{equation}

Furthermore, note that if $A_{i,j} B_{\pi(i), \pi(j)} = 1$, 
then we must have $i,j \in [n] \setminus T^{\pi}$ by definition. 
The construction of $\pi'$ implies that $\pi'(i) = \pi(i)$ and $\pi'(j) = \pi(j)$ for such $i$ and $j$. 
Hence we have that 
$A_{i,j} B_{\pi'(i), \pi'(j)} = A_{i,j} B_{\pi(i), \pi(j)} = 1$ 
for such $i$ and $j$. 
By Proposition~\ref{prop:mu} we thus have that 
\begin{equation}\label{eq:mu_comparison}
\mu^{+}(\ell(\pi'))_{11} \geq \mu^{+}(\ell(\pi))_{11} 
\qquad 
\text{ and }
\qquad 
\mu^{-}(\ell(\pi'))_{11} \geq \mu^{-}(\ell(\pi))_{11}. 
\end{equation}

In light of Lemma~\ref{lemma:likelihood}, 
as well as the observations on monotonicity made after its proof, 
\eqref{eq:nu_comparison} and~\eqref{eq:mu_comparison} 
together imply, for all $n$ large enough, that 
\begin{equation}
\label{eq:pi_ordering}
\p \left( \pi_* = \pi \, \middle| \, A, B, \boldsymbol{\sigma} \right) \le \p \left( \pi_* = \pi' \, \middle| \, A, B, \boldsymbol{\sigma} \right).
\end{equation}
Now we distinguish two cases. 
First, if $\pi$ is not a maximizer of 
$\left\{ \p \left( \pi_* = \wt{\pi} \, \middle| \, A, B, \boldsymbol{\sigma} \right) \right\}_{\wt{\pi} \in \cS_{n}}$, 
then we have that 
$\p \left( \wh{\pi}_{\MAP} = \pi \, \middle| \, A, B, \boldsymbol{\sigma} \right) = 0$, 
so the claim holds trivially. 
On the other hand, if $\pi$ is a maximizer of 
$\left\{ \p \left( \pi_* = \wt{\pi} \, \middle| \, A, B, \boldsymbol{\sigma} \right) \right\}_{\wt{\pi} \in \cS_{n}}$, 
then (by~\eqref{eq:pi_ordering}) so is $\pi'$ for every $\pi' \in \cT$. 
Therefore the set 
$\argmax_{\wt{\pi} \in \cS_n} \p \left( \pi_* = \wt{\pi} \, \middle| \, A, B, \boldsymbol{\sigma} \right)$ 
has at least $\left| \cT \right|$ elements. 
Since $\wh{\pi}_{\MAP}$ picks an element of the argmax set uniformly at random, 
this implies that 
\[
\p \left( \wh{\pi}_{\MAP} = \pi \, \middle| \, A, B, \boldsymbol{\sigma} \right) \le \frac{1}{\left| \cT \right|} 
= \frac{1}{\left| T_{+}^{\pi} \right|! \cdot \left| T_{-}^{\pi} \right|!}. \qedhere
\] 
\end{proof}

Next, the following lemma establishes lower bounds for $\left|T_{+}^{\pi} \right|$ and $\left|T_{-}^{\pi} \right|$ in the case where $\pi$ is the ground truth vertex permutation. 
Before stating the result, for $\pi \in \cS_n$ we define the measure  
$\p_\pi(\cdot) := \p \left( \cdot \, \middle| \, \pi_{*} = \pi \right)$. 
Additionally, let $\E_{\pi}$ and $\mathrm{Var}_{\pi}$ denote the expectation and variance operators corresponding to the measure $\p_\pi$. 

\begin{lemma}
\label{lemma:T_sizes}
Suppose that 
$s^2 \left( \alpha + \beta \right) / 2 < 1$. 
Then there exists $\gamma = \gamma(\alpha, \beta, s) > 0$ such that 
\[
\lim\limits_{n \to \infty} \min\limits_{\pi \in \cS_n} \p_\pi \left( \left|T_{+}^{\pi} \right|, \left|T_{-}^{\pi} \right| \ge n^{\gamma} \right) = 1.
\]
\end{lemma}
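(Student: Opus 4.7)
\medskip
\noindent\textbf{Proof plan for Lemma~\ref{lemma:T_sizes}.}

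My plan is to first reduce the ``$\min_{\pi}$'' to a single instance by symmetry, then to recognize $T^{\pi}$ as the set of isolated vertices in the intersection graph of $G_{1}$ and $G_{2}'$, and finally to apply a standard first-and-second-moment argument to that subcritical SBM.

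\textbf{Step 1 (Symmetry reduction).} The key observation is that $T^{\pi}$ depends on the data only through $A$ and $B'$: indeed, when $\pi_{*} = \pi$, we have $B_{\pi(i),\pi(j)} = B_{\pi_{*}(i),\pi_{*}(j)} = B'_{i,j}$, so
\[
T^{\pi} = \left\{ i \in [n] : \forall\, j \in [n],\ A_{i,j} B'_{i,j} = 0 \right\} \quad \text{under $\p_{\pi}$}.
\]
Since the joint law of $(A,B')$ given $\boldsymbol{\sigma}$ does not depend on $\pi_{*}$, the law of $\left( |T_{+}^{\pi}|, |T_{-}^{\pi}| \right)$ under $\p_{\pi}$ is the same for every $\pi \in \cS_{n}$. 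Hence it suffices to fix any one $\pi$ (say $\pi = \mathrm{id}$) and bound the corresponding probability.

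\textbf{Step 2 (Reformulation via the intersection graph).} Let $H$ denote the intersection graph with adjacency $H_{i,j} := A_{i,j} B'_{i,j}$. As already noted in the proof sketch of Theorem~\ref{thm:graph_matching_impossibility}, $H \sim \mathrm{SBM}(n, \alpha s^{2}\log n/n, \beta s^{2} \log n/n)$. Under this reformulation, $T^{\pi}$ is precisely the set of isolated vertices of $H$, and $T_{\pm}^{\pi}$ are the isolated vertices in the two communities. Thus the lemma reduces to showing that the subcritical SBM $H$ has $\ge n^{\gamma}$ isolated vertices in each community with probability tending to $1$.

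\textbf{Step 3 (First moment).} Condition on the high-probability balanced event $\cF_{\epsilon}$. For $i \in V_{+}$, independence across pairs yields
\[
\p(i \in T^{\pi} \mid \boldsymbol{\sigma})
= \left( 1 - s^{2} \alpha \tfrac{\log n}{n} \right)^{|V_{+}|-1}
\left( 1 - s^{2}\beta \tfrac{\log n}{n} \right)^{|V_{-}|}
= n^{- s^{2}(\alpha + \beta)/2 + o(1)}
\]
on $\cF_{\epsilon}$. Setting $c := s^{2}(\alpha+\beta)/2 < 1$ and $X_{+} := |T_{+}^{\pi}|$, we obtain $\E[X_{+} \mid \boldsymbol{\sigma}] \ge \tfrac{1}{3} n^{1-c-o(1)}$ on $\cF_{\epsilon}$; the analogous bound holds for $X_{-} := |T_{-}^{\pi}|$.

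\textbf{Step 4 (Second moment and Chebyshev).} For distinct $i, j \in V_{+}$, the events $\{i \in T^{\pi}\}$ and $\{j \in T^{\pi}\}$ share only the single pair $(i,j)$, so
\[
\p(i, j \in T^{\pi} \mid \boldsymbol{\sigma})
= \frac{\p(i \in T^{\pi} \mid \boldsymbol{\sigma}) \p(j \in T^{\pi} \mid \boldsymbol{\sigma})}{1 - s^{2}\alpha \log n / n}
= (1 + o(1)) \, \p(i \in T^{\pi} \mid \boldsymbol{\sigma})\, \p(j \in T^{\pi} \mid \boldsymbol{\sigma}).
\]
Consequently $\Var(X_{+} \mid \boldsymbol{\sigma}) \le \E[X_{+} \mid \boldsymbol{\sigma}] + o(1) \cdot \E[X_{+} \mid \boldsymbol{\sigma}]^{2}$. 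Chebyshev's inequality then gives $X_{+} \ge \tfrac{1}{2} \E[X_{+} \mid \boldsymbol{\sigma}]$ with conditional probability $1 - o(1)$. Choosing $\gamma := (1-c)/2 > 0$ ensures $X_{+} \ge n^{\gamma}$ with high probability; the same holds for $X_{-}$. Combining with $\p(\cF_{\epsilon}^{c}) = o(1)$ and the symmetry reduction of Step~1 completes the proof.

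\textbf{Anticipated obstacle.} The proof is essentially a routine subcritical first/second moment calculation; the only subtle point is verifying that the reduction in Step~1 is legitimate, i.e.\ that $T^{\pi}$ truly depends on $(A,B)$ only through the law-invariant pair $(A,B')$ once we condition on $\pi_{*} = \pi$. Once this is observed, the rest reduces to a well-known computation about isolated vertices in a sparse SBM at density $s^{2}(\alpha+\beta)/2 < 1$, split across two communities using the balanced event $\cF_{\epsilon}$.
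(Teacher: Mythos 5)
Your proposal is correct and takes essentially the same route as the paper: condition on the balanced event $\cF_{\epsilon}$, observe that given $\boldsymbol{\sigma}$ and $\pi_{*}=\pi$ the events $\{i \in T^{\pi}\}$ are indicators of isolated vertices in a subcritical SBM intersection graph, and run a first/second-moment Chebyshev argument. The only cosmetic difference is that you dispatch the $\min_{\pi}$ up front by the (valid) observation that the law of $\left(|T_{+}^{\pi}|,|T_{-}^{\pi}|\right)$ under $\p_{\pi}$ is invariant in $\pi$, whereas the paper instead derives bounds for a fixed $\pi$ and then notes at the end that they hold uniformly; both are fine.
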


The proof of the lemma is based on estimating the first and second moments of $\left|T_{+}^{\pi} \right|$ and $\left|T_{-}^{\pi} \right|$ under the measure $\p_\pi$. While the proof techniques are quite standard, the proof is somewhat tedious, so we defer it to Section~\ref{subsec:T_szes}.

We are now ready to prove the impossibility result for graph matching in correlated SBMs.

\begin{proof}[Proof of Theorem~\ref{thm:graph_matching_impossibility}] 
As mentioned before, we prove a stronger claim; 
namely, we show that even if $\boldsymbol{\sigma}$ is provided as extra information, 
for any estimator $\wt{\pi} = \wt{\pi}(G_{1}, G_{2}, \boldsymbol{\sigma})$ 
we have that 
$\lim_{n \to \infty} \p \left( \wt{\pi} = \pi_{*} \right) = 0$.  
To this end, we study the MAP estimator 
$\wh{\pi}_{\MAP} = \wh{\pi}_{\MAP} \left( A, B, \boldsymbol{\sigma} \right)$ 
of $\pi_{*}$ given $A$, $B$, and $\boldsymbol{\sigma}$ (see~\eqref{eq:MAP_estimator}). 
Since the MAP estimator minimizes the probability of error (see, e.g., \cite[Chapter~4]{poor_book}), it suffices to show that 
$\lim_{n \to \infty} \p \left( \wh{\pi}_{\MAP} = \pi_{*} \right) = 0$. 

To compute/bound 
$\p \left( \wh{\pi}_{\MAP} = \pi_{*} \right)$,  
we may first condition on $\pi_{*}$ and then on $A$, $B$, and $\boldsymbol{\sigma}$. Since $\pi_{*} \in \cS_{n}$ is uniformly random, we have that 
\[
\p \left( \wh{\pi}_{\MAP} = \pi_{*} \right) 
= 
\frac{1}{n!} \sum_{\pi \in \cS_{n}} \sum_{A, B, \boldsymbol{\sigma}} \p \left( \wh{\pi}_{\MAP} = \pi \, \middle| \, A, B, \boldsymbol{\sigma}, \pi_{*} = \pi \right) \p \left( A, B, \boldsymbol{\sigma} \, \middle| \, \pi_{*} = \pi \right).
\]
Note that $\wh{\pi}_{\MAP}$ is a function of $A$, $B$, and $\boldsymbol{\sigma}$ (and perhaps additional randomness, in case the maximizer of the posterior distribution is not unique). 
Therefore 
$\p \left( \wh{\pi}_{\MAP} = \pi \, \middle| \, A, B, \boldsymbol{\sigma}, \pi_{*} = \pi \right)
= \p \left( \wh{\pi}_{\MAP} = \pi \, \middle| \, A, B, \boldsymbol{\sigma} \right)$, 
that is, we may remove the event $\left\{ \pi_{*} = \pi \right\}$ from the conditioning. Plugging this back into the display above and using the bound of Lemma~\ref{lemma:map} we obtain that 
\begin{equation}\label{eq:map_bound}
\p \left( \wh{\pi}_{\MAP} = \pi_{*} \right) 
\leq 
\frac{1}{n!} \sum_{\pi \in \cS_{n}}
\E_{\pi} \left[ \frac{1}{\left|T_{+}^{\pi} \right|! \cdot \left|T_{-}^{\pi} \right|!} \right],
\end{equation}
where the expectation is over $A$, $B$, and $\boldsymbol{\sigma}$ (recall that $T_{+}^{\pi}$ and $T_{-}^{\pi}$ are functions of $A$, $B$, and~$\boldsymbol{\sigma}$). 
Let $\gamma = \gamma \left(\alpha, \beta, s \right) > 0$ be the constant given by Lemma~\ref{lemma:T_sizes}, and 
for $\pi \in \cS_n$ define the event 
$\cA_{\pi} : = \left\{ \left|T_{+}^{\pi} \right|, \left|T_{-}^{\pi} \right| \ge n^{\gamma} \right\}$.  
By definition we have that 
\[
\E_{\pi} \left[ \frac{1}{\left|T_{+}^{\pi} \right|! \cdot \left|T_{-}^{\pi} \right|!} \right] 
\leq \frac{1}{\left( n^{\gamma} ! \right)^{2}} + \p_{\pi} \left( \cA_{\pi}^{c} \right).
\]
Plugging this into~\eqref{eq:map_bound} we thus have that 
\[
\p \left( \wh{\pi}_{\MAP} = \pi_{*} \right) 
\leq \frac{1}{\left( n^{\gamma} ! \right)^{2}} + \frac{1}{n!} \sum_{\pi \in \cS_{n}} \p_{\pi} \left( \cA_{\pi}^{c} \right) 
\leq \frac{1}{\left( n^{\gamma} ! \right)^{2}} + \max_{\pi \in \cS_{n}} \p_{\pi} \left( \cA_{\pi}^{c} \right).
\]
Both terms on the right hand side go to $0$ as $n \to \infty$; 
the latter term converging to $0$ as $n \to \infty$ is due to Lemma~\ref{lemma:T_sizes}. 
\end{proof}

\subsection{Lower bounding \texorpdfstring{$\left|T_{+}^{\pi}\right|$}{|T+pi|} and \texorpdfstring{$\left|T_{-}^{\pi}\right|$}{|T-pi|}: Proof of Lemma~\ref{lemma:T_sizes}}
\label{subsec:T_szes}

Fix $\pi \in \cS_{n}$; throughout the proof we condition on the event $\left\{ \pi_{*} = \pi \right\}$. 
Given also $\boldsymbol{\sigma}$, we have that 
\[
A_{i, j} B_{\pi(i),\pi(j)} \sim \begin{cases}
\mathrm{Bernoulli} \left( s^2 \alpha \frac{\log n}{n} \right) &\text{ if } (i,j) \in \cE^+(\boldsymbol{\sigma}) \\
\mathrm{Bernoulli} \left( s^2 \beta \frac{\log n}{n} \right) &\text{ if } (i,j) \in \cE^-(\boldsymbol{\sigma}).
\end{cases}
\]
Moreover, for fixed $i \in [n]$ the random variables 
$\left\{ A_{i,j} B_{\pi(i),\pi(j)} \right\}_{j \in [n] \setminus \{i\}}$ 
are mutually independent (given $\left\{ \pi_{*} = \pi \right\}$ and $\boldsymbol{\sigma}$). 
Hence if $i \in V_{+}$, then we have that 
\[
\p_{\pi} \left( i \in T^{\pi} \, \middle| \, \boldsymbol{\sigma} \right) 
= \left( 1 - s^{2} \alpha \frac{\log n}{n} \right)^{\left| V_{+} \right| - 1} \left( 1 - s^{2} \beta \frac{\log n}{n} \right)^{\left| V_{-} \right|}.
\]
Note that $\left| V_{+} \right|$ and $\left| V_{-} \right|$ are typically approximately $n/2$, and hence the conditional probability above is typically approximately $n^{-s^{2}(\alpha + \beta)/2}$. To make this precise, we introduce some further notation. For $\epsilon \in (0,1)$ define 
\begin{align*}
    \delta &:= 1 - \left( 1 + \epsilon / 2 \right)^{2} s^{2} \left( \alpha + \beta \right) / 2, \\
    \lambda &:= 1 - \left( 1 - \epsilon \right) s^{2} \left( \alpha + \beta \right) / 2.
\end{align*} 
In the following we fix $\epsilon \in (0,1)$ such that 
\begin{equation}\label{eq:params}
\delta > 0 
\qquad 
\text{ and }
\qquad 
\lambda > 0 
\qquad 
\text{ and }
\qquad 
\lambda < 2 \delta.
\end{equation}
Such an $\epsilon \in (0,1)$ exists due to the assumption that $s^{2} \left( \alpha + \beta \right) / 2 < 1$. 
Recall that on the event $\cF_{\epsilon}$ we have that 
$\left| V_{+} \right|, \left| V_{-} \right| \leq (1+ \epsilon /2) n/2$. 
Thus if $\boldsymbol{\sigma}$ is such that the event $\cF_\epsilon$ holds, 
then 
\begin{align*}
\log \p_{\pi} \left( i \in T^{\pi} \, \middle| \, \boldsymbol{\sigma} \right) 
&\ge \left( 1 + \frac{\epsilon}{2} \right) \frac{n}{2} \left( \log \left( 1 - s^{2} \alpha \frac{\log n}{n} \right) + \log \left( 1 - s^{2} \beta \frac{\log n}{n} \right) \right)  \\
& \ge \left( 1 + \frac{\epsilon}{2} \right)^2 \frac{n}{2} \left( - s^2 (\alpha + \beta) \frac{\log n}{n} \right)  
= \left( \delta - 1 \right) \log n,
\end{align*}
where the second inequality holds for all $n$ large enough, 
since $\log(1-x) \geq -(1+\epsilon/2)x$ for all $x>0$ small enough. 
Thus, on the event $\cF_{\epsilon}$ we have that 
$\p_{\pi} \left( i \in T^{\pi} \, \middle| \, \boldsymbol{\sigma} \right) 
\geq n^{\delta - 1}$ 
for all $n$ large enough. 
By linearity of expectation this gives a lower bound on the (conditional) expectation of $\left| T_{+}^{\pi} \right|$: 
if $\boldsymbol{\sigma}$ is such that $\cF_\epsilon$ holds, then for all $n$ large enough we have that 
\begin{equation}
\label{eq:ET_lower_bound}
\E_{\pi} \left[ \left|T_{+}^{\pi} \right| \, \middle| \, \boldsymbol{\sigma} \right] 
\geq \left| V_{+} \right| n^{\delta - 1} 
\geq \frac{1-\epsilon/2}{2} n^{\delta} 
\geq \frac{1}{4} n^{\delta}.
\end{equation}

To establish a probabilistic lower bound for $\left| T_{+}^{\pi} \right|$, 
we proceed by bounding its (conditional) variance. 
For $i \in [n]$ let $X_{i} := \mathbf{1} \left( i \in T_{+}^{\pi} \right)$ 
be the indicator variable that $i \in T_{+}^{\pi}$. 
We then have that 
\begin{equation}\label{eq:var_decomposition}
\Var_{\pi} \left( \left| T_{+}^{\pi} \right| \, \middle| \, \boldsymbol{\sigma} \right) 
= \Var_{\pi} \left( \sum_{i \in V_{+}} X_{i} \, \middle| \, \boldsymbol{\sigma} \right) 
= \sum_{i \in V_{+}} \Var_{\pi} \left( X_i \, \middle| \, \boldsymbol{\sigma} \right) + \sum_{i,j \in V_{+} : i \neq j} \Cov_{\pi} \left( X_{i}, X_{j} \, \middle| \,  \boldsymbol{\sigma} \right).
\end{equation}
For the variance terms on the right hand side, we use the bound
\[
\Var_{\pi} \left( X_i \, \middle| \, \boldsymbol{\sigma} \right) 
\leq \p_{\pi} \left( i \in T^{\pi} \, \middle| \, \boldsymbol{\sigma} \right) 
\leq \exp \left( - s^{2} \left( \alpha \left( \left| V_{+} \right| - 1 \right) + \beta \left| V_{-} \right| \right) \frac{\log n}{n} \right).
\]
If $\boldsymbol{\sigma}$ is such that $\cF_\epsilon$ holds, then using the bounds $\left| V_{+} \right| - 1 \geq (1-\epsilon) n/2$ and $\left| V_{-} \right| \geq (1-\epsilon) n/2$ we thus have that 
\begin{equation}\label{eq:var_bound}
\Var_{\pi} \left( X_i \, \middle| \, \boldsymbol{\sigma} \right) 
\leq n^{\lambda - 1}. 
\end{equation}
The covariance terms can be computed as 
\begin{multline*}
\Cov_{\pi} \left( X_{i}, X_{j} \, \middle| \, \boldsymbol{\sigma} \right) 
= \E_{\pi} \left[ X_{i} X_{j} \, \middle| \, \boldsymbol{\sigma} \right] 
- \E_{\pi} \left[ X_{i} \, \middle| \, \boldsymbol{\sigma} \right] \E_{\pi} \left[ X_{j} \, \middle| \, \boldsymbol{\sigma} \right] \\
\begin{aligned}
&= \left( 1 - s^{2} \alpha \frac{\log n}{n} \right)^{2 \left| V_{+} \right| - 3} 
\left( 1 - s^{2} \beta \frac{\log n}{n} \right)^{2 \left| V_{-} \right|} 
- \left( 1 - s^{2} \alpha \frac{\log n}{n} \right)^{2 \left| V_{+} \right| - 2} 
\left( 1 - s^{2} \beta \frac{\log n}{n} \right)^{2 \left| V_{-} \right|}\\
&= s^{2} \alpha \frac{\log n}{n} \left( 1 - s^{2} \alpha \frac{\log n}{n} \right)^{2 \left| V_{+} \right| - 3} 
\left( 1 - s^{2} \beta \frac{\log n}{n} \right)^{2 \left| V_{-} \right|} \\
&\leq s^{2} \alpha \frac{\log n}{n} \exp \left( - s^{2} \left( \alpha \left( 2 \left| V_{+} \right| - 3 \right) + \beta \left( 2 \left| V_{-} \right| \right) \right) \frac{\log n}{n} \right).
\end{aligned}
\end{multline*}
If $\boldsymbol{\sigma}$ is such that $\cF_\epsilon$ holds, then using the bounds $2\left| V_{+} \right| - 3 \geq (1-\epsilon) n$ and $2 \left| V_{-} \right| \geq (1-\epsilon) n$ we thus have that 
\begin{equation}\label{eq:cov_bound}
\Cov_{\pi} \left( X_{i}, X_{j} \, \middle| \, \boldsymbol{\sigma} \right) 
\leq 
\left( s^{2} \alpha \log(n) \right) n^{-1 - \left( 1 - \epsilon \right) s^{2} \left( \alpha + \beta \right)} 
= \left( s^{2} \alpha \log(n) \right) n^{2\lambda - 3}.
\end{equation}
Plugging~\eqref{eq:var_bound} and~\eqref{eq:cov_bound} back into~\eqref{eq:var_decomposition}, we have that 
\[
\Var_{\pi} \left( \left| T_{+}^{\pi} \right| \, \middle| \, \boldsymbol{\sigma} \right) 
\leq 
n \cdot n^{\lambda - 1} + n^{2} \cdot \left( s^{2} \alpha \log(n) \right) n^{2\lambda - 3} 
= n^{\lambda} + \left( s^{2} \alpha \log(n) \right) n^{2\lambda - 1}.
\]
whenever $\boldsymbol{\sigma}$ is such that $\cF_\epsilon$ holds. 
Since $\lambda < 1$, we have that $\lambda > 2 \lambda - 1$, and so the display above implies that 
\begin{equation}\label{eq:var_estimate}
\Var_{\pi} \left( \left| T_{+}^{\pi} \right| \, \middle| \, \boldsymbol{\sigma} \right) 
\leq 2 n^{\lambda}
\end{equation}
for all $n$ large enough, whenever $\boldsymbol{\sigma}$ is such that $\cF_\epsilon$ holds.

Next, we use Chebyshev's inequality to turn the first and second moment estimates into a probabilistic lower bound for $\left| T_{+}^{\pi} \right|$. 
If $\boldsymbol{\sigma}$ is such that $\cF_\epsilon$ holds, then, 
by~\eqref{eq:ET_lower_bound}, for all $n$ large enough such that $n^{\delta/2} \leq n^{\delta}/8$, we have that 
\[
\p_{\pi} \left( \left| T_{+}^{\pi} \right| \leq n^{\delta / 2} \, \middle| \, \boldsymbol{\sigma} \right) 
\leq 
\p_{\pi} \left( \left| \left| T_{+}^{\pi} \right| - \E_{\pi} \left[ \left| T_{+}^{\pi} \right| \, \middle| \, \boldsymbol{\sigma} \right] \right| \geq n^{\delta} / 8 \, \middle| \, \boldsymbol{\sigma} \right). 
\]
Thus by Chebyshev's inequality and~\eqref{eq:var_estimate} we have that 
\[
\p_{\pi} \left( \left| T_{+}^{\pi} \right| \leq n^{\delta / 2} \, \middle| \, \boldsymbol{\sigma} \right) 
\leq 
64 n^{-2\delta} \Var_{\pi} \left( \left| T_{+}^{\pi} \right| \, \middle| \, \boldsymbol{\sigma} \right) 
\leq 128 n^{\lambda - 2 \delta} 
\]
for all $n$ large enough, whenever $\boldsymbol{\sigma}$ is such that $\cF_\epsilon$ holds. 
Recall from~\eqref{eq:params} that $\lambda - 2\delta < 0$, so this bound decays to $0$ as $n \to \infty$. 

To remove the conditioning on $\boldsymbol{\sigma}$, we can write 
\[
\p_{\pi} \left( \left| T_{+}^{\pi} \right| \geq n^{\delta / 2} \right) 
\geq 
\E \left[ \p_{\pi} \left( \left| T_{+}^{\pi} \right| \geq n^{\delta / 2} \, \middle| \, \boldsymbol{\sigma} \right) \mathbf{1} \left( \cF_{\epsilon} \right) \right] 
\geq 
\left( 1 - 128 n^{\lambda - 2 \delta} \right) \p \left( \cF_{\epsilon} \right).
\]
Note in particular that this lower bound holds \emph{uniformly} over all $\pi \in \cS_{n}$. 
Hence, since $\p \left( \cF_{\epsilon} \right) \to 1$ as $n \to \infty$, we have that 
\[
\lim_{n \to \infty} \min_{\pi \in \cS_{n}} \p_{\pi} \left( \left| T_{+}^{\pi} \right| \geq n^{\delta / 2} \right) = 1.
\]

Finally, the same arguments also hold for $\left| T_{-}^{\pi} \right|$ by symmetry, 
so the conclusion follows by a union bound.

\section{Impossibility of community recovery from correlated SBMs}
\label{sec:impossibility_community_recovery}


\begin{proof}[Proof of Theorem~\ref{thm:community_recovery_impossibility}] 
The key idea is to reduce the problem to that of exact community recovery in the (classical) single-graph SBM setting. 
Specifically, as observed in the proof of Theorem~\ref{thm:community_recovery}, 
the union graph 
$H_{*} := G_{1} \lor_{\pi_{*}} G_{2}$ 
satisfies 
\[
H_{*} \sim \mathrm{SBM} \left( n, \alpha (1 - (1 - s)^2) \frac{\log n}{n} , \beta (1 - (1 - s)^2 ) \frac{\log n}{n} \right), 
\]
and from $H_{*}$ it is possible to simulate $G_{1}$ and $G_{2}$. 
However, under the condition~\eqref{eq:union_graph_impossibility}, exact community recovery is impossible from an SBM with such parameters~\cite{abbe2016exact, mossel2016consistency,abbe2015community,Abbe_survey}. 

To make the argument formal, 
suppose by way of contradiction that there exists an estimator 
$\widetilde{\boldsymbol{\sigma}} = \widetilde{\boldsymbol{\sigma}}(G_1, G_2)$ 
such that 
\begin{equation}\label{eq:contradiction}
\limsup_{n \to \infty} \p \left( \overlap \left( \wt{\boldsymbol{\sigma}}(G_{1}, G_{2}), \boldsymbol{\sigma} \right) = 1 \right) > 0.
\end{equation}

Now let $H$ be a graph on the vertex set $[n]$ satisfying 
\[
H \sim \mathrm{SBM} \left( n, \alpha (1 - (1 - s)^2) \frac{\log n}{n} , \beta (1 - (1 - s)^2 ) \frac{\log n}{n} \right), 
\]
and let $\boldsymbol{\sigma}_{H}$ denote the underlying community labels of $H$. 
Given $H$, we now construct two edge-subsampled graphs $H_{1}$ and $H_{2}'$ as follows. 
First, define the parameters 
\[
(r_{01}, r_{10}, r_{11}) 
:= 
\left( \frac{s(1-s)}{1-(1-s)^{2}}, \frac{s(1-s)}{1-(1-s)^{2}}, \frac{s^{2}}{1-(1-s)^{2}} \right)
\]
and note that $r_{01} + r_{10} + r_{11} = 1$, so this triple defines a probability distribution.  
Now for every vertex pair $(i,j)$ independently: 
\begin{itemize}
    \item if $(i,j)$ is not an edge in $H$, then it is not an edge in $H_{1}$ and it is not an edge in $H_{2}'$; 
    \item if $(i,j)$ is an edge in $H$, then 
    \begin{itemize}
        \item with probability $r_{10}$, the pair $(i,j)$ is an edge in $H_{1}$ but not an edge in $H_{2}'$;
        \item with probability $r_{01}$, the pair $(i,j)$ is not an edge in $H_{1}$ but it is an edge in $H_{2}'$; and
        \item with probability $r_{11}$, the pair $(i,j)$ is an edge in both $H_{1}$ and $H_{2}'$. 
    \end{itemize}
\end{itemize}
The key observation is that, by construction, 
$(H_{1}, H_{2}', \boldsymbol{\sigma}_{H})$ has the same distribution as $(G_{1}, G_{2}', \boldsymbol{\sigma})$. 
Now let $\pi \in \cS_{n}$ be a uniformly random permutation which is independent of everything else. 
Finally, we generate $H_{2}$ by relabeling the vertices of $H_{2}'$ according to $\pi$ (i.e., vertex $i$ in $H_{2}'$ is relabeled to $\pi(i)$ in $H_{2}$). 
Again by construction, 
$(H_{1}, H_{2}, \boldsymbol{\sigma}_{H})$ has the same distribution as $(G_{1}, G_{2}, \boldsymbol{\sigma})$. 
In particular, 
$\overlap \left( \wt{\boldsymbol{\sigma}}(H_{1}, H_{2}), \boldsymbol{\sigma}_{H} \right)$ 
and 
$\overlap \left( \wt{\boldsymbol{\sigma}}(G_{1}, G_{2}), \boldsymbol{\sigma} \right)$ 
have the same distribution, 
and so 
\[
\p \left( \overlap \left( \wt{\boldsymbol{\sigma}}(H_{1}, H_{2}), \boldsymbol{\sigma}_{H} \right) = 1 \right) 
= 
\p \left( \overlap \left( \wt{\boldsymbol{\sigma}}(G_{1}, G_{2}), \boldsymbol{\sigma} \right) = 1 \right).
\]
Combining this with~\eqref{eq:contradiction}, we have that 
\begin{equation}\label{eq:H_recovery}
\limsup_{n \to \infty} \p \left( \overlap \left( \wt{\boldsymbol{\sigma}}(H_{1}, H_{2}), \boldsymbol{\sigma}_{H} \right) = 1 \right) > 0.
\end{equation}
However, it is known~\cite{abbe2016exact, mossel2016consistency,abbe2015community,Abbe_survey} that if~\eqref{eq:union_graph_impossibility} holds, 
then for every estimator $\boldsymbol{\sigma}' = \boldsymbol{\sigma}'(H)$ 
(including randomized estimators)
we have that 
\begin{equation}\label{eq:H_nonrecovery}
\lim_{n \to \infty} \p \left( \overlap \left( \boldsymbol{\sigma}'(H), \boldsymbol{\sigma}_{H} \right) = 1 \right) = 0. 
\end{equation}
Since $(H_{1}, H_{2})$ was constructed from $H$ using only additional randomness, $\wt{\boldsymbol{\sigma}}(H_{1}, H_{2})$ can be thought of as a randomized estimator of $\boldsymbol{\sigma}_{H}$ which takes $H$ as input. 
Therefore~\eqref{eq:H_recovery} and~\eqref{eq:H_nonrecovery} are in direct contradiction. 
Thus~\eqref{eq:contradiction} does not hold, which proves the claim. 
\end{proof}

\section{Proofs for many correlated SBMs}
\label{sec:proofs_many_corr_SBMs}

In this section we prove our results that concern $K \geq 3$ correlated SBMs, 
namely Theorems~\ref{thm:community_recovery_K} and~\ref{thm:K_community_recovery_impossibility}. 
These proofs are analogous to the proofs of Theorems~\ref{thm:community_recovery} and~\ref{thm:community_recovery_impossibility}, extending them to the setting of $K \geq 3$ correlated SBMs.

\begin{proof}[Proof of Theorem~\ref{thm:community_recovery_K}] 
Given permutations $\pi^{2}, \ldots, \pi^{K} \in \cS_{n}$, 
we define $G_{1} \lor_{\pi^{2}} G_{2} \ldots \lor_{\pi^{K}} G_{K}$, 
the \emph{union graph with respect to $\pi^{2}, \ldots, \pi^{K}$}, 
as follows: 
for distinct $i$ and $j$, 
the pair $(i,j)$ is an edge in $G_{1} \lor_{\pi^{2}} G_{2} \ldots \lor_{\pi^{K}} G_{K}$ 
if and only if 
$(i,j)$ is an edge in $G_{1}$ 
or $\left( \pi^{k}(i), \pi^{k}(j) \right)$ is an edge in $G_{k}$ for some $k \in \left\{ 2, \ldots, K \right\}$. 
In particular, let 
$H_{*} := G_{1} \lor_{\pi_{*}^{2}} G_{2} \ldots \lor_{\pi_{*}^{K}} G_{K}$. 
By construction, $H_{*}$ is the subgraph of the parent graph~$G$ consisting of exactly the edges that are in $G_{1}$ or in $G_{k}'$ for some $k \in \{2, \ldots, K\}$.  
Thus we have that 
\[
H_{*} \sim \SBM \left( n, \alpha \left( 1 - \left( 1 - s \right)^{K} \right) \frac{\log n}{n}, \beta \left( 1 - \left( 1 - s \right)^{K} \right) \frac{\log n}{n} \right). 
\]

The algorithm we study first computes, for every $k \in \{ 2, \ldots, K \}$, 
the permutation $\wh{\pi}^{k} := \wh{\pi} \left( G_{1}, G_{k} \right)$ according to Theorem~\ref{thm:graph_matching}. 
We then pick any community recovery algorithm that is known to succeed until the information-theoretic limit, and run it on 
$\wh{H} := G_{1} \lor_{\wh{\pi}^{2}} G_{2} \ldots \lor_{\wh{\pi}^{K}} G_{K}$; 
we denote the result of this algorithm by 
$\wh{\sigma} ( \wh{H} )$. 
We can then write 
\begin{align*}
\p ( \overlap(\wh{\boldsymbol{\sigma}}(\wh{H}), \boldsymbol{\sigma}) \neq 1 ) 
&\le \p ( \{ \overlap(\wh{\boldsymbol{\sigma}}(\wh{H}), \boldsymbol{\sigma}) \neq 1 \} \cap \{ \wh{H} = H_* \} ) + \p(\wh{H} \neq H_* ) \\
&\le \p( \overlap(\wh{\boldsymbol{\sigma}}(H_*), \boldsymbol{\sigma}) \neq 1) + \sum_{k=2}^{K} \p\left( \wh{\pi}^{k} \neq \pi_\star^{k} \right),
\end{align*}
where, to obtain the inequality in the second line, 
we have used that $\wh{\boldsymbol{\sigma}}(\wh{H}) = \wh{\boldsymbol{\sigma}}(H_*)$ on the event $\{\wh{H} = H_* \}$, 
and that $\wh{H} \neq H_{*}$ implies that $\wh{\pi}^{k} \neq \pi_{*}^{k}$ for some $k \in \{2, \ldots, K\}$. 
Since exact community recovery on $H_{*}$ is possible when~\eqref{eq:K_community_recovery} holds~\cite{abbe2016exact, mossel2016consistency,abbe2015community,Abbe_survey}, 
we know that 
$\p(\overlap(\wh{\boldsymbol{\sigma}}(H_*), \boldsymbol{\sigma}) \neq 1) \to 0$ as $n \to \infty$. 
In light of Theorem~\ref{thm:graph_matching} we also have, for every $k \in \{2, \ldots, K\}$, that $\p( \wh{\pi}^{k} \neq \pi_{*}^{k}) \to 0$ when $s^{2}(\alpha + \beta)/2 > 1$, concluding the proof. 
\end{proof}

\begin{proof}[Proof of Theorem \ref{thm:K_community_recovery_impossibility}] 
Suppose, by way of contradiction, that there exists 
an estimator 
$\widetilde{\boldsymbol{\sigma}} 
= \widetilde{\boldsymbol{\sigma}}\left(G_{1}, G_{2}, \ldots, G_{K} \right)$ 
such that 
\begin{equation}\label{eq:contradiction_K}
\limsup_{n \to \infty} \p \left( \overlap \left( \wt{\boldsymbol{\sigma}}(G_{1}, G_{2}, \ldots, G_{K}), \boldsymbol{\sigma} \right) = 1 \right) > 0.
\end{equation}

Now let $H$ be a graph on the vertex set $[n]$ satisfying 
\[
H \sim \mathrm{SBM} \left( n, \alpha \left(1 - \left(1 - s\right)^{K} \right) \frac{\log n}{n} , \beta \left(1 - \left(1 - s\right)^{K} \right) \frac{\log n}{n} \right), 
\]
and let $\boldsymbol{\sigma}_{H}$ denote the underlying community labels of $H$. 
Given $H$, we now construct $K$ edge-subsampled graphs, $H_{1}, H_{2}', \ldots, H_{K}'$, as follows. 
First, for $x \in \{0,1\}^{K}$ let $|x| := \sum_{k=1}^{K} x_{k}$. 
For every $x \in \{0,1\}^{K}$ let $r_{x} := s^{|x|} (1-s)^{K-|x|} / \left( 1 - \left( 1 - s \right)^{K} \right)$, and note that 
$\sum_{x \in \{0,1\}^{K} \setminus 0^{K}} r_{x} = 1$, 
so $\boldsymbol{r} := \left\{ r_{x} \right\}_{x \in \{0,1\}^{K} \setminus 0^{K}}$ 
defines a probability distribution. 
Now for every vertex pair $(i,j)$ independently: 
\begin{itemize}
    \item if $(i,j)$ is not an edge in $H$, 
    then it is not an edge in any of $H_{1}, H_{2}', \ldots, H_{K}'$; 
    \item if $(i,j)$ is an edge in $H$, then draw $x \in \{0,1\}^{K} \setminus 0^{K}$ from the distribution $\boldsymbol{r}$. Then $(i,j)$ is an edge in $H_{1}$ if and only if $x_{1} = 1$, and for every $k \in \{2, \ldots, K\}$, the pair $(i,j)$ is an edge in $H_{k}'$ if and only if $x_{k} = 1$. 
\end{itemize}
The key observation is that, by construction, 
$\left( H_{1}, H_{2}', \ldots, H_{K}', \boldsymbol{\sigma}_{H} \right)$ 
has the same distribution as 
$\left( G_{1}, G_{2}', \ldots, G_{K}', \boldsymbol{\sigma} \right)$. 
Now let $\pi^{2}, \ldots, \pi^{K} \in \cS_{n}$ 
be i.i.d.\ uniformly random permutations 
which are independent of everything else. 
Finally, for every $k \in \{2, \ldots, K\}$, 
we generate $H_{k}$ by relabeling the vertices of $H_{k}'$ according to $\pi^{k}$ 
(i.e., vertex $i$ in $H_{k}'$ is relabeled to $\pi^{k}(i)$ in $H_{k}$). 
Again by construction, 
$\left( H_{1}, H_{2}, \ldots, H_{K}, \boldsymbol{\sigma}_{H} \right)$ 
has the same distribution as 
$\left( G_{1}, G_{2}, \ldots, G_{K}, \boldsymbol{\sigma} \right)$. 
In particular, 
$\overlap \left( \wt{\boldsymbol{\sigma}}(H_{1}, H_{2}, \ldots, H_{K}), \boldsymbol{\sigma}_{H} \right)$ 
and 
$\overlap \left( \wt{\boldsymbol{\sigma}}(G_{1}, G_{2}, \ldots, G_{K}), \boldsymbol{\sigma} \right)$ 
have the same distribution, 
and so 
\[
\p \left( \overlap \left( \wt{\boldsymbol{\sigma}}(H_{1}, H_{2}, \ldots, H_{K}), \boldsymbol{\sigma}_{H} \right) = 1 \right) 
= 
\p \left( \overlap \left( \wt{\boldsymbol{\sigma}}(G_{1}, G_{2}, \ldots, G_{K}), \boldsymbol{\sigma} \right) = 1 \right).
\]
Combining this with~\eqref{eq:contradiction_K}, we have that 
\begin{equation}\label{eq:H_recovery_K}
\limsup_{n \to \infty} \p \left( \overlap \left( \wt{\boldsymbol{\sigma}}(H_{1}, H_{2}, \ldots, H_{K}), \boldsymbol{\sigma}_{H} \right) = 1 \right) > 0.
\end{equation}
However, it is known~\cite{abbe2016exact, mossel2016consistency,abbe2015community,Abbe_survey} that if~\eqref{eq:K_community_impossibility} holds, 
then for every estimator $\boldsymbol{\sigma}' = \boldsymbol{\sigma}'(H)$ 
(including randomized estimators)
we have that 
\begin{equation}\label{eq:H_nonrecovery_K}
\lim_{n \to \infty} \p \left( \overlap \left( \boldsymbol{\sigma}'(H), \boldsymbol{\sigma}_{H} \right) = 1 \right) = 0. 
\end{equation}
Since $(H_{1}, H_{2}, \ldots, H_{K})$ was constructed from $H$ using only additional randomness, the estimator $\wt{\boldsymbol{\sigma}}(H_{1}, H_{2}, \ldots, H_{K})$ is a randomized estimator of $\boldsymbol{\sigma}_{H}$ which takes $H$ as input. 
Therefore~\eqref{eq:H_recovery_K} and~\eqref{eq:H_nonrecovery_K} are in direct contradiction. 
Thus~\eqref{eq:contradiction_K} does not hold, proving the claim. 
\end{proof}


\section*{Acknowledgements}

We thank Jasmine Nirody for help with figures. 


{\small
\bibliographystyle{abbrv}
\bibliography{references}
}




\end{document}